\definecolor{labelkey}{rgb}{0,0.08,0.45}
\definecolor{rekey}{rgb}{0,0.6,0.0}
\definecolor{Brown}{rgb}{0.45,0.0,0.05}
\DeclareMathOperator{\weakly}{\rightharpoonup^{\mathrm{w}}}
\newcommand{\w}{\ensuremath{\operatorname{w}}}
\newcommand{\scal}[2]{\langle{{#1},{#2}}\rangle}
\newcommand{\RR}{\ensuremath{\mathbb R}}
\newcommand{\RX}{\ensuremath{\,\left]-\infty,+\infty\right]}}
\newcommand{\NN}{\ensuremath{\mathbb N}}
\newcommand{\menge}[2]{\big\{{#1} \mid {#2}\big\}}
\newcommand{\To}{\ensuremath{\rightrightarrows}}
\newcommand{\dom}{\ensuremath{\operatorname{dom}}}
\newcommand{\gra}{\ensuremath{\operatorname{gra}}}
\newcommand{\epi}{\ensuremath{\operatorname{epi}}}
\newcommand{\inte}{\ensuremath{\operatorname{int}}}
\newcommand{\ran}{\ensuremath{\operatorname{ran}}}
\newcommand{\Id}{\ensuremath{\operatorname{Id}}}
\newcommand{\tr}{\ensuremath{\mathbf{tr}}}
\newcommand{\argmin}{\ensuremath{\operatorname{argmin}}}
\newtheorem{theorem}{Theorem}[section]
\newtheorem{lemma}[theorem]{Lemma}
\newtheorem{fact}[theorem]{Fact}
\newtheorem{corollary}[theorem]{Corollary}
\newtheorem{proposition}[theorem]{Proposition}
\newtheorem{definition}[theorem]{Definition}
\theoremstyle{plain}{\theorembodyfont{\rmfamily}
}
\theoremstyle{plain}{\theorembodyfont{\rmfamily}
}
\theoremstyle{plain}{\theorembodyfont{\rmfamily}
}
\theoremstyle{plain}{\theorembodyfont{\rmfamily}
\newtheorem{example}[theorem]{Example}}
\theoremstyle{plain}{\theorembodyfont{\rmfamily}
\newtheorem{remark}[theorem]{Remark}}
\theoremstyle{plain}{\theorembodyfont{\rmfamily}
}
\newcommand{\qede}{\hspace*{\fill}$\Diamond$\medskip}
\begin{document}


\title{\sffamily{Legendre-type integrands and convex integral functions}}

\author{
Jonathan M. Borwein\thanks{CARMA, University of Newcastle,
 Newcastle, New South Wales 2308, Australia. E-mail:
\texttt{jonathan.borwein@newcastle.edu.au}. Distinguished Professor King Abdulaziz University, Jeddah.}\;
  and Liangjin\
Yao\thanks{CARMA, University of Newcastle,
 Newcastle, New South Wales 2308, Australia.
E-mail:  \texttt{liangjin.yao@newcastle.edu.au}.}}

\date{August 24,  2012}
\maketitle

\begin{abstract} \noindent
In this paper, we study the properties of integral functionals induced on $L^1_E (S,\mu)$ by closed convex functions on a Euclidean space $E$.
We give  sufficient conditions  for such integral functions to be strongly rotund (well-posed). We show that
in this generality functions such as the Boltzmann-Shannon entropy and the Fermi-Dirac entropy  are strongly rotund. We also study convergence in measure and give various limiting counterexample.
\end{abstract}

\noindent {\bfseries 2010 Mathematics Subject Classification:}\\
{Primary  46B20,  34H05;
Secondary 47H05, 47N10, 90C25}

\noindent {\bfseries Keywords:}
Legendre function,
monotone operator,
set-valued operator,
strongly rotund function,
Kadec-Klee property,
subdifferential operator,
Visintin theorem,
Vitali's covering theorem,
weak convergence,
weak compactness,
convergence in measure.

\section{Introduction}

We assume throughout that
$X$ is a real Banach space with norm $\|\cdot\|$,
that $X^*$ is the continuous dual of $X$,
 and
that $X$ and $X^*$ are paired by $\scal{\cdot}{\cdot}$.
The \emph{open  unit ball} and the \emph{closed unit ball} in $X$ is denoted
respectively
by $U_X :=\{x\mid \|x\|<1\}$ and
$B_X:=
\menge{x\in X}{\|x\|\leq1}$, $U(x,\delta):=x+\delta U_X$ and
$B(x,\delta):=x+\delta B_X$ (where $\delta\geq0$ and $x\in X$) and $\NN=\{1,2,3,\ldots\}$.
 We also assume that $d\in\NN$ and reserve $E$ for
a Euclidean space $\RR^d$ with the induced norm $\|\cdot\|$.

Throughout the  paper, we also  assume that
 $S$ is an arbitrary non-trivial set and that $(S,\mu)$ is a complete \emph{finite} measure space (with
nonzero measure $\mu$).
The Banach space  $L^1_E (S,\mu)$
with $\|\cdot\|_1$ stands for the  space of
 all (equivalence classes of) measurable
functions $f: S\rightarrow\RR^n$ such that $\int_S\|f(s)\|{\rm
d}\mu(s)< +\infty$. The norm $\|\cdot\|_1$ on $L^1_E (S,\mu)$ and
$\langle\cdot,\cdot\rangle $ on $L^1_E (S,\mu)\times \big(L^1_E
(S,\mu)\big)^*\big(=L^{\infty}_E (S,\mu)\big)$ are respectively
defined by
\begin{align*}
\|f\|_1:=\int_S\|f(s)\|{\rm d}\mu(s)\quad\text{and} \quad\langle
f,g\rangle:= \int_S\langle f(s), g(s)\rangle {\rm d}\mu(s),\quad
\forall f\in L^1_E (S,\mu), g\in L^{\infty}_E (S,\mu).
\end{align*}
The norm on $L^{\infty}_E (S,\mu)$ is $\|\cdot\|_{\infty}$.

Let $A\colon X\To X^*$
be a \emph{set-valued operator} (also known as a relation,
 point-to-set mapping or multifunction)
from $X$ to $X^*$, i.e., for every $x\in X$, $Ax\subseteq X^*$,
and let
$\gra A := \menge{(x,x^*)\in X\times X^*}{x^*\in Ax}$ be
the \emph{graph} of $A$. The \emph{domain} of $A$
  is $\dom A:= \menge{x\in X}{Ax\neq\varnothing}$ and
$\ran A:=A(X)$ is the \emph{range} of $A$.

Recall that $A$ is
\emph{monotone} if
\begin{equation}
\scal{x-y}{x^*-y^*}\geq 0,\quad \forall (x,x^*)\in \gra A\;
\forall (y,y^*)\in\gra A,
\end{equation}
and \emph{maximally monotone} if $A$ is monotone and $A$ has
 no proper monotone extension
(in the sense of graph inclusion).

We now recall  some additional standard notations \cite{BorVan}.
We denote by $\longrightarrow$
and $\weakly$ respectively,
the norm convergence and weak convergence
of  sequences.
Given a subset $C$ of $X$,
$\inte C$ is the \emph{interior} of $C$ and
$\overline{C}$ is   the
\emph{norm closure} of $C$.
Let $(C_n)_{n\in\NN}$ be  a sequence of subsets in $X$. We define $\overline{\lim}^{\w}C_n$ by
$\overline{\lim}^{\w}C_n:=\big\{x\in X\mid
 \exists x_{n_k}\in C_{n_k}\,\text{with}\, x_{n_k}\weakly x\big\}$.
Let $f\colon X\to \RX$ and $\lambda\in\RR$. Then
$\dom f:= f^{-1}(\RR)$ is the \emph{domain} of $f$.
We say $f$ is proper if $\dom f\neq\varnothing$.
The \emph{lower level sets} of $f$ are
the sets  $\{x\in X\mid f(x)\leq \lambda\}$.
The \emph{epigraph} of $f$ is $\epi f := \menge{(x,r)\in
X\times\RR}{f(x)\leq r}$.
Let $C$ be convex, we say $x\in C$ is an \emph{extreme point} of $C$
if $\lambda u+(1-\lambda)v\neq x,
\forall u, v\in C\backslash\{x\}, \forall \lambda\in\left[0,1\right]$.
If $x\in\argmin f$, then $f(x)=\inf\{f(y)\mid y\in X\}$.
Let $f$ be proper. The \emph{subdifferential} of
$f$ is defined by
   \begin{align*}\partial f\colon X\To X^*\colon
   x\mapsto \big\{x^*\in X^*\mid(\forall y\in
X)\; \scal{y-x}{x^*} + f(x)\leq f(y)\big\}.
\end{align*}
We say $f$ has the \emph{Kadec} or \emph{Kadec--Klee} property if the following implication
\begin{align*}x_n\weakly x\in\dom  f, f(x_n)\longrightarrow f(x)\quad \Rightarrow\quad x_n\longrightarrow x.
\end{align*}
holds.

As in \cite{BorLew1} we say that  $f$  is \emph{strongly rotund} if $f$ is strictly convex on its domain, $f$ has weakly compact lower level sets, and $f$ has the Kadec property. This is in effect a well-posedness condition, see \cite{Luc}.

Let $\phi:E\rightarrow\RX$ be proper lower semicontinuous and convex.
We define $I_{\phi}: L^1_E (S,\mu)\rightarrow\RX$ by
\begin{align*}
x\mapsto\int_S \phi (x(s)) {\rm d}\mu(s).
\end{align*}

The integral function $I_{\phi}$ has attracted much interest, see, e.g., \cite{Rock68PJM, Rock71PJM,Rock7604,Vin,BorLew3,BorLew2,BorVan, TeVa, BorVan2,CsFr} and the references given therein. In the one-dimensional case with Lebesgue measure, Borwein and Lewis  presented some characterizations for the integral  function $I_{\phi}$ to be strongly rotund (See \cite{BorLew1}.).
 In this paper, we extend their work to an arbitrary Euclidean space.

\subsection{Organization of the paper}
The remainder of this paper is organized as follows.
In Section~\ref{s:aux}, we collect preliminary results for future reference
and  the
reader's convenience. In Section~\ref{s:voi}, we present a sufficient condition for the integral  function $I_{\phi}$ to be strongly rotund in our main result (Theorem~\ref{Tsge2:1}). Some examples and applications are provided in Section~\ref{s:ExA}, in which we show that
the Boltzmann-Shannon entropy and the Fermi-Dirac entropy defined on
$L^1_E (S,\mu)$ both are strongly rotund.   In Section \ref{sec:burg} we present an enlightening illustration of failure of strong rotundity.
In Section~\ref{ApVisin}, we apply a lovely  result due to Visintin to both strengthen Theorem~\ref{Tsge2:1} and to shed light on the Kadec property. In the final Section~\ref{sec:meas} we  turn to the role of convergence in measure.

\section{Preliminary results}\label{s:aux}
We first introduce Vitali's covering theorem.

\begin{fact}[Vitali]\label{VitaliCo}\emph{(See \cite[Theorem~1, page~27]{EvanGar}.)}
 Let $(x_i)_{i\in I}$ be in $E$ and  $(\delta_i)_{i\in I}$ be in $\left]0,+\infty\right[$
  such that $\sup_{i\in I}\delta_i<+\infty$.
  Then there exists a countable subset $\Gamma$ of $I$ such that $B(x_{\alpha},\delta_{\alpha})\cap B(x_{\beta},\delta_{\beta})=\varnothing$ (for every $\alpha,\beta\in\Gamma$ with $\alpha\neq\beta$) and
  \begin{align*}
  \bigcup_{i\in I} B(x_i,\delta_i)\subseteq \bigcup_{i\in\Gamma} B(x_i,5\delta_i).
 \end{align*}

 \end{fact}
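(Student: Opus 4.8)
The plan is to run the classical greedy ``$5r$''-selection, organizing the index set into dyadic layers according to the size of the radii, so that whenever a ball meets a previously selected one the selected ball is at least half as large; this is exactly what forces the dilation factor $5$.

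Write $R:=\sup_{i\in I}\delta_i<+\infty$ and stratify $I$ into the layers
\[
I_n:=\menge{i\in I}{R/2^n<\delta_i\leq R/2^{n-1}},\qquad n\in\NN,
\]
so that $I=\bigcup_{n\in\NN}I_n$. I would then build $\Gamma$ inductively: let $\Gamma_1$ be a maximal (with respect to inclusion) subfamily of $\menge{B(x_i,\delta_i)}{i\in I_1}$ consisting of pairwise disjoint balls, whose existence follows from Zorn's lemma; and, having chosen $\Gamma_1,\ldots,\Gamma_{n-1}$, let $\Gamma_n$ be a maximal pairwise-disjoint subfamily of those $B(x_i,\delta_i)$ with $i\in I_n$ that are moreover disjoint from every ball already selected in $\Gamma_1\cup\cdots\cup\Gamma_{n-1}$. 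Setting $\Gamma:=\bigcup_{n\in\NN}\Gamma_n$ makes the selected balls pairwise disjoint by construction. Countability comes for free: $E$ is separable, and a pairwise disjoint family of balls of positive radius is countable, since the nonempty interiors contain distinct points of a fixed countable dense set.

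For the covering inclusion, fix $i\in I$ and suppose $i\in I_n$. By maximality of $\Gamma_n$ (or because $B(x_i,\delta_i)$ is itself selected) the ball $B(x_i,\delta_i)$ meets some selected ball $B(x_j,\delta_j)$ with $j$ lying in a layer $I_m$, $m\leq n$. Then $\delta_j>R/2^m\geq R/2^n$ while $\delta_i\leq R/2^{n-1}=2(R/2^n)$, so $\delta_i<2\delta_j$. Choosing $z\in B(x_i,\delta_i)\cap B(x_j,\delta_j)$ and any $y\in B(x_i,\delta_i)$, the triangle inequality gives
\[
\|y-x_j\|\leq\|y-x_i\|+\|x_i-z\|+\|z-x_j\|\leq 2\delta_i+\delta_j<5\delta_j,
\]
whence $y\in B(x_j,5\delta_j)$ and therefore $B(x_i,\delta_i)\subseteq B(x_j,5\delta_j)$. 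This yields $\bigcup_{i\in I}B(x_i,\delta_i)\subseteq\bigcup_{j\in\Gamma}B(x_j,5\delta_j)$, as required.

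The only genuinely delicate point is the bookkeeping in the dyadic stratification: one must be sure the layer thresholds are chosen so that a ball in $I_n$ can only be ``blocked'' by a ball at least half its radius, which is precisely what converts the crude estimate $2\delta_i+\delta_j$ into $5\delta_j$. Everything else — the Zorn's lemma selection and the separability argument for countability — is routine.
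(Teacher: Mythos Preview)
Your argument is correct and is the standard greedy $5r$-selection proof of the Vitali covering lemma. Note, however, that the paper does not supply its own proof of this statement: it is recorded as a \emph{Fact} with a citation to Evans--Gariepy and then invoked as a black box in the subsequent corollary. Your dyadic-layer organization is essentially the proof found in that reference, so there is no alternative approach to compare against.
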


\begin{corollary}\label{CoVitaliCo}
 Let $U$ be an open subset of $E$ and $\delta>0$. Then there exist a sequence $(x_n)_{n\in\NN}$ in $U$ and a sequence  $(\delta_n)_{n\in\NN}$ in $\left]0,\delta\right]$
  such that $B(x_{n},\frac{\delta_{n}}{5})\cap B(x_{m},\tfrac{\delta_{m}}{5})=\varnothing$ (for every $n,m\in\NN$ with $n\neq m$) and
 \begin{align}
\bigcup_{n\in\NN} B(x_n,\delta_n)=U.\label{CoVitaliCo:Ea1}
 \end{align}

\end{corollary}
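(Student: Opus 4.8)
The plan is to apply Vitali's covering theorem (Fact~\ref{VitaliCo}) to a family of balls whose radii are \emph{one fifth} of the admissible ones, so that the dilation factor $5$ in Vitali's conclusion exactly restores the large balls appearing in \eqref{CoVitaliCo:Ea1}.

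First I would, for each $x\in U$, select a radius
$\delta_x\in\left]0,\delta\right]$ with $B(x,\delta_x)\subseteq U$; the natural choice is $\delta_x:=\min\big\{\delta,\tfrac{1}{2}\di(x,E\setminus U)\big\}$ when $U\neq E$, and $\delta_x:=\delta$ when $U=E$. Since $U$ is open and $x\in U$, one has $\di(x,E\setminus U)>0$, whence $\delta_x>0$; moreover $\delta_x\leq\delta$, and because $\delta_x<\di(x,E\setminus U)$ (or $U=E$) the closed ball $B(x,\delta_x)$ lies inside $U$. In particular $\sup_{x\in U}\tfrac{\delta_x}{5}\leq\tfrac{\delta}{5}<+\infty$, so the radii $\tfrac{\delta_x}{5}$ are bounded.

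Next I would invoke Fact~\ref{VitaliCo} with index set $I=U$, points $x$, and radii $\tfrac{\delta_x}{5}$. This produces a countable $\Gamma\subseteq U$ for which the balls $\big(B(x,\tfrac{\delta_x}{5})\big)_{x\in\Gamma}$ are pairwise disjoint and
\[
\bigcup_{x\in U} B\big(x,\tfrac{\delta_x}{5}\big)\subseteq \bigcup_{x\in\Gamma} B\big(x,5\cdot\tfrac{\delta_x}{5}\big)=\bigcup_{x\in\Gamma} B(x,\delta_x).
\]
Enumerating $\Gamma=\{x_n\}$ and setting $\delta_n:=\delta_{x_n}$ gives the disjointness of the small balls $B(x_n,\tfrac{\delta_n}{5})$ for free. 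For the equality \eqref{CoVitaliCo:Ea1}, the inclusion $\bigcup_n B(x_n,\delta_n)\subseteq U$ follows from $B(x_n,\delta_n)\subseteq U$ (built into the choice of $\delta_x$), while the reverse inclusion follows since every $x\in U$ satisfies $x\in B(x,\tfrac{\delta_x}{5})$, so $U\subseteq\bigcup_{x\in U}B(x,\tfrac{\delta_x}{5})\subseteq\bigcup_n B(x_n,\delta_n)$.

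There is no serious obstacle here: the content is the bookkeeping that pairing radii $\tfrac{\delta_x}{5}$ with the factor $5$ returns $\delta_x$, together with the containment $B(x,\delta_x)\subseteq U$ that keeps the cover inside $U$. The one point deserving a remark is that $\Gamma$ may a priori be finite; but a finite union of closed balls is closed and bounded, so if it equalled the nonempty open set $U$ then $U$ would be clopen in the connected space $E$, forcing $U=E$ and contradicting boundedness. Hence $\Gamma$ is countably infinite and $(x_n)_{n\in\NN}$, $(\delta_n)_{n\in\NN}$ are genuine sequences.
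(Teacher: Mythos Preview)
Your proof is correct and follows essentially the same route as the paper: choose radii so that the dilated balls stay in $U$, apply Fact~\ref{VitaliCo} to the one-fifth radii, and then observe that the Vitali factor $5$ restores the covering balls. Your treatment is slightly more explicit in two places---you give a formula for $\delta_x$, and your argument that $\Gamma$ must be infinite (clopen plus boundedness) fills in a detail the paper leaves implicit---but the strategy is identical.
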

\begin{proof}
Let $x\in U$. There exists $\beta_x \in\left]0,\tfrac{\delta}{5}\right]$ such that
\begin{align}B(x,5\beta_x)\subseteq U.\label{CoVitaliCo:E1}\end{align}
Then we have
\begin{align}
U\subseteq \bigcup_{x\in U} B(x,\beta_x).
\end{align}
By Fact~\ref{VitaliCo}, there exist a countable set $I$ and $(x_i)_{i\in I}$ in $U$ such that $B(x_i,\beta_{x_i})\cap B(x_j,\beta_{x_j})=\varnothing$ (for every $i,j\in I$ with $i\neq j$) and
\begin{align}
U\subseteq \bigcup_{x\in U} B(x,\beta_x)\subseteq \bigcup_{i\in I} B(x_i,5\beta_{x_i}).\label{CoVitaliCo:E2}
\end{align}
Then by \eqref{CoVitaliCo:E1}, $\bigcup_{i\in I} B(x_i,5\beta_{x_i})\subseteq U$. Hence by \eqref{CoVitaliCo:E2},
\begin{align}
U= \bigcup_{i\in I} B(x_i,5\beta_{x_i}).\label{CoVitaliCo:E3}
\end{align}
Note that $I$ cannot be a finite set. Otherwise, $\bigcup_{i\in I} B(x_i,5\beta_{x_i})$ is closed, which contradicts \eqref{CoVitaliCo:E3}.
Set $\alpha_i :=5\beta_{x_i}, \forall i\in I$.
Thus \eqref{CoVitaliCo:E3} implies that  \eqref{CoVitaliCo:Ea1} holds.
\end{proof}

\begin{fact}[Dunford]\label{Dundcom:1}\emph{(See \cite[Theorem~4, page~104]{DiestelJr}.)}
 Let $D$ be a weakly compact subset of $L^1_E(S,\mu)$. Then for every $\varepsilon>0$,
 there exists $\delta>0$ such that
  \begin{align*}
 \int_C\big\|y(s)\big\| {\rm d}\mu(s)\leq\varepsilon,\quad\forall \mu(C)\leq\delta,\,\forall y\in D.
 \end{align*}

 \end{fact}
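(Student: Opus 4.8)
The statement is the ``weak compactness $\Rightarrow$ uniform integrability'' half of the Dunford--Pettis theorem, specialized to the finite-dimensional target $E=\RR^d$. Since it is quoted from \cite{DiestelJr} one only needs to reconstruct a proof, and the plan is to argue by contradiction, turning weak convergence into \emph{setwise} convergence of the associated vector measures and then invoking the Vitali--Hahn--Saks theorem; finite-dimensionality of $E$ enters only at the very end.

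First I would negate the conclusion: there would be some $\varepsilon_0>0$ so that for each $\nnn$ one can find $y_n\in D$ and a measurable set $C_n$ with $\mu(C_n)\leq\tfrac1n$ yet $\int_{C_n}\|y_n(s)\|\,{\rm d}\mu(s)>\varepsilon_0$. Since $D$ is weakly compact, the Eberlein--\v{S}mulian theorem makes it weakly sequentially compact, so after passing to a subsequence and relabelling I may assume $y_n\weakly y$ for some $y\in L^1_E(S,\mu)$, while still $\mu(C_n)\to 0$ and $\int_{C_n}\|y_n\|\,{\rm d}\mu>\varepsilon_0$. Next I would exploit the pairing: for a fixed measurable set $A$ and a fixed basis vector $e_i$ of $E$, the function $s\mapsto\mathbf{1}_A(s)\,e_i$ lies in $L^\infty_E(S,\mu)$, so $\scal{y_n}{\mathbf{1}_A e_i}=\int_A\scal{y_n(s)}{e_i}\,{\rm d}\mu(s)$ converges to the corresponding integral for $y$. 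Writing $\nu_n^{(i)}(A):=\int_A\scal{y_n(s)}{e_i}\,{\rm d}\mu(s)$, each sequence of scalar measures $(\nu_n^{(i)})_\nnn$ is therefore setwise convergent, with every $\nu_n^{(i)}$ absolutely continuous with respect to $\mu$. The Vitali--Hahn--Saks theorem then yields that $(\nu_n^{(i)})_\nnn$ is \emph{uniformly} absolutely continuous, i.e. $\sup_n|\nu_n^{(i)}|(C)\to 0$ as $\mu(C)\to 0$, for each $i\in\{1,\dots,d\}$.

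Finally I would transfer this back to the integrals of norms. The $E$-valued measure $A\mapsto\int_A y_n\,{\rm d}\mu$ has total variation $\int_{(\cdot)}\|y_n(s)\|\,{\rm d}\mu(s)$, and because $E$ is finite dimensional a Hahn-decomposition estimate on each coordinate gives $\int_C\|y_n(s)\|\,{\rm d}\mu(s)\leq\sum_{i=1}^d|\nu_n^{(i)}|(C)\leq 2\sum_{i=1}^d\sup_{A\subseteq C}|\nu_n^{(i)}(A)|$, whose right-hand side tends to $0$ uniformly in $n$ as $\mu(C)\to 0$ by the previous step. Choosing $\delta'$ so that $\sup_n\int_C\|y_n\|\,{\rm d}\mu\leq\tfrac{\varepsilon_0}{2}$ whenever $\mu(C)\leq\delta'$, and then taking $n$ so large that $\mu(C_n)\leq\delta'$, contradicts $\int_{C_n}\|y_n\|\,{\rm d}\mu>\varepsilon_0$.

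The technical heart---and the step I expect to be the main obstacle---is the Vitali--Hahn--Saks theorem itself, whose proof rests on a Baire-category (gliding-hump) argument on the complete metric space of measurable sets under the Fr\'echet--Nikodym pseudometric $d(A,B)=\mu(A\triangle B)$; alternatively one can bypass it by directly disjointifying the sets $C_n$ and building a norm-one element of $L^\infty_E(S,\mu)$ that separates the weakly null sequence $y_n-y$ from $0$. The only place where finite-dimensionality of $E$ is genuinely used is the passage from uniform absolute continuity of the $d$ coordinate measures to that of $\int_C\|y_n\|\,{\rm d}\mu$.
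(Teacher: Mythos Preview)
The paper does not supply its own proof of this statement: it is recorded as a Fact with a bare citation to \cite[Theorem~4, page~104]{DiestelJr}, so there is nothing to compare your argument against line by line. That said, your reconstruction is correct and is essentially the classical proof one finds in the cited reference: contradict uniform integrability, extract a weakly convergent subsequence via Eberlein--\v{S}mulian, test against indicators to obtain setwise convergence of the coordinate measures, apply Vitali--Hahn--Saks for uniform $\mu$-continuity, and use $\|v\|\le\sum_{i=1}^d|\scal{v}{e_i}|$ together with $|\nu_n^{(i)}|(C)=\int_C|\scal{y_n(s)}{e_i}|\,{\rm d}\mu(s)$ to close the contradiction. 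The only mild over-statement is in your last paragraph: finite-dimensionality of $E$ is a convenience here rather than a genuine necessity, since for an arbitrary Banach target the same conclusion holds (one works with the scalar measures $A\mapsto\int_A\scal{y_n(s)}{e^*}\,{\rm d}\mu(s)$ for $e^*$ in the dual ball and uses that $L^1_E(S,\mu)^*=L^\infty_{E^*}(S,\mu)$ in this setting); but for the purposes of this paper, where $E=\RR^d$ throughout, your coordinate argument is exactly what is needed.
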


\begin{fact}[Rockafellar]\emph{(See \cite[Theorem~1]{Rock69} or  \cite[Theorem~2.28]{ph}.)}
\label{pheps:11}Let $A:X\To X^*$ be  monotone with
 $\inte\dom A\neq\varnothing$.
Then $A$ is locally bounded at $x\in\inte\dom A$,
that is, there exist $\delta>0$ and $K>0$ such that
\begin{align*}\sup_{y^*\in Ay}\|y^*\|\leq K,
\quad \forall y\in (x+\delta B_X)\cap \dom A.
\end{align*}
\end{fact}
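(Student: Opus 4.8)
The plan is to prove this classical theorem of Rockafellar by a Baire-category argument applied to an auxiliary convex function built from the graph of $A$. First I would reduce to the case $x=0$ by translating both $A$ and its domain, so assume $0\in\inte\dom A$ and fix $r>0$ with $B(0,r)\subseteq\dom A$. The central device is the function
\[
f(z):=\sup\big\{\scal{z-y}{y^*}\mid (y,y^*)\in\gra A,\ \|y\|\leq r\big\},
\]
which, being a pointwise supremum of continuous affine functions of $z$, is automatically convex and lower semicontinuous on $X$; it is also proper, since $0\in\dom A$ supplies at least one admissible pair and hence keeps $f$ above $-\infty$.

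The first substantive step is to show that $f$ is finite throughout the open ball $B(0,r)$, and here monotonicity does the work. Given $z$ with $\|z\|<r$ we may choose $z^*\in Az$, and for every admissible $(y,y^*)$ monotonicity yields $\scal{z-y}{y^*}\leq\scal{z-y}{z^*}\leq 2r\|z^*\|$; taking the supremum gives $f(z)\leq 2r\|z^*\|<+\infty$. Thus $f$ is a proper lower semicontinuous convex function that is finite on the open set $B(0,r)\subseteq\inte\dom f$.

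Next I would invoke the standard consequence of the Baire category theorem in a Banach space: a proper lower semicontinuous convex function is locally bounded above, hence continuous, on the interior of its domain. Concretely, the closed level sets $\{f\leq n\}$ cover $\overline{B(0,r/2)}$, so one of them has nonempty interior; convexity then propagates an upper bound to a neighborhood of $0$, producing $\delta\in\left]0,r\right]$ and $M>0$ with $f(z)\leq M$ whenever $\|z\|\leq\delta$. I expect this to be the crux of the argument, since it is exactly where completeness of $X$ enters.

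Finally I would convert the bound on $f$ into the desired bound on $\|y^*\|$. For any $(y,y^*)\in\gra A$ with $\|y\|\leq\delta/2$ and any $z$ with $\|z\|\leq\delta$, the definition of $f$ gives $\scal{z-y}{y^*}\leq f(z)\leq M$, whence $\scal{z}{y^*}\leq M+\scal{y}{y^*}$. Taking the supremum over $\|z\|\leq\delta$ yields $\delta\|y^*\|\leq M+\|y\|\,\|y^*\|\leq M+\tfrac{\delta}{2}\|y^*\|$, so that $\|y^*\|\leq 2M/\delta=:K$. Translating back to the original point $x$ and using radius $\delta/2$ gives the asserted local boundedness.
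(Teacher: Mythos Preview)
Your proof is correct and is precisely the classical argument; the paper itself does not supply a proof of this Fact but merely cites \cite[Theorem~1]{Rock69} and \cite[Theorem~2.28]{ph}, and your write-up follows the latter essentially verbatim (auxiliary convex supremum function, Baire category to get local upper boundedness, then the algebraic extraction of the bound on $\|y^*\|$). There is nothing to compare beyond noting that what you have written is the proof behind the citation.
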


\begin{fact}\emph{(See  \cite[Theorem~2.2.1]{Zalinescu}.)}
\label{LoweSC:1}Let $f:X\rightarrow \RX$ be  proper convex. Then $f$ is lower semicontinuous
if and only if $f$ is weak--lower semicontinuous.
\end{fact}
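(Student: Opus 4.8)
The plan is to reduce the statement about the function to a statement about its epigraph, and then to invoke the fact that for convex sets norm-closedness and weak-closedness coincide. First I would record the standard equivalence valid for \emph{any} topology $\tau$ on $X$: the function $f$ is $\tau$-lower semicontinuous if and only if $\epi f$ is closed in the product $(X,\tau)\times\RR$ (equivalently, if and only if every lower level set $\{x\in X\mid f(x)\leq\lambda\}$ is $\tau$-closed). Applying this once with $\tau$ the norm topology and once with $\tau$ the weak topology reduces the claim to the assertion that $\epi f$ is norm-closed if and only if $\epi f$ is weakly closed.

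Since $f$ is convex, $\epi f$ is a convex subset of the Banach space $X\times\RR$. One direction of the reduced claim is immediate and needs no convexity: the weak topology is coarser than the norm topology, so every weakly closed set is norm-closed. Hence weak lower semicontinuity of $f$ always implies its (norm) lower semicontinuity.

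For the converse I would invoke Mazur's theorem, a consequence of the Hahn--Banach separation theorem: in a locally convex space a convex set has the same closure in the original topology and in the weak topology, so that a convex set is norm-closed if and only if it is weakly closed. Applying this to the convex set $\epi f\subseteq X\times\RR$, norm-closedness of $\epi f$ yields weak-closedness of $\epi f$, and therefore (norm) lower semicontinuity of $f$ yields weak lower semicontinuity.

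The only point requiring care, and the step I would treat most explicitly, is the identification of the topologies on the product $X\times\RR$. Because $(X\times\RR)^*=X^*\times\RR$, the weak topology of the Banach space $X\times\RR$ coincides with the product of the weak topology on $X$ and the usual topology on $\RR$; this is exactly what guarantees that weak-closedness of $\epi f$ is equivalent to weak lower semicontinuity of $f$. Once this identification is in place no genuine obstacle remains, since the entire argument rests on the separation theorem applied to the convex epigraph (properness enters only to make $\epi f$ nonempty and is otherwise inessential).
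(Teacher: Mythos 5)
Your proposal is correct, and it is essentially the canonical argument: the paper states this result as a Fact with no proof of its own, citing Z\u{a}linescu's Theorem~2.2.1, whose proof proceeds exactly as you do --- pass to the epigraph (or lower level sets), note it is convex, and apply the Mazur/Hahn--Banach separation principle that a convex subset of a locally convex space is norm-closed if and only if it is weakly closed, together with the identification of the weak topology on $X\times\RR$ with the product of the weak topology on $X$ and the usual topology on $\RR$. Your explicit attention to that last identification, and your remark that properness is inessential here, are both accurate.
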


\begin{fact}[Borwein and Lewis]\emph{(See  \cite[Lemma~2.8]{BorLew1}.)}
\label{BorLewNorm:1}Let $f:X\rightarrow \RX$ be  proper lower semicontinuous and convex. Suppose that $f^*$ is Fr\'{e}chet differentiable on $\dom\partial f^*$. Assume that $(x_n)_{n\in\NN}$ and $x\in\dom \partial f$ are  such that
$x_n\weakly x, f(x_n)\longrightarrow f(x)$. Then $x_n\longrightarrow x$.
\end{fact}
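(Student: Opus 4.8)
The plan is to recast the hypotheses as a statement about a perturbed minimization problem and then to exploit the Fréchet differentiability of $f^*$ as a well-posedness (strong minimum) property. Since $x\in\dom\partial f$, I would first fix some $x^*\in\partial f(x)$. Because $f$ is proper lower semicontinuous and convex we have $f=f^{**}$, so the conjugacy equivalence $x^*\in\partial f(x)\iff x\in\partial f^*(x^*)$ holds; in particular $x^*\in\dom\partial f^*$, whence the hypothesis guarantees that $f^*$ is Fréchet differentiable at $x^*$. Since a point of Fréchet differentiability has a singleton subdifferential, the membership $x\in\partial f^*(x^*)$ forces $x=\nabla f^*(x^*)$, which pins down the Fréchet derivative as the target point.

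Next I would track the Fenchel--Young gap $\Delta_n:=f(x_n)+f^*(x^*)-\scal{x_n}{x^*}\geq0$. The subgradient relation $x^*\in\partial f(x)$ yields the equality $f(x)+f^*(x^*)=\scal{x}{x^*}$, while $f(x_n)\longrightarrow f(x)$ together with the weak convergence $x_n\weakly x$ (which gives $\scal{x_n}{x^*}\longrightarrow\scal{x}{x^*}$ against the fixed functional $x^*\in X^*$) shows $\Delta_n\longrightarrow0$. Thus $(x_n)_{n\in\NN}$ is a minimizing sequence for $y\mapsto f(y)-\scal{y}{x^*}$, whose infimum $-f^*(x^*)$ is attained at $x$.

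The crux is to convert $\Delta_n\longrightarrow0$ into norm convergence using the local Fréchet estimate. Fix $\varepsilon>0$ and choose $\rho>0$ so that $f^*(x^*+z^*)-f^*(x^*)-\scal{x}{z^*}\leq\varepsilon\|z^*\|$ whenever $\|z^*\|\leq\rho$. For arbitrary $z^*\in X^*$, the Fenchel--Young inequality applied at $x_n$ gives $f^*(x^*+z^*)\geq\scal{x_n}{x^*+z^*}-f(x_n)=f^*(x^*)-\Delta_n+\scal{x_n}{z^*}$, and hence $\scal{x_n-x}{z^*}-\Delta_n\leq f^*(x^*+z^*)-f^*(x^*)-\scal{x}{z^*}$. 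Combining this with the Fréchet bound and taking the supremum over $\|z^*\|\leq\rho$, so that the left-hand side becomes $\rho\|x_n-x\|$ via the isometric embedding of $X$ into $X^{**}$, yields $\rho\|x_n-x\|\leq\varepsilon\rho+\Delta_n$, that is, $\|x_n-x\|\leq\varepsilon+\Delta_n/\rho$. Letting $n\to\infty$ and then $\varepsilon\downarrow0$ gives $x_n\longrightarrow x$.

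I expect the main obstacle to lie in the bookkeeping of this final estimate: correctly identifying the Fréchet derivative with $x$ and recognizing that the supremum of $\scal{x_n-x}{z^*}$ over the ball of radius $\rho$ reproduces $\rho\|x_n-x\|$. Everything else---the existence of $x^*$, the conjugacy equivalence, and the limit $\Delta_n\longrightarrow0$---is routine once $f=f^{**}$ is invoked.
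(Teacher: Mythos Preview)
Your argument is correct: selecting $x^*\in\partial f(x)$, reducing to the Fenchel--Young gap $\Delta_n\to 0$, and then leveraging the Fr\'echet estimate for $f^*$ at $x^*$ to squeeze out $\|x_n-x\|\le\varepsilon+\Delta_n/\rho$ is exactly the standard route to this result. Note that the present paper does not supply its own proof of this statement; it merely cites it as a fact from \cite[Lemma~2.8]{BorLew1}, so there is nothing to compare against beyond observing that your proof is precisely the classical Asplund--Rockafellar ``Fr\'echet differentiability $\Leftrightarrow$ strong minimum'' argument underlying the original lemma.
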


\begin{definition}\emph{(See \cite{BBC1}.)}
Let $f:X\rightarrow\RX$ be  proper  lower semicontinuous and convex. We say
\begin{enumerate}
 \item $f$ is \emph{essentially smooth}
 if $\partial f$ is locally bounded and single-valued on its
domain.
\item  $f$ is \emph{essentially strictly convex}
if $(\partial f)^{-1}$ is locally bounded on its domain
and $f$ is strictly convex on every convex subset of $\dom\partial f$.
\item $f$ is \emph{Legendre} if $f$ is essentially smooth
and essentially strictly convex.

 \end{enumerate}
\end{definition}

\begin{fact}[Rockafellar]\label{phiEmth}
\emph{(See \cite[Theorem~26.3]{Rock70CA}.)}
Let $\phi:E\rightarrow\RX$ be proper lower semicontinuous and convex.
 Then ${\phi}$ is
essentially strictly convex if and only if $\phi^*$ is essentially smooth. \qede
\end{fact}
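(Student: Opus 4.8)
The plan is to reduce everything to the conjugate subgradient inversion formula, namely the standard fact that for a proper lower semicontinuous convex $\phi$ on the Euclidean space $E$ one has $(x,x^*)\in\gra\partial\phi$ if and only if $(x^*,x)\in\gra\partial\phi^*$, so that $\partial\phi^*=(\partial\phi)^{-1}$. With this in hand the two local boundedness requirements match up for free: the set $\dom\partial\phi^*$ equals $\ran\partial\phi=\dom(\partial\phi)^{-1}$, and $\partial\phi^*=(\partial\phi)^{-1}$ is locally bounded on its domain precisely when $(\partial\phi)^{-1}$ is. Thus the substantive content of the equivalence is the single remaining pair of conditions, and I would isolate it as the claim that $\phi$ is strictly convex on every convex subset of $\dom\partial\phi$ if and only if $\partial\phi^*$ is single-valued on its domain.

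For the direction that essential strict convexity of $\phi$ implies essential smoothness of $\phi^*$, suppose $x_1,x_2\in\partial\phi^*(x^*)$, equivalently $x^*\in\partial\phi(x_1)\cap\partial\phi(x_2)$. Adding the two subgradient inequalities for $\phi$ at $x_1$ and $x_2$ forces them to hold with equality, whence $\phi$ is affine along the segment $[x_1,x_2]$ with $\phi(x_1+s(x_2-x_1))=\phi(x_1)+s\scal{x^*}{x_2-x_1}$. A short computation then shows $x^*\in\partial\phi(x_1+s(x_2-x_1))$ for every $s\in[0,1]$, so the whole segment lies in $\dom\partial\phi$ and is a convex set on which $\phi$ is affine. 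Essential strict convexity forbids this unless $x_1=x_2$, which gives single-valuedness of $\partial\phi^*$; local boundedness was already secured above.

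For the converse, assume $\phi^*$ is essentially smooth. If $\phi$ failed to be strictly convex on some convex $C\subseteq\dom\partial\phi$, there would be distinct $x_1,x_2\in C$ and $t\in\zeroun$ with $\phi$ affine on $[x_1,x_2]$; write $x_t:=(1-t)x_1+tx_2$ and pick $x^*\in\partial\phi(x_t)$, which exists because $x_t\in C\subseteq\dom\partial\phi$. Here the same local lemma runs in reverse: using the affine behaviour of $\phi$ on the segment together with the subgradient inequality at the interior point $x_t$, I would verify that $x^*\in\partial\phi(x_1)$ and $x^*\in\partial\phi(x_2)$, hence $x_1,x_2\in\partial\phi^*(x^*)$ with $x_1\neq x_2$, contradicting single-valuedness. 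So $\phi$ must be strictly convex on every convex subset of $\dom\partial\phi$, and the inherited local boundedness of $(\partial\phi)^{-1}=\partial\phi^*$ completes essential strict convexity.

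The main obstacle, and really the only nonroutine point, is the local lemma linking affineness of $\phi$ on a segment to the existence of a common subgradient at its endpoints, which is used in both directions. The careful part is checking that a subgradient at an interior point of an affine segment remains a subgradient at the endpoints; this follows by expanding the subgradient inequality at the interior point and exploiting the equality case forced by affineness, but it must be carried out cleanly. Finite dimensionality of $E$ keeps all domains and subdifferentials well behaved, so no weak-compactness or closure subtleties intrude, and the inversion formula $\partial\phi^*=(\partial\phi)^{-1}$ handles the remaining bookkeeping.
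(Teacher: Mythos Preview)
Your argument is correct. The paper itself does not prove this fact: it is stated as a citation to Rockafellar's \emph{Convex Analysis}, Theorem~26.3, with no proof supplied (hence the \verb|\qede| marker). So there is no ``paper's own proof'' to compare against; your self-contained proof stands in for the referenced result.

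Substantively, your reduction via $\partial\phi^*=(\partial\phi)^{-1}$ is exactly the classical route. The local-boundedness conditions in the two definitions are literally the same condition on the same operator, as you observe, and the remaining equivalence---single-valuedness of $\partial\phi^*$ versus strict convexity of $\phi$ on convex subsets of $\dom\partial\phi$---is handled cleanly by your ``segment lemma'' in both directions. The one place worth a line of extra care is the converse direction: when you pick $x^*\in\partial\phi(x_t)$ and push it to the endpoints, you implicitly use that the subgradient inequality at $x_t$ applied to $y=x_1$ and $y=x_2$ forces $\phi(x_2)-\phi(x_1)=\scal{x^*}{x_2-x_1}$ (equality, not just inequality), after which the transfer to $\partial\phi(x_1)$ and $\partial\phi(x_2)$ is immediate. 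You allude to this but do not write it out; making that step explicit would remove any doubt.
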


\begin{fact}\emph{(See \cite[Theorem~5.6(ii)\&(iii) and Theorem~5.11(ii)]{BBC1}.)}
\label{legenFa:1}
Let $f:X\rightarrow\RX$ be  proper  lower semicontinuous and convex. Then the following hold.
\begin{enumerate}
 \item \label{legenFa:1E1}$f$ is essentially smooth if and only if
 $\inte\dom f\neq\varnothing$ and $\partial f$ is single-valued,
if and only if
 $\inte\dom f=\dom\partial f$ and $\partial f$ is single-valued.

\item \label{legenFa:1E3} Suppose that $X=E$.
Then $f$ is essentially strictly convex if and only if
$f$ is strictly convex on every convex subset of $\dom\partial f$.
 \end{enumerate}
\end{fact}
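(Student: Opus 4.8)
The plan is to treat the two parts separately, each time reducing to Rockafellar's local boundedness theorem (Fact~\ref{pheps:11}) together with two standard facts about a proper lower semicontinuous convex $f$: that $\partial f$ is maximally monotone, and that $\dom\partial f$ is nonempty and dense in $\dom f$ (Br\o ndsted--Rockafellar). Throughout I write $N_{\dom f}(x)$ for the normal cone of the convex set $\dom f$ at $x$, and I name the three conditions in part~\ref{legenFa:1E1}: let $(a)$ be essential smoothness, $(b)$ be ``$\inte\dom f\neq\varnothing$ and $\partial f$ single-valued,'' and $(c)$ be ``$\inte\dom f=\dom\partial f$ and $\partial f$ single-valued.'' I would establish the cycle $(a)\Rightarrow(b)\Rightarrow(c)\Rightarrow(a)$.

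The geometric heart is $(b)\Rightarrow(c)$. Since a proper lower semicontinuous convex function is continuous, hence subdifferentiable, on the interior of its domain, the inclusion $\inte\dom f\subseteq\dom\partial f$ holds automatically; it remains to exclude any $x\in\dom\partial f\setminus\inte\dom f$. For such an $x$, separating $x$ from the open convex set $\inte\dom f$ by Hahn--Banach produces a nonzero $x^*\in N_{\dom f}(x)$. Because $\partial f(x)+N_{\dom f}(x)\subseteq\partial f(x)$, the set $\partial f(x)$ then contains the whole ray $\{x_0^*+tx^*\mid t\geq0\}$ for any $x_0^*\in\partial f(x)$, contradicting single-valuedness; hence $\dom\partial f=\inte\dom f$. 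For $(c)\Rightarrow(a)$, the domain $\dom\partial f=\inte\dom f$ is open and nonempty, so Fact~\ref{pheps:11} applied to the monotone operator $\partial f$ yields local boundedness at every point of $\dom\partial f$, giving essential smoothness.

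The delicate implication is $(a)\Rightarrow(b)$, which I expect to be the main obstacle. Single-valuedness transfers verbatim, so the problem is to deduce $\inte\dom f\neq\varnothing$ from the mere local boundedness of $\partial f$ on its nonempty domain. In finite dimensions this is immediate, but in a general Banach space it rests on the \emph{converse} local boundedness theorem for maximally monotone operators: if $\partial f$ is locally bounded at some $x_0\in\dom\partial f$, then $x_0\in\inte\overline{\conv\dom\partial f}=\inte\overline{\dom f}$, and convexity of $\dom f$ then forces $\inte\dom f\neq\varnothing$. This converse (available in Phelps~\cite{ph}) is exactly the nontrivial input that Fact~\ref{pheps:11} does not provide, and supplying or citing it carefully is the crux of part~\ref{legenFa:1E1}.

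For part~\ref{legenFa:1E3}, with $X=E$, the forward direction is immediate, since essential strict convexity includes strict convexity on every convex subset of $\dom\partial f$ by definition. For the converse I would pass to the conjugate. Assuming $f$ is strictly convex on every convex subset of $\dom\partial f$, Rockafellar's duality theorem (the result underlying Fact~\ref{phiEmth}) makes $f^*$ essentially smooth, whereupon part~\ref{legenFa:1E1} applied to the proper lower semicontinuous convex function $f^*$ gives $\dom\partial f^*=\inte\dom f^*$, in particular $\inte\dom f^*\neq\varnothing$. Since $E$ is finite dimensional, Fact~\ref{pheps:11} then renders $\partial f^*$ locally bounded at every point of $\inte\dom f^*=\dom\partial f^*$; via the identity $(\partial f)^{-1}=\partial f^*$ this says precisely that $(\partial f)^{-1}$ is locally bounded on its domain $\ran\partial f$, so together with the hypothesis $f$ is essentially strictly convex. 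The only point needing care is the clean matching of the finite-dimensional (Rockafellar) and Banach-space (BBC1) notions of essential strict convexity, which this very equivalence is designed to reconcile.
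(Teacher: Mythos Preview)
The paper does not supply its own proof of this statement: Fact~\ref{legenFa:1} is quoted verbatim from \cite[Theorem~5.6(ii)\&(iii) and Theorem~5.11(ii)]{BBC1} and no argument is given in the present paper. There is therefore no ``paper's proof'' to compare your proposal against; the authors simply invoke the result as a known fact.

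Your sketch is nonetheless essentially the argument one finds in \cite{BBC1} (and, in finite dimensions, in \cite{Rock70CA}). Two comments on its internal soundness. First, in $(a)\Rightarrow(b)$ you correctly identify the one nontrivial ingredient---the converse to Fact~\ref{pheps:11}, namely that local boundedness of a maximally monotone operator at a point forces that point into the interior of the closure of its domain---and you should cite it precisely (e.g., \cite[Theorem~2.28]{ph} together with \cite[Theorem~1.20]{ph}). Second, in part~\ref{legenFa:1E3} your route through duality is a little circular as written: you invoke ``the result underlying Fact~\ref{phiEmth}'' to pass from strict convexity on convex subsets of $\dom\partial f$ to essential smoothness of $f^*$, but Fact~\ref{phiEmth} as stated in this paper already uses the BBC1 definition of essential strict convexity (which includes the local-boundedness clause you are trying to establish). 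What you really need is Rockafellar's original \cite[Theorem~26.3]{Rock70CA}, which is phrased with the \emph{finite-dimensional} definition (strict convexity on convex subsets of $\dom\partial f$ only); once that is granted, your chain $f^*$ essentially smooth $\Rightarrow$ $\dom\partial f^*=\inte\dom f^*$ (by part~\ref{legenFa:1E1}) $\Rightarrow$ $\partial f^*=(\partial f)^{-1}$ locally bounded on its domain (by Fact~\ref{pheps:11}) is clean.
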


\begin{fact}\label{ExVis:1}\emph{(See \cite[Fact~5.3.3, page~239]{BorVan}.)}
Let $f:X\rightarrow\RX$ be  proper  lower semicontinuous and strictly convex. Then $(x, f(x))$ is an extreme point of
$\epi f$ for every $x\in\dom f$.
\end{fact}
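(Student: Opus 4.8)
The plan is to argue by contradiction straight from the definition of extreme point recorded above. Fix $x\in\dom f$ and suppose $(x,f(x))$ is \emph{not} an extreme point of $\epi f$. Then there exist points $(u,s),(v,t)\in\epi f$, both different from $(x,f(x))$, and $\lambda\in[0,1]$ with $(x,f(x))=\lambda(u,s)+(1-\lambda)(v,t)$. The endpoint values $\lambda=0$ and $\lambda=1$ would force $(v,t)=(x,f(x))$ or $(u,s)=(x,f(x))$, contradicting the choice of the two points, so necessarily $\lambda\in\zeroun$. Reading off the two coordinates gives $x=\lambda u+(1-\lambda)v$ and $f(x)=\lambda s+(1-\lambda)t$, and since $(u,s),(v,t)\in\epi f$ we have $s\geq f(u)$ and $t\geq f(v)$; in particular $u,v\in\dom f$, so that $f(u),f(v),s,t$ are all finite real numbers.

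The key step is a short chain of inequalities that is forced to collapse to equalities. Convexity of $f$ gives $f(x)=f(\lambda u+(1-\lambda)v)\leq\lambda f(u)+(1-\lambda)f(v)$, while the epigraph bounds give $\lambda f(u)+(1-\lambda)f(v)\leq\lambda s+(1-\lambda)t=f(x)$. Hence both inequalities hold with equality. Writing $\lambda\big(s-f(u)\big)+(1-\lambda)\big(t-f(v)\big)=0$ with nonnegative summands and $\lambda\in\zeroun$ forces $s=f(u)$ and $t=f(v)$; and the remaining equality $f(\lambda u+(1-\lambda)v)=\lambda f(u)+(1-\lambda)f(v)$ with $\lambda\in\zeroun$ is precisely the equality case of strict convexity, which forces $u=v$.

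From $u=v$ we get $x=\lambda u+(1-\lambda)v=u=v$ and then $f(x)=f(u)=s=f(v)=t$, so $(u,s)=(x,f(x))=(v,t)$, contradicting that both were chosen distinct from $(x,f(x))$. This proves the claim. The only place any care is needed is the invocation of strict convexity in the equality case: it is legitimate exactly because we have already reduced to $\lambda\in\zeroun$ and have confirmed $u,v\in\dom f$ with all values finite, so the strict convexity inequality genuinely applies and its failure to be strict can only mean $u=v$. Everything else is routine bookkeeping.
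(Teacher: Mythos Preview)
Your argument is correct. The paper itself does not supply a proof of this statement: it is recorded as a Fact with a citation to \cite[Fact~5.3.3, page~239]{BorVan}, so there is no in-paper proof to compare against. Your contradiction argument from the paper's stated definition of extreme point is the standard one and is carried out cleanly; the reduction to $\lambda\in\zeroun$, the collapse of the inequality chain $f(x)\leq\lambda f(u)+(1-\lambda)f(v)\leq\lambda s+(1-\lambda)t=f(x)$ to equalities, and the invocation of strict convexity (with $u,v\in\dom f$ already secured) are all handled correctly. Note, incidentally, that lower semicontinuity is not actually used anywhere in your proof---the conclusion follows from properness and strict convexity alone---but carrying the extra hypothesis does no harm.
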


\begin{lemma}\label{Ecbness}
Let $A:E\To E$ be  monotone with
 $\inte\dom A\neq\varnothing$.   Let $C$ be a bounded closed subset of
 $\inte\dom A$. Then there exists $M>0$ such that
 \begin{align*}
 \sup_{a^*\in Aa,\, a\in C}\|a^*\|\leq M.
 \end{align*}

\end{lemma}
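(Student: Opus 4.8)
The plan is to combine Rockafellar's local boundedness result (Fact~\ref{pheps:11}) with the compactness of $C$. Since $E$ is the finite-dimensional space $\RR^d$ and $C$ is bounded and closed, $C$ is compact; this is the only place finite-dimensionality enters. The strategy is to cover $C$ by the balls on which $A$ is locally bounded, pass to a finite subcover, and take the maximum of the finitely many local bounds.

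First I would note that $C\subseteq\inte\dom A$, so Fact~\ref{pheps:11} applies at every point $x\in C$: there exist $\delta_x>0$ and $K_x>0$ such that $\sup_{y^*\in Ay}\|y^*\|\leq K_x$ for all $y\in B(x,\delta_x)\cap\dom A$. In particular, each $x\in C$ lies in its own open ball $U(x,\delta_x)$, so the family $\{U(x,\delta_x)\mid x\in C\}$ is an open cover of $C$. By compactness I would extract a finite subcover $U(x_1,\delta_{x_1}),\ldots,U(x_N,\delta_{x_N})$ of $C$ and set $M:=\max\{K_{x_1},\ldots,K_{x_N}\}$.

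Finally, for an arbitrary $a\in C$ and $a^*\in Aa$, the point $a$ belongs to some $U(x_i,\delta_{x_i})\subseteq B(x_i,\delta_{x_i})$, and since $a\in C\subseteq\inte\dom A\subseteq\dom A$, the local bound at $x_i$ yields $\|a^*\|\leq K_{x_i}\leq M$. Taking the supremum over all such $a$ and $a^*$ gives the claim. The argument is essentially routine; the only points requiring a little care are verifying that the balls genuinely cover $C$ (immediate, since each $x$ sits in $U(x,\delta_x)$) and that Fact~\ref{pheps:11} is legitimately applicable at every point of $C$ (which holds precisely because $C$ is contained in $\inte\dom A$). I do not anticipate any substantive obstacle beyond invoking finite-dimensionality to obtain compactness of $C$ and hence a finite subcover.
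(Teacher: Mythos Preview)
Your proposal is correct and follows essentially the same approach as the paper: apply Fact~\ref{pheps:11} at each point of $C$ to obtain local bounds, use compactness of $C$ (bounded and closed in $E=\RR^d$) to extract a finite subcover, and take the maximum of the corresponding bounds. The only cosmetic difference is notation (the paper writes $M_x$ where you write $K_x$).
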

\begin{proof}
Let $x\in C$. By Fact~\ref{pheps:11}, there exist
$\delta_x >0$ and $M_x >0$ such that
\begin{align}
\sup_{a^*\in Aa, \, a\in U(x,\delta_x)}\|a^*\|\leq M_x.\label{Ecbness:Ea1}
\end{align}
Then we have
\begin{align}
C\subseteq\bigcup_{x\in C}U(x,\delta_x).
\end{align}
Since $C$ is compact, there exists $N\in\NN$ such that
$(x_n)_{n=1}^{N}$ in $C$ and
\begin{align}
C\subseteq\bigcup^{N}_{n=1}U(x_n,\delta_{x_n}).\label{Ecbness:E1}
\end{align}
Set $M:=\max\{M_{x_n}\mid n=1,\cdots, N\}$.
Then by \eqref{Ecbness:Ea1} and \eqref{Ecbness:E1},
$
\sup_{a^*\in Aa,\, a\in C}\|a^*\| \leq M$.
\end{proof}

\begin{remark}  If $C$ is assumed norm compact, this proof remains valid in a general Banach space. \qede \end{remark}

In the following subsection we turn to  properties of the function $I_{\phi}$.

\subsection{Basic properties of $I_\phi$}\label{ss:I}

\begin{fact}[Rockafellar]\label{Intephi}
\emph{(See 
\cite[Theorem 3C and Theorem~3H]{Rock7604} and \cite[Exercise~6.3.7, page~306]{BorVan}.)}
Let $\phi:E\rightarrow\RX$ be proper, lower semicontinuous and convex.
Then
$I_{\phi}$ is proper lower semicontinuous and convex, and $
I^*_{\phi}=I_{\phi^*}$.
Moreover,
\begin{align*}
x^*\in\partial I_{\phi}(x)\Longleftrightarrow
\big( x^*(s)\in\partial \phi(x(s))\,\text{for almost all\ $s$\ in\ $S$}\big).
\end{align*}

\end{fact}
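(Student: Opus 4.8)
Since this is a \emph{Fact} quoted from Rockafellar, I record how I would establish its three assertions in turn. \emph{Properness, convexity and lower semicontinuity of $I_{\phi}$.} Because $\phi$ is proper lower semicontinuous and convex on $E$, it admits a continuous affine minorant, say $\phi(y)\geq\scal{y}{a}-b$ with $a\in E$ and $b\in\RR$; since $\mu$ is finite, every $x\in L^1_E(S,\mu)$ satisfies $I_{\phi}(x)\geq\scal{x}{a}-b\mu(S)>-\infty$, while evaluating at a constant function with value in $\dom\phi$ gives a finite value, so $I_{\phi}$ is proper. Convexity is immediate from the pointwise convexity of $\phi$ and monotonicity of the integral. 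For lower semicontinuity I would argue sequentially, since $L^1_E(S,\mu)$ is metrizable: given $x_n\to x$ in $\|\cdot\|_1$, pass to a subsequence realizing $\liminf_n I_{\phi}(x_n)$ and then to a further subsequence converging $\mu$-a.e.; applying Fatou's lemma to the nonnegative integrands $\phi(x_n(s))-\scal{x_n(s)}{a}+b$, and using $\scal{x_n}{a}\to\scal{x}{a}$ together with the lower semicontinuity of $\phi$, yields $\liminf_n I_{\phi}(x_n)\geq I_{\phi}(x)$.

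\emph{The conjugate formula $I^*_{\phi}=I_{\phi^*}$.} One inequality is free: for any $x\in L^1_E(S,\mu)$ and $x^*\in L^{\infty}_E(S,\mu)$ the pointwise Fenchel--Young inequality $\scal{x(s)}{x^*(s)}-\phi(x(s))\leq\phi^*(x^*(s))$ integrates to $\scal{x}{x^*}-I_{\phi}(x)\leq I_{\phi^*}(x^*)$, whence $I^*_{\phi}(x^*)\leq I_{\phi^*}(x^*)$. The reverse inequality is the crux and the main obstacle: I must produce $x\in L^1_E(S,\mu)$ whose integrand nearly attains $\phi^*(x^*(s))=\sup_{y\in E}\big(\scal{y}{x^*(s)}-\phi(y)\big)$ simultaneously for almost all $s$. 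Here I would invoke a measurable selection theorem: since $\phi$ is autonomous and $E$ is finite dimensional, the closed-valued multifunction $s\mapsto\menge{y\in E}{\scal{y}{x^*(s)}-\phi(y)\geq\alpha(s)}$ is measurable for a measurable threshold $\alpha$, so it admits a measurable selection $x_n$ with $\scal{x_n(s)}{x^*(s)}-\phi(x_n(s))\geq\min\{\phi^*(x^*(s)),n\}-\tfrac1n$. Such $x_n$ need not lie in $L^1_E(S,\mu)$, so I would truncate on the sets where $\|x_n(s)\|$ is large and exploit the finiteness of $\mu$ together with monotone convergence to exchange the supremum and the integral, obtaining $I^*_{\phi}(x^*)\geq I_{\phi^*}(x^*)$. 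Keeping the selection integrable without sacrificing optimality is exactly the delicate point.

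\emph{The subdifferential formula.} Once the conjugate formula is in hand this is a clean consequence of Fenchel--Young equality. If $x^*(s)\in\partial\phi(x(s))$ a.e., then $\phi(x(s))+\phi^*(x^*(s))=\scal{x(s)}{x^*(s)}$ a.e.; integrating and using $I_{\phi^*}=I^*_{\phi}$ gives $I_{\phi}(x)+I^*_{\phi}(x^*)=\scal{x}{x^*}$, that is, $x^*\in\partial I_{\phi}(x)$. Conversely, if $x^*\in\partial I_{\phi}(x)$ then $I_{\phi}(x)+I_{\phi^*}(x^*)=\scal{x}{x^*}$, so $\int_S\big(\phi(x(s))+\phi^*(x^*(s))-\scal{x(s)}{x^*(s)}\big)\,{\rm d}\mu(s)=0$; the integrand is nonnegative by Fenchel--Young, hence vanishes $\mu$-a.e., which is precisely $x^*(s)\in\partial\phi(x(s))$ for almost all $s$. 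The only analytic input beyond the conjugate formula is that a nonnegative measurable function with zero integral vanishes almost everywhere.
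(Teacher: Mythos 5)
The paper offers no proof of this Fact---it is quoted verbatim from Rockafellar \cite{Rock7604} (see also \cite{BorVan})---so your sketch can only be measured against the classical argument, whose outline it follows faithfully: affine minorant plus Fatou for properness and lower semicontinuity, the integrated Fenchel--Young inequality for the easy half of $I_\phi^*=I_{\phi^*}$, a measurable selection for the hard half, and a.e.\ vanishing of the nonnegative Fenchel gap for the subdifferential formula. Your first and third parts are correct and essentially complete; in the third, note only that finiteness of $I_\phi(x)$ need not be assumed in the converse direction, since $x^*(s)\in\partial\phi(x(s))$ a.e.\ gives $\phi(x(s))=\scal{x(s)}{x^*(s)}-\phi^*(x^*(s))$ a.e., and the right side is sandwiched between integrable functions because $x\in L^1_E(S,\mu)$, $x^*\in L^\infty_E(S,\mu)$, $\mu$ is finite, and $\phi^*$ dominates an affine function of $x^*(s)$.

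The genuine gap is the one you yourself flag and then leave open: making the near-optimal selection integrable. As written, ``truncate on the sets where $\|x_n(s)\|$ is large'' is not yet a proof step, and its naive reading---replace $x_n(s)$ by $0$ on those sets---fails outright whenever $0\notin\dom\phi$ (for instance the Burg integrand of Example~\ref{lorm:1}), because the truncated function then has $I_\phi=+\infty$ and contributes nothing to the supremum defining $I_\phi^*$. The standard repair is to \emph{splice} rather than truncate: fix $x_0\in\dom I_\phi$, e.g.\ a constant function with value in $\dom\phi$ (legitimate since $\mu(S)<+\infty$), and set $y_k:=x_n$ on $S_k:=\menge{s\in S}{\|x_n(s)\|\leq k}$ and $y_k:=x_0$ on $S_k^c$. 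Then $y_k\in L^1_E(S,\mu)$, $I_\phi(y_k)$ is well defined (on $S_k$ the selection inequality bounds $\phi(x_n(s))$ above, since $\min\{\phi^*(x^*(s)),n\}$ is bounded below by an affine function of $x^*(s)$), and $\scal{y_k}{x^*}-I_\phi(y_k)$ splits as $\int_{S_k}\big(\scal{x_n(s)}{x^*(s)}-\phi(x_n(s))\big)\,{\rm d}\mu(s)+\int_{S_k^c}\big(\scal{x_0(s)}{x^*(s)}-\phi(x_0(s))\big)\,{\rm d}\mu(s)$. Letting $k\to\infty$, the second term vanishes (fixed integrable integrand, $\mu(S_k^c)\downarrow0$), while the first is at least $\int_{S_k}\big(\min\{\phi^*(x^*(s)),n\}-\tfrac1n\big)\,{\rm d}\mu(s)\to\int_S\min\{\phi^*(x^*(s)),n\}\,{\rm d}\mu(s)-\tfrac{\mu(S)}{n}$ by dominated convergence; finally $n\to\infty$ and monotone convergence yield $I_\phi^*(x^*)\geq I_{\phi^*}(x^*)$. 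Two further pedantic points in your selection step deserve a word: nonemptiness of the superlevel sets uses $\phi^*(x^*(s))>-\infty$, i.e.\ properness of $\phi$, and their closedness uses upper semicontinuity of $y\mapsto\scal{y}{x^*(s)}-\phi(y)$; measurability of the selection can be handled exactly as in the proof of Lemma~\ref{BLal:2} via \cite{KleThom}. With these insertions your outline becomes a complete proof along the same lines as the cited sources.
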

\begin{remark}\label{reInteRoc}Let $\phi:E\rightarrow\RX$ be proper lower semicontinuous and convex.
By Fact~\ref{LoweSC:1} and Fact~\ref{Intephi}, $I_{\phi}$ is proper weak--lower semicontinuous and convex. \qede
\end{remark}

The following three results were proved by Borwein and Lewis when $E=\RR$.
Their proofs can be adapted to  the general space $E$. For the readers' convenience, we record full proofs herein.
\begin{fact}\emph{(See \cite[Lemma~3.1)]{BorLew1}.)}
\label{legenFa:2}Let $\phi:E\rightarrow\RX$ be proper, lower semicontinuous and convex.
Then
$I_{\phi}$ is strictly convex on its domain if and only if $\phi$
is strictly convex on its domain.
\end{fact}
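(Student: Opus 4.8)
The plan is to prove the two implications separately. For the direction asserting that strict convexity of $I_\phi$ forces strict convexity of $\phi$, I would argue by contraposition using constant functions. Suppose $\phi$ fails to be strictly convex on $\dom\phi$: there exist $u\neq v$ in $\dom\phi$ and $\lambda\in\zeroun$ with $\phi(\lambda u+(1-\lambda)v)=\lambda\phi(u)+(1-\lambda)\phi(v)$. Since $(S,\mu)$ is finite, the constant functions $x\equiv u$ and $y\equiv v$ lie in $L^1_E(S,\mu)$ and in $\dom I_\phi$ (because $\phi(u),\phi(v)$ are finite). Then $I_\phi(x)=\mu(S)\phi(u)$, $I_\phi(y)=\mu(S)\phi(v)$, and $I_\phi(\lambda x+(1-\lambda)y)=\mu(S)\phi(\lambda u+(1-\lambda)v)$, so equality holds in the convexity inequality for $I_\phi$ while $x\neq y$ (as $\mu\neq 0$ and $u\neq v$, the functions differ on all of $S$). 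This contradicts strict convexity of $I_\phi$.

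For the converse, assume $\phi$ is strictly convex on $\dom\phi$, and fix $x,y\in\dom I_\phi$ with $x\neq y$ and $\lambda\in\zeroun$. By Fact~\ref{Intephi}, $I_\phi$ is proper and convex, so $\dom I_\phi$ is convex, $\lambda x+(1-\lambda)y\in\dom I_\phi$, and all three values $I_\phi(x),I_\phi(y),I_\phi(\lambda x+(1-\lambda)y)$ are real. Since $I_\phi(x),I_\phi(y)<+\infty$, we have $x(s),y(s)\in\dom\phi$ for almost every $s$, and hence $\lambda x(s)+(1-\lambda)y(s)\in\dom\phi$ a.e.\ because $\dom\phi$ is convex. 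Let $T:=\{s\in S\mid x(s)\neq y(s)\}$; since $x\neq y$ in $L^1_E(S,\mu)$ we have $\mu(T)>0$. For a.e.\ $s\in T$ strict convexity of $\phi$ yields the strict pointwise inequality $\phi(\lambda x(s)+(1-\lambda)y(s))<\lambda\phi(x(s))+(1-\lambda)\phi(y(s))$, while for $s\notin T$ equality holds trivially.

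It then remains to pass from a pointwise strict inequality on $T$ to a strict inequality of integrals. I would set $g(s):=\lambda\phi(x(s))+(1-\lambda)\phi(y(s))-\phi(\lambda x(s)+(1-\lambda)y(s))$, which is finite and nonnegative a.e.\ and strictly positive on $T$. Writing $T=\bigcup_{n}\{s\in T\mid g(s)>1/n\}$ and using $\mu(T)>0$, some level set $T_n$ has positive measure, whence $\int_S g\,{\rm d}\mu\geq\int_{T_n}g\,{\rm d}\mu\geq\tfrac1n\mu(T_n)>0$. Since the three integrals are finite, this rearranges to $I_\phi(\lambda x+(1-\lambda)y)<\lambda I_\phi(x)+(1-\lambda)I_\phi(y)$, i.e.\ $I_\phi$ is strictly convex on its domain. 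I expect the only mild obstacle to be the measure-theoretic book-keeping: ensuring the integrands are finite almost everywhere so that the subtraction defining $g$ is legitimate, and justifying that the set $T$ where $x$ and $y$ differ has positive measure. Both follow from $x,y\in\dom I_\phi$ together with the identification of functions in $L^1_E(S,\mu)$ that agree almost everywhere; the algebraic content is immediate from the definition of strict convexity of $\phi$.
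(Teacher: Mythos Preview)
Your proof is correct and follows essentially the same approach as the paper: constant functions to transfer strict convexity from $I_\phi$ to $\phi$, and the nonnegative gap function $g(s)=\lambda\phi(x(s))+(1-\lambda)\phi(y(s))-\phi(\lambda x(s)+(1-\lambda)y(s))$ together with the level sets $\{g\geq 1/n\}$ for the converse. The only cosmetic differences are that the paper argues the first implication directly (rather than by contraposition) and the second by contradiction (rather than directly), but the mathematical content is identical.
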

\allowdisplaybreaks
\begin{proof}
``$\Rightarrow$":
 Let $ v, w\in \dom\phi$ with $v\neq w$. Set  $x(s):=v$ and $y(s):=w$ for every $s\in S$.
Then $\{x,y\}\subseteq L^1_E(S,\mu)$ and  $x\neq y$. Let $\lambda\in\left]0,1\right[$.
Since $I_{\phi}$ is strictly convex on its domain,
\begin{align*}
&\phi\big(\lambda v+(1-\lambda)w\big)=\frac{1}{\mu(S)}\int_S \phi\big(\lambda v+(1-\lambda)w\big){\rm d}\mu(s)\\
&=\frac{1}{\mu(S)}\int_S \phi\big(\lambda x(s)+(1-\lambda)y(s)\big){\rm d}\mu(s)\\
&=\frac{1}{\mu(S)}I_{\phi}\big(\lambda x+(1-\lambda)y\big)\\
&<\frac{1}{\mu(S)}\lambda I_{\phi}(x)+\frac{1}{\mu(S)}(1-\lambda) I_{\phi}(y)\\
&=\frac{1}{\mu(S)}\lambda \int_S {\phi}(v)d \mu(s)+\frac{1}{\mu(S)}(1-\lambda) \int_S {\phi}(w){\rm d}\mu(s)\\
&=\lambda {\phi}(v)+(1-\lambda) {\phi}(w).
\end{align*}
Hence $\phi$ is strictly  convex on its domain.

``$\Leftarrow$":  By Fact~\ref{Intephi}, $I_\phi$ is convex. Suppose to the contrary that
$I_{\phi}$ is not strictly convex on its domain. Then there exists $\lambda\in\left]0,1\right[$ and
$\{x,y\}\subseteq \dom I_{\phi}$ with  $x\neq y$ such that
\begin{align*}
I_{\phi}\big(\lambda x+(1-\lambda)y\big)-\lambda I_{\phi}(x)-(1-\lambda) I_{\phi}(y)=0.
\end{align*}
Then we have
\begin{align}\label{legenFa:2E1}
\int_S\Big(\lambda {\phi}(x(s))+(1-\lambda)
{\phi}(y(s))-\phi\big(\lambda x(s)+(1-\lambda)y(s)\big)\Big){\rm
d}\mu(s)=0.
\end{align}
Since $\phi$ is convex, \begin{align}g(s):=\lambda
{\phi}(x(s))+(1-\lambda) {\phi}(y(s))-\phi\big(\lambda
x(s)+(1-\lambda)y(s)\big)\geq0.\end{align} Set $T_m:=\big\{s\in
S\mid g(s)\geq \tfrac{1}{m}\big\},\,\forall m\in\NN$. Then by
\eqref{legenFa:2E1}, we have \begin{align*}0=\int_S g(s){\rm
d}\mu(s)\geq \int_{T_m} g(s){\rm d}\mu(s)\geq \tfrac{1}{m}\mu
(T_m),\quad\forall m\in\NN.\end{align*} Hence $\mu (T_m)=0, \forall
m\in\NN$. Thus, $\phi\big(\lambda x(s)+(1-\lambda)y(s)\big)-\lambda
{\phi}(x(s))-(1-\lambda) {\phi}(y(s))=0$ for all almost $s\in S$.
Since $\phi$ is strictly convex its domain, $x(s)=y(s)$ for all
almost $s\in S$. Hence $x$ is equivalent to $y$ and thus $x=y$,
which contradicts that $x\neq y$.
\end{proof}

Following  \cite{BorLew1}, given a measurable set $T\subseteq S$  we denote by $T^c:=\big\{s\in S\mid s\notin T\}$ and we denote the restriction of $\mu$ and $x\in L^1_E (S,\mu)$ to $T$ respectively by
 $\mu|_T$ and $x|_T$. We define $I_{\phi}^T: L^1_E (T,\mu |_T)\rightarrow\RX$ by
 $I_{\phi}^T(z):=\int_T\phi(z(s)){\rm d}\mu(s)$.

\begin{fact}\emph{(See \cite[Lemma~3.5)]{BorLew1}.)}
\label{legenFa:3}Let $\phi:E\rightarrow\RX$ be proper, lower semicontinuous and convex, and $T$ be a measurable subset of $S$.
Suppose that $(x_n)_{n\in\NN}$ in $L^1_E (S,\mu)$ such that $x_n \weakly x$. Then
$x_n|_T\weakly  x|_T$ in $L^1_E (T,\mu |_T)$. Moreover, if
$I_{\phi}(x_n)\longrightarrow I_{\phi}(x)<+\infty$, then
$I_{\phi}^T( x_n|_T)\longrightarrow I_{\phi}^T( x|_T)<+\infty$.
\end{fact}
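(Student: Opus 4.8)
The plan is to prove the two assertions in order, the weak--convergence statement first, since the convergence of the restricted integrals rests on it.

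For the first assertion I would test weak convergence in $L^1_E(T,\mu|_T)$ against an arbitrary element $g$ of its dual $L^\infty_E(T,\mu|_T)$, and exploit \emph{extension by zero}: define $\tilde g\in L^\infty_E(S,\mu)$ by $\tilde g:=g$ on $T$ and $\tilde g:=0$ on $T^c$. Then $\scal{x_n|_T}{g}=\scal{x_n}{\tilde g}$ and $\scal{x|_T}{g}=\scal{x}{\tilde g}$, so $x_n\weakly x$ in $L^1_E(S,\mu)$ at once gives $\scal{x_n|_T}{g}\to\scal{x|_T}{g}$; as $g$ is arbitrary, $x_n|_T\weakly x|_T$. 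The identical argument applied to $T^c$ yields $x_n|_{T^c}\weakly x|_{T^c}$, which I will need below. (That $x_n|_T\in L^1_E(T,\mu|_T)$ is immediate from finiteness of $\int_S\|x_n\|\,{\rm d}\mu$.)

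For the second assertion the starting point is the additive decomposition $I_\phi(y)=I_\phi^T(y|_T)+I_\phi^{T^c}(y|_{T^c})$. To give this meaning I would fix an affine minorant $\phi(\cdot)\ge\scal{\cdot}{x^*}+\beta$ of the proper lower semicontinuous convex $\phi$; integrating it shows each of the three functionals takes values in $\RX$ and, because $x_n\weakly x$ forces $\sup_n\|x_n\|_1<+\infty$, that $a_n:=I_\phi^T(x_n|_T)$ and $b_n:=I_\phi^{T^c}(x_n|_{T^c})$ are bounded below uniformly in $n$. Since $I_\phi(x)<+\infty$ while both pieces are bounded below, the limits $a:=I_\phi^T(x|_T)$ and $b:=I_\phi^{T^c}(x|_{T^c})$ are \emph{finite}. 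By Fact~\ref{Intephi} applied to $(T,\mu|_T)$ and to $(T^c,\mu|_{T^c})$ together with Fact~\ref{LoweSC:1}, the restricted functionals $I_\phi^T$ and $I_\phi^{T^c}$ are weak--lower semicontinuous, so the weak convergences from the first part give $a\le\liminf a_n$ and $b\le\liminf b_n$.

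What then remains is purely arithmetic. From $a_n+b_n=I_\phi(x_n)\to I_\phi(x)=a+b$, superadditivity of $\liminf$ yields $a+b\le\liminf a_n+\liminf b_n\le\liminf(a_n+b_n)=a+b$, forcing $\liminf a_n=a$ and $\liminf b_n=b$ (both finite). Then $\limsup a_n\le\lim(a_n+b_n)-\liminf b_n=(a+b)-b=a$, whence $a_n\to a=I_\phi^T(x|_T)$, as required. The hard part will be the bookkeeping with extended--real values: one must secure the uniform lower bound on $a_n,b_n$ and the finiteness of $a,b$ \emph{before} manipulating $\liminf$ and $\limsup$, so as never to meet an $\infty-\infty$ indeterminacy. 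By contrast, the weak--convergence step and the weak lower semicontinuity are routine once the extension--by--zero device is in place.
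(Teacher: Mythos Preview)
Your proposal is correct and follows essentially the same route as the paper: extension by zero to obtain $x_n|_T\weakly x|_T$ (and on $T^c$), weak lower semicontinuity of $I_\phi^T$ and $I_\phi^{T^c}$ via Fact~\ref{Intephi} and Fact~\ref{LoweSC:1}, and then the $\liminf/\limsup$ sandwich built on the decomposition $I_\phi=I_\phi^T+I_\phi^{T^c}$. The only difference is cosmetic: the paper goes directly to $\limsup a_n\le\lim(a_n+b_n)-\liminf b_n\le a$ without first isolating $\liminf a_n=a$, whereas you insert that intermediate equality; your explicit invocation of an affine minorant to secure the uniform lower bounds and avoid $\infty-\infty$ is a point the paper leaves implicit in its appeal to properness of $I_\phi^{T^c}$.
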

\begin{proof}
We first show that $x_n|_T\weakly  x|_T$.
Let $x^*\in L^{\infty}_E(T,\mu)$. Then we define $y^*$ by $y^*(s):= x^*(s)$, if $s\in T$;
$y^*(s):= 0$, if $s\in T^c$. Then $y^*\in L^{\infty}_E(S,\mu)$ and
$\langle x_n|_T, x^*\rangle=\langle x_n, y^*\rangle\longrightarrow
\langle x, y^*\rangle=\langle x|_T, x^*\rangle$.
Hence $x_n|_T\weakly  x|_T$.

Now we show that $I_{\phi}^T( x_n|_T)\longrightarrow I_{\phi}^T( x|_T)<+\infty$.
Since $x_n|_T\weakly  x|_T$ and $x_n|_{T^c}\weakly  x|_{T^c}$,  by Fact~\ref{Intephi} and Remark~\ref{reInteRoc},
\begin{align}\liminf I_{\phi}^T( x_n|_T)\geq I_{\phi}^T( x|_T)\quad\text{and}\quad
\liminf I_{\phi}^{T^c}( x_n|_{T^c})\geq I_{\phi}^{T^c}( x|_{T^c}). \label{legenFa:3E1}
\end{align}
Then we have
\begin{align*}
&\limsup I_{\phi}^T( x_n|_T)=\limsup \big(I_{\phi}( x_n)-I_{\phi}^{T^c}( x_n|_{T^c})\big)\\
&=\lim I_{\phi}( x_n)-\liminf I_{\phi}^{T^c}( x_n|_{T^c})\\
&\leq I_{\phi}(x)-I_{\phi}^{T^c}( x|_{T^c})\\
&=I_{\phi}^{T}( x|_{T})<+\infty\quad\text{(since $I_{\phi}(x)<+\infty$ and $I_{\phi}^{T^c}( x|_{T^c})>-\infty$ by Fact~\ref{Intephi})}.
\end{align*}
Then by \eqref{legenFa:3E1}, $\lim I_{\phi}^T( x_n|_T)=I_{\phi}^{T}( x|_{T})$.
\end{proof}

\begin{fact}\emph{(See \cite[Lemma~3.2]{BorLew1}.)}\label{BLal:1}
Let $\phi:E\rightarrow\RX$ be proper, lower semicontinuous and convex. Then
$I_{\phi^*}$ is Fr\'{e}chet differentiable everywhere on $L_E^{\infty}(S, \mu)$
if and only if $\phi^*$ is differentiable everywhere on $E$.
\end{fact}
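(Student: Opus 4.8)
The plan is to write $\psi:=\phi^*$, which is again proper, lower semicontinuous and convex on $E$, and to prove the equivalent assertion that $I_{\psi}\colon x^*\mapsto\int_S\psi(x^*(s)){\rm d}\mu(s)$ is Fr\'echet differentiable everywhere on $L^{\infty}_E(S,\mu)$ if and only if $\psi$ is differentiable everywhere on $E$ (recall $I_{\psi}=I_{\phi^*}=I^*_{\phi}$ is well defined by Fact~\ref{Intephi}). I will repeatedly use two elementary facts about $E=\RR^d$: a finite convex function that is differentiable at every point of $E$ is automatically of class $C^1$, so $\nabla\psi$ is continuous and $\dom\psi=E$; and on a bounded set the gradient is bounded, by Lemma~\ref{Ecbness} applied to the monotone operator $\partial\psi$ (single-valued, equal to $\nabla\psi$, with $\dom\partial\psi=E$), or simply by continuity on a compact ball.

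For necessity, suppose $I_{\psi}$ is Fr\'echet differentiable everywhere. Fix $p\in E$ and test at the constant function $c_p\equiv p$, noting $c_p\in L^{\infty}_E(S,\mu)$, $\|c_v\|_{\infty}=\|v\|$, and $I_{\psi}(c_v)=\mu(S)\psi(v)$ for every $v\in E$. Restricting the continuous linear Fr\'echet derivative $\Lambda:=\nabla I_{\psi}(c_p)$ to the finite-dimensional subspace $\{c_v\mid v\in E\}$ yields a linear functional on $E$, so there is $q\in E$ with $\Lambda(c_v)=\scal{q}{v}$. Feeding $h=c_v$ into the defining expansion $I_{\psi}(c_p+h)=I_{\psi}(c_p)+\Lambda(h)+o(\|h\|_{\infty})$ and dividing by $\mu(S)$ gives $\psi(p+v)=\psi(p)+\tfrac{1}{\mu(S)}\scal{q}{v}+o(\|v\|)$, so $\psi$ is differentiable at $p$ with $\nabla\psi(p)=q/\mu(S)$; since $p$ was arbitrary, $\psi$ is differentiable everywhere.

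For sufficiency, suppose $\psi$ is differentiable everywhere, fix $x^*\in L^{\infty}_E(S,\mu)$ and set $R:=\|x^*\|_{\infty}$. The candidate derivative is $g\colon s\mapsto\nabla\psi(x^*(s))$; since $\|x^*(s)\|\le R$ a.e.\ and $\nabla\psi$ is bounded on $B(0,R)$, $g$ lies in $L^{\infty}_E(S,\mu)\subseteq L^1_E(S,\mu)$ and $h\mapsto\scal{g}{h}$ is a continuous linear functional on $L^{\infty}_E(S,\mu)$. Writing, for $\|u\|\le R$ and $\|w\|\le1$,
\[
r(u,w):=\psi(u+w)-\psi(u)-\scal{\nabla\psi(u)}{w}=\int_0^1\scal{\nabla\psi(u+tw)-\nabla\psi(u)}{w}{\rm d}t,
\]
the remainder of $I_{\psi}$ at $x^*$ in direction $h$ is exactly $\int_S r(x^*(s),h(s)){\rm d}\mu(s)$. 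Bounding $|r(u,w)|\le\|w\|\,\sup_{0\le t\le1}\|\nabla\psi(u+tw)-\nabla\psi(u)\|\le\|w\|\,\eta(\|w\|)$, where $\eta$ is a nondecreasing modulus of uniform continuity of $\nabla\psi$ on the compact ball $B(0,R+1)$, I obtain $\int_S r(x^*(s),h(s)){\rm d}\mu(s)\le\mu(S)\,\|h\|_{\infty}\,\eta(\|h\|_{\infty})=o(\|h\|_{\infty})$, which is precisely Fr\'echet differentiability of $I_{\psi}$ at $x^*$, with derivative $g$.

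The only genuine obstacle lies in the sufficiency direction: differentiability of $\psi$ is a priori only pointwise, whereas Fr\'echet differentiability of $I_{\psi}$ demands a remainder estimate uniform over the whole essential range $\{x^*(s)\}$. The device that removes it is the uniform continuity of $\nabla\psi$ on $B(0,R+1)$, which converts the pointwise little-$o$ into the single modulus $\eta$; after that, the finiteness of $\mu$ together with $\|h(s)\|\le\|h\|_{\infty}$ finishes the estimate. It is also worth recording that Fact~\ref{Intephi} identifies $g$ as the unique subgradient of $I_{\psi}$ at $x^*$, which is consistent with its being the Fr\'echet derivative.
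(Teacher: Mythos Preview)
Your proof is correct and follows essentially the same strategy as the paper's: constant functions transfer differentiability from $L^\infty_E(S,\mu)$ down to $E$, and uniform continuity of $\nabla\psi$ on a compact ball together with a mean-value estimate pushes it back up. The only noteworthy difference is in the necessity direction: the paper invokes the symmetric second-difference criterion for differentiability of convex functions (Phelps, Exercise~1.24), whereas you simply restrict the Fr\'echet derivative $\Lambda$ to the subspace of constants and read off the gradient---your route is slightly more direct, and in the sufficiency direction your choice of the enlarged ball $B(0,R+1)$ is a little cleaner than the paper's $MB_E$, since it guarantees the perturbed points $x^*(s)+th(s)$ remain in the region of uniform continuity.
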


\begin{proof}
``$\Rightarrow$": Let $z\in E$ and set  $w(s):=z$ for every $s\in S$. Then $w(s)\in L^{\infty}_E(S,\mu)$.
Then we have $\phi^* (z)\mu(S)=I_{\phi^*} (w)<+\infty$ and hence $z\in\dom\phi^*$.
Thus ${\phi^*}$ is full domain.
Let $u,v\in E$. Now we show $\phi^*$ is differentiable at $u$.  Set $x(s):=u$ and $y(s):=v, \forall s\in S$.
Then $\{x(s), y(s)\}\subseteq L^{\infty}_E(S,\mu)$.
Let $t>0$. Then we have
\allowdisplaybreaks
 \begin{align*}
&\frac{\phi^*(u+tv)+\phi^*(u-tv)-2\phi^*(u)}{t}\\
&=\tfrac{1}{\mu(S)}\int_S\frac{\phi^*(u+tv)+\phi^*(u-tv)-2\phi^*(u)}{t}{\rm d}\mu(s)\\
 &=\tfrac{1}{\mu(S)}\int_S\frac{\phi^*\big(x(s)+ty(s)\big)+\phi^*(x(s)-ty(s))-2\phi^*(x(s))}{t}{\rm d}\mu(s)\\
 &=\tfrac{1}{\mu(S)}\frac{I_{\phi^*}\big(x+ty\big)+I_{\phi^*}(x-ty)-2I_{\phi^*}(x)}{t}\longrightarrow 0\quad\text{as}\quad
 t\longrightarrow0\quad\text{(by \cite[Exercise~1.24]{ph})}.
 \end{align*}
By \cite[Exercise~1.24]{ph} again, $\phi^*$ is differentiable at $u$.

``$\Leftarrow$": By \cite[Corollary, page~20]{ph},
$(\phi^*)'$ is continuous on $E$. Let $x^*\in L_E^{\infty}(S, \mu)$.
Then there exists $M>0$ such that $\|x^*(s)\|\leq M$ almost everywhere. We can and do suppose that
$\|x^*(s)\|\leq M, \forall s\in S$.
Since $M B_{E}$ is compact, $(\phi^*)'$ is uniformly continuous on $M B_{E}$.
Let $\varepsilon>0$. There exists $\delta>0$ such that
\begin{align}\big\|(\phi^*)' (x^*(s)+v)-(\phi^*)' x^*(s)\big\|\leq
\frac{\varepsilon}{\mu(S)}, \quad\forall\|v\|\leq\delta.\label{BLal:1E1}
\end{align}
Let $y^*\in L_E^{\infty}(S, \mu)$ with $\|y^*\|_{\infty}\leq\delta$. Then applying Mean Value Theorem,  we have
\begin{align*}
&\Big\|\int_S \phi^* \big(x^*(s)+y^*(s)\big){\rm d}\mu(s)-
\int_S \phi^* (x^*(s)){\rm d}\mu(s)-\int_S \Big\langle(\phi^*)'(x^*(s)),y^*(s)\Big\rangle {\rm d}\mu(s)\Big\|\\
&=\Big\|\int_S \left[\phi^* \big(x^*(s)+y^*(s)\big)-
\phi^* (x^*(s))- \Big\langle(\phi^*)'(x^*(s)),y^*(s)\Big\rangle\right]{\rm d}\mu(s)\Big\|\\
&=\Big\|\int_S \left[\Big\langle(\phi^*)' \big(x^*(s)+t_s y^*(s)\big), y^*(s)\Big\rangle- \Big\langle(\phi^*)'(x^*(s)),y^*(s)\Big\rangle\right]{\rm d}\mu(s)\Big\|,\quad \exists t_s\in\left]0,1\right[\\
&=\Big\|\int_S \left[\Big\langle(\phi^*)' \big(x^*(s)+t_s y^*(s)\big)-
(\phi^*)'(x^*(s)),y^*(s)\Big\rangle\right]{\rm d}\mu(s)\Big\|,\quad \exists t_s\in\left]0,1\right[\\
&\leq \int_S \Big\|\Big\langle(\phi^*)' \big(x^*(s)+t_s y^*(s)\big)-
(\phi^*)'(x^*(s)),y^*(s)\Big\rangle\Big\|{\rm d}\mu(s),\quad \exists t_s\in\left]0,1\right[\\
&\leq \int_S \frac{\varepsilon}{\mu(S)}\|y^*(s)\|{\rm d}\mu(s)\quad\text{(by \eqref{BLal:1E1})}\\
&\leq\frac{\varepsilon}{\mu(S)}\|y^*\|_{\infty}\mu(S)=\varepsilon\|y^*\|_{\infty}.
\end{align*}
Hence $I_{\phi^*}$ is Fr\'{e}chet differentiable at $x^*$.
\end{proof}

\begin{remark}Let $\phi:E\rightarrow\RX$ be proper, lower semicontinuous and convex.
By Fact~\ref{BLal:1} and $\phi^{**}=\phi$,
$\phi$ is differentiable everywhere on $E$ if and only if
$I_{\phi}$ is Fr\'{e}chet differentiable everywhere on $L^{\infty}_{E}(S, \mu)$.
\end{remark}

\subsection{Strong rotundity and stability}\label{ss:stab}

We may apply our results to an important optimization:

Let $(C_n)_{n\in\NN}$ and $C_{\infty}$ in $X$ be  closed convex sets, and let  $f:X\rightarrow\RX$ be a proper convex function with  weakly compact lower level sets.  We consider the following sequences of optimization problems (See \cite{BorLew1}.).
\begin{align*}
(P_n)&\qquad \qquad\qquad\qquad V(P_n):=\inf\big\{f(x)\mid x\in C_n\big\},\\
(P_{\infty})&\qquad\qquad \qquad\qquad V(P_{\infty}):=\inf\big\{f(x)\mid x\in C_{\infty}\big\}.
\end{align*}

\begin{fact}[Borwein and Lewis]\emph{(See \cite[Theorem~2.9(ii)]{BorLew1}.)}\label{BLal:4a}
Let $(C_n)_{n\in\NN}$ and $C_{\infty}$ in $X$ be closed convex sets,
and $f:X\rightarrow\RX$ be a proper convex function with  weakly compact lower level sets. Assume that \begin{align*}
\overline{\lim}^{\w}C_n\subseteq C_{\infty}\subseteq\bigcup_{m\geq1}\bigcap_{n\geq m}C_n.
\end{align*}
Then $V(P_n)\longrightarrow V(P_{\infty})$. If $V(P_{\infty})<+\infty$ and $f$ is strongly rotund, then
$(P_n)$ and $(P_{\infty})$ respectively have unique optimal solutions with $x_n$ and $x_{\infty}$, and $x_n\longrightarrow x_{\infty}$.
\end{fact}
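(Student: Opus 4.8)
The plan is to argue in three stages: convergence of the optimal values $V(P_n)\to V(P_\infty)$, existence and uniqueness of the minimizers, and finally the norm convergence $x_n\to x_\infty$, the last being where strong rotundity is decisive.

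First I would prove $V(P_n)\to V(P_\infty)$ by squeezing. For the upper bound I use only the right-hand inclusion $C_\infty\subseteq\bigcup_{m\geq1}\bigcap_{n\geq m}C_n$: given $x\in C_\infty$ there is an $m$ with $x\in C_n$ for all $n\geq m$, so $V(P_n)\leq f(x)$ for $n\geq m$, whence $\limsup_n V(P_n)\leq f(x)$; taking the infimum over $x\in C_\infty$ gives $\limsup_n V(P_n)\leq V(P_\infty)$. For the lower bound I use the left-hand inclusion $\overline{\lim}^{\w}C_n\subseteq C_\infty$. Writing $L:=\liminf_n V(P_n)$ and assuming $L<+\infty$ (otherwise there is nothing to show), I pass to a subsequence realizing $L$ and pick near-minimizers $x_{n_k}\in C_{n_k}$ with $f(x_{n_k})\to L$. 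These eventually lie in a fixed lower level set of $f$, which is weakly compact, hence (by the Eberlein--\v{S}mulian theorem) weakly sequentially compact; so a further subsequence satisfies $x_{n_{k_j}}\weakly\bar x$. By the very definition of $\overline{\lim}^{\w}C_n$ we get $\bar x\in C_\infty$, and weak lower semicontinuity of $f$ (its lower level sets being weakly compact, hence weakly closed; cf.\ Fact~\ref{LoweSC:1}) yields $f(\bar x)\leq L$, so $V(P_\infty)\leq f(\bar x)\leq L$. The two bounds combine to give $V(P_n)\to V(P_\infty)$.

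Next I would record existence and uniqueness. Each $C_n$ and $C_\infty$ is weakly closed (being closed and convex), $f$ is weakly lower semicontinuous, and $f$ has weakly compact lower level sets; hence, whenever the value is finite, the infimum is attained. Since $V(P_\infty)<+\infty$ and $V(P_n)\to V(P_\infty)$, the values are finite for all large $n$, so minimizers $x_\infty$ and $x_n$ (for large $n$) exist. Uniqueness is immediate from strict convexity of $f$ on its domain, which is part of strong rotundity: two distinct minimizers would have their midpoint in the convex feasible set with strictly smaller value, a contradiction.

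Finally I would establish $x_n\to x_\infty$. The tail of $(x_n)$ lies in a weakly compact lower level set of $f$, so it suffices to prove the weak convergence $x_n\weakly x_\infty$ and then invoke the Kadec property: since $f(x_n)=V(P_n)\to V(P_\infty)=f(x_\infty)$ with $x_\infty\in\dom f$, the Kadec property of the strongly rotund $f$ upgrades weak convergence to norm convergence. To obtain the weak convergence of the whole sequence I argue by subsequences: any subsequence, lying in the weakly sequentially compact level set, has a sub-subsequence $x_{n_{k_j}}\weakly\bar x$; exactly as in the first stage $\bar x\in C_\infty$ and $f(\bar x)\leq\liminf_j f(x_{n_{k_j}})=V(P_\infty)$, so $\bar x$ is optimal for $(P_\infty)$ and hence $\bar x=x_\infty$ by uniqueness. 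As every weakly convergent sub-subsequence has the same limit $x_\infty$, the whole sequence satisfies $x_n\weakly x_\infty$. I expect this last stage to be the main obstacle, since it is the only place requiring the full strength of strong rotundity: the value convergence and the existence/uniqueness are soft-analysis facts, whereas promoting weak convergence of the minimizers to norm convergence genuinely needs the Kadec property, fed by the matching of values $f(x_n)\to f(x_\infty)$ supplied by the first stage.
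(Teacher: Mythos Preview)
The paper does not supply its own proof of this statement: it is recorded as a Fact with a citation to \cite[Theorem~2.9(ii)]{BorLew1} and is used as a black box. There is therefore nothing in the paper to compare your argument against.

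That said, your three-stage argument is correct and is essentially the standard proof. Two minor remarks. First, your parenthetical appeal to Fact~\ref{LoweSC:1} for weak lower semicontinuity of $f$ is unnecessary (and slightly misaimed, since $f$ is not assumed norm-lsc): weak lower semicontinuity follows directly because the lower level sets are weakly compact, hence weakly closed. Second, the minimizers $x_n$ exist only once $V(P_n)<+\infty$, which is guaranteed for all large $n$ by $V(P_n)\to V(P_\infty)<+\infty$; the conclusion $x_n\to x_\infty$ should be read for such $n$, as you correctly note.
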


In a typical application, $C_{n+1} \subseteq C_n$ may be nested polyhedral approximations to a convex set $C_\infty:= \bigcap C_n$, that is \emph{constructible} in the sense of \cite{BorVan}.  We look at the failure of strong rotundity in more detail in Section \ref{ApVisin}.

\section{Properties of Legendre functions and $I_{\phi}$}\label{s:voi}

\begin{proposition}\label{ProInteg:A1}
Let $\phi:E\rightarrow\RX$ be proper, lower semicontinuous and convex with $\inte\dom\phi\neq\varnothing$.
Then $\phi$ is essentially smooth if and only if
 $\partial I_{\phi}$ is single-valued.
\end{proposition}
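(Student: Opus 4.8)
The plan is to reduce the statement to the pointwise description of $\partial I_{\phi}$ furnished by Rockafellar's Fact~\ref{Intephi}, combined with the characterization of essential smoothness in Fact~\ref{legenFa:1}\ref{legenFa:1E1}. Since $\inte\dom\phi\neq\varnothing$ is assumed, Fact~\ref{legenFa:1}\ref{legenFa:1E1} tells us that $\phi$ is essentially smooth if and only if $\partial\phi$ is single-valued; hence the whole proposition amounts to proving that $\partial\phi$ is single-valued if and only if $\partial I_{\phi}$ is single-valued, and the bridge between these two conditions is precisely the a.e.\ characterization $x^*\in\partial I_{\phi}(x)\iff x^*(s)\in\partial\phi(x(s))$ a.e.

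For the forward implication, suppose $\partial\phi$ is single-valued and take $x^*,y^*\in\partial I_{\phi}(x)$ for some $x\in\dom I_{\phi}$. By Fact~\ref{Intephi}, for almost every $s\in S$ both $x^*(s)$ and $y^*(s)$ lie in $\partial\phi(x(s))$, which is then a nonempty and hence (by single-valuedness) singleton set; therefore $x^*(s)=y^*(s)$ almost everywhere, i.e.\ $x^*=y^*$ in $L^{\infty}_E(S,\mu)$. This shows $\partial I_{\phi}$ is single-valued.

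For the converse, I would argue by contraposition: if $\partial\phi$ fails to be single-valued, there is a point $z\in\dom\partial\phi$ carrying two distinct subgradients $u^*\neq v^*$ in $\partial\phi(z)$. I would then test against the constant function $x\equiv z$, which lies in $\dom I_{\phi}$ because $z\in\dom\phi$ and $\mu$ is finite, together with the constant (hence $L^{\infty}$) selections $x^*\equiv u^*$ and $y^*\equiv v^*$. By Fact~\ref{Intephi} both $x^*$ and $y^*$ belong to $\partial I_{\phi}(x)$, yet they differ on all of $S$, a set of positive measure; this contradicts single-valuedness of $\partial I_{\phi}$, and the contraposition is complete.

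The argument is essentially bookkeeping once Fact~\ref{Intephi} and Fact~\ref{legenFa:1}\ref{legenFa:1E1} are in hand, so I do not anticipate a serious obstacle. The only points needing a little care are the correct reading of ``single-valued'' as ``at most one subgradient at each point,'' which is what legitimizes the almost-everywhere pointwise comparison in the forward direction, and the verification that the constant functions used in the converse genuinely lie in the relevant $L^1$ and $L^{\infty}$ spaces, both guaranteed by the finiteness of $\mu$ together with $z\in\dom\phi$ and the boundedness of $u^*,v^*$.
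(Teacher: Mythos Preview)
Your proposal is correct and follows essentially the same approach as the paper: both directions hinge on Fact~\ref{Intephi} together with Fact~\ref{legenFa:1}\ref{legenFa:1E1}, with the forward implication comparing two subgradients pointwise a.e.\ and the converse testing against constant functions built from a point $z\in\dom\partial\phi$ and two subgradients there. The only cosmetic difference is that you phrase the converse as a contraposition whereas the paper argues it directly, but the content is identical.
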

\begin{proof}
``$\Rightarrow$":
 First, we show that $\partial I_{\phi}$ is single-valued.
Let $\{x^*,y^*\}\subseteq\partial I_{\phi}(x)$. Then by Fact~\ref{Intephi},
$x^*(s)\in \partial \phi(x(s))$ for almost all $s\in S$ and
$y^*(s)\in \partial \phi(x(s))$ for almost all $s\in S$.
Since $\partial \phi$ is single-valued. Then $x^*(s)=y^*(s)$ for almost all $s\in S$. Hence $x^*$ is equivalent to $y^*$ and then $x^*=y^*$.
Thus $\partial I_{\phi}(x)$ is single-valued.

``$\Leftarrow$": By Fact~\ref{legenFa:1}\ref{legenFa:1E1},
it suffices to show that $\partial \phi$ is single-valued.
Let $\{v_1^*, v^*_2\}\subseteq\partial \phi(v)$. Set $x(s):=v$,
 $x^*_1(s):=v^*_1$ and $x^*_2(s):=v^*_2$.
Then $x\in L^1_E (S,\mu)$, and $\{x_1^*, x^*_2\}\subseteq L_E^{\infty} (S,\mu)$.
Thus by Fact~\ref{Intephi},
$\{x_1^*, x^*_2\}\subseteq \partial I_{\phi}(x)$.
Since $\partial I_{\phi}$ is single-valued,
$x^*_1=x^*_2$ and hence $v^*_1=v^*_2$. Then $\partial \phi (x)$ is
single-valued and thus $\partial \phi$ is single-valued.
\end{proof}

\begin{corollary}\label{ProInteg:1}
Let $\phi:E\rightarrow\RX$ be proper, lower semicontinuous and convex with $\inte\dom\phi\neq\varnothing$.
Assume that
 $I_{\phi}$ is strictly convex on its domain
and that $\partial I_{\phi}$ is single-valued.
Then $\phi$ is Legendre.

\end{corollary}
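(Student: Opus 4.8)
The plan is to establish the two defining ingredients of a Legendre function separately, each of which follows almost immediately from a result proved earlier in this section. Recall that $\phi$ is Legendre exactly when it is both essentially smooth and essentially strictly convex. For essential smoothness I would invoke Proposition~\ref{ProInteg:A1} directly: since $\inte\dom\phi\neq\varnothing$ and $\partial I_{\phi}$ is assumed single-valued, that proposition yields at once that $\phi$ is essentially smooth.

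For essential strict convexity I would first translate the hypothesis on $I_{\phi}$ down to $\phi$ using Fact~\ref{legenFa:2}: since $I_{\phi}$ is strictly convex on its domain, $\phi$ is strictly convex on its domain $\dom\phi$. The next step is to observe that $\dom\partial\phi\subseteq\dom\phi$, so that every convex subset of $\dom\partial\phi$ is in particular a convex subset of the convex set $\dom\phi$, and strict convexity of $\phi$ on $\dom\phi$ is inherited by any such subset. Consequently $\phi$ is strictly convex on every convex subset of $\dom\partial\phi$, and since $X=E$ here, Fact~\ref{legenFa:1}\ref{legenFa:1E3} then gives that $\phi$ is essentially strictly convex.

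Combining the two conclusions, $\phi$ is simultaneously essentially smooth and essentially strictly convex, hence Legendre, as required. There is no genuine obstacle in this argument: it is a direct assembly of Proposition~\ref{ProInteg:A1}, Fact~\ref{legenFa:2}, and Fact~\ref{legenFa:1}\ref{legenFa:1E3}. The only point demanding a moment's care is the passage from strict convexity on all of $\dom\phi$ to strict convexity on convex subsets of $\dom\partial\phi$, which rests solely on the inclusion $\dom\partial\phi\subseteq\dom\phi$ together with the elementary fact that the restriction of a strictly convex function to a convex subset of its domain remains strictly convex.
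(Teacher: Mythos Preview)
Your proof is correct and follows essentially the same route as the paper: Proposition~\ref{ProInteg:A1} for essential smoothness, and Fact~\ref{legenFa:2} followed by Fact~\ref{legenFa:1}\ref{legenFa:1E3} for essential strict convexity. The only difference is that you spell out explicitly the elementary inclusion $\dom\partial\phi\subseteq\dom\phi$ justifying the passage to convex subsets of $\dom\partial\phi$, which the paper leaves implicit.
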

\begin{proof}
By Fact~\ref{legenFa:2},
$\phi$ is strictly convex on its domain.
 Fact~\ref{legenFa:1}\ref{legenFa:1E3}
 implies that $\phi$ is essentially strictly convex.

Applying Proposition~\ref{ProInteg:A1},
$\phi$ is essentially smooth.
Combining the above results, $\phi$ is Legendre.
\end{proof}

\begin{corollary}\label{ProInteg:2}
Let $\phi:\RR\rightarrow\RX$ be proper, lower semicontinuous and convex  with $\inte\dom\phi\neq\varnothing$. Then
 $\phi$ is Legendre  if and only if
 $I_{\phi}$ is strictly convex on its domain and
 $\partial I_{\phi}$ is single-valued.

\end{corollary}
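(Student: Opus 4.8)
The statement is a biconditional, and one of the two implications is already available. The direction ``$\Leftarrow$'' is precisely Corollary~\ref{ProInteg:1} specialized to $E=\RR$: its hypotheses (strict convexity of $I_\phi$ on its domain together with single-valuedness of $\partial I_\phi$) are exactly the right-hand side here, its conclusion is that $\phi$ is Legendre, and the standing assumption $\inte\dom\phi\neq\varnothing$ makes it applicable. So the entire task reduces to proving ``$\Rightarrow$'': assuming $\phi$ is Legendre, I must show that $I_\phi$ is strictly convex on its domain and that $\partial I_\phi$ is single-valued.

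The single-valuedness of $\partial I_\phi$ I would dispose of immediately. Since $\phi$ is Legendre it is in particular essentially smooth, so Proposition~\ref{ProInteg:A1} (direction ``$\Rightarrow$'') yields at once that $\partial I_\phi$ is single-valued. The only substantive point is therefore the strict convexity of $I_\phi$ on its domain. By Fact~\ref{legenFa:2} this is equivalent to $\phi$ being strictly convex on $\dom\phi$, so the plan is to prove the latter directly. From Legendreness $\phi$ is essentially strictly convex, hence by Fact~\ref{legenFa:1}\ref{legenFa:1E3} (applicable since $X=E=\RR$) strictly convex on every convex subset of $\dom\partial\phi$; and from essential smoothness with Fact~\ref{legenFa:1}\ref{legenFa:1E1} we have $\dom\partial\phi=\inte\dom\phi$. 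Taking the convex subset to be $\inte\dom\phi$ itself, I conclude that $\phi$ is strictly convex on the open interval $\inte\dom\phi$.

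It then remains to upgrade strict convexity on $\inte\dom\phi$ to strict convexity on all of $\dom\phi$, and I would argue by contradiction. If $\phi$ failed to be strictly convex on $\dom\phi$, there would be $u\neq v$ in $\dom\phi$ and $\lambda_0\in\left]0,1\right[$ giving equality in the convexity inequality; a routine one-variable argument then forces $\phi$ to be affine on the whole segment $[u,v]$. (Concretely, $h(t):=\phi\big((1-t)u+tv\big)-\big[(1-t)\phi(u)+t\phi(v)\big]$ is a nonpositive convex function on $[0,1]$ vanishing at $0$, at $1$, and at the interior point $t=1-\lambda_0$, and such a function is identically $0$.) Since $\dom\phi\subseteq\RR$ is an interval, the nondegenerate segment $[u,v]$ meets $\inte\dom\phi$ in a nondegenerate subinterval on which $\phi$ is then affine, contradicting strict convexity on $\inte\dom\phi$. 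This contradiction gives strict convexity of $\phi$ on $\dom\phi$, and Fact~\ref{legenFa:2} finishes the proof.

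The main obstacle is exactly this last upgrade, and it is the one place where one-dimensionality is indispensable: in higher dimensions a Legendre function need not be strictly convex up to the boundary of its domain, so the argument genuinely uses that $\dom\phi$ is an interval and that any nondegenerate segment in it hits $\inte\dom\phi$ in a nondegenerate subinterval. The affineness-on-the-segment step is elementary; the conceptual point worth flagging is precisely this use of $E=\RR$, which is what makes Corollary~\ref{ProInteg:2}, in contrast with Corollary~\ref{ProInteg:1}, a genuine equivalence rather than a single implication.
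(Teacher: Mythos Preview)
Your proof is correct and follows the same route as the paper: both directions cite the same results (Corollary~\ref{ProInteg:1} for ``$\Leftarrow$'', Proposition~\ref{ProInteg:A1} for single-valuedness, and Fact~\ref{legenFa:2} together with Fact~\ref{legenFa:1}\ref{legenFa:1E3} to reduce to strict convexity of $\phi$ on $\dom\phi$). The only difference is that the paper simply asserts the upgrade from strict convexity on $\inte\dom\phi$ to strict convexity on $\dom\phi$ in one clause, whereas you supply the explicit one-dimensional segment/affineness argument; your added justification is correct and is indeed where the hypothesis $E=\RR$ is used.
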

\begin{proof}
``$\Rightarrow$":
By Fact~\ref{legenFa:1}\ref{legenFa:1E3},
$\phi$ is strictly convex on $\inte\dom\phi$, and then
$\phi$ is strictly convex on $\dom\phi$. Hence $I_{\phi}$
 is strictly convex on its domain by Fact~\ref{legenFa:2}.
  By Proposition~\ref{ProInteg:A1}, $\partial I_{\phi}$ is single-valued.

``$\Leftarrow$": Applying Corollary~\ref{ProInteg:1} directly.
\end{proof}

Lemma~\ref{Ecbness} allows us to generalize \cite[Lemma~3.3]{BorLew1}.

\begin{lemma}\label{BLal:2}
Let $\phi:E\rightarrow\RX$ be proper, lower semicontinuous and convex,
 and let $x\in L^1_E(S,\mu)$. Assume that there exists a bounded closed subset $D$ of $\inte\dom \phi$
 such that $x(s)\in D$ almost everywhere on $S$.
 Then $\partial I_\phi(x)\neq \varnothing$.

\end{lemma}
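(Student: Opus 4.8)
The plan is to reduce everything to the pointwise characterization in Fact~\ref{Intephi}: since $\partial I_{\phi}(x)$ consists exactly of those $x^*\in L^{\infty}_E(S,\mu)$ with $x^*(s)\in\partial\phi(x(s))$ for almost every $s$, it suffices to produce one measurable and essentially bounded selection $s\mapsto x^*(s)$ of the multifunction $s\mapsto\partial\phi(x(s))$.

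First I would dispose of the structural preliminaries. Since $\mu$ is nonzero, $S$ carries positive measure and the hypothesis $x(s)\in D$ a.e. forces $D\neq\varnothing$; as $D$ is bounded and closed in $E=\RR^d$ it is compact, so $\inte\dom\phi\neq\varnothing$ and $\phi$ is continuous, hence bounded, on $D$. Consequently $I_{\phi}(x)=\int_S\phi(x(s)){\rm d}\mu(s)$ is finite, so $x\in\dom I_{\phi}$. Next I would apply Lemma~\ref{Ecbness} to the monotone operator $A:=\partial\phi$, noting $D\subseteq\inte\dom\phi\subseteq\inte\dom\partial\phi$ (the last inclusion holds because $\inte\dom\phi$ is open and contained in $\dom\partial\phi$). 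This yields $M>0$ with $\|a^*\|\leq M$ for every $a^*\in\partial\phi(a)$ and every $a\in D$. In particular, for almost every $s$ the value $\partial\phi(x(s))$ is a nonempty, closed, convex subset of $M B_E$, so any measurable selection of it will automatically satisfy $\|x^*\|_{\infty}\leq M$ and hence belong to $L^{\infty}_E(S,\mu)$.

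The only substantive step is the measurable selection itself, and this is where I expect the main difficulty to lie. My approach would be to use that $\gra\partial\phi$ is a closed---therefore Borel---subset of $E\times E$, a standard property of the subdifferential of a proper lower semicontinuous convex function. Because $x$ is measurable, the set $\{(s,a^*)\mid (x(s),a^*)\in\gra\partial\phi\}=\{(s,a^*)\mid a^*\in\partial\phi(x(s))\}$ is a measurable subset of $S\times E$ whose $s$-sections are nonempty for almost every $s$. Since $(S,\mu)$ is complete, the Aumann--von Neumann (graph-measurable) selection theorem then delivers a measurable map $x^*:S\to E$ with $x^*(s)\in\partial\phi(x(s))$ almost everywhere.

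Finally I would combine the pieces: the selection $x^*$ is measurable with $\|x^*(s)\|\leq M$ a.e., so $x^*\in L^{\infty}_E(S,\mu)$, and by construction $x^*(s)\in\partial\phi(x(s))$ a.e.; together with $x\in\dom I_{\phi}$, the characterization in Fact~\ref{Intephi} gives $x^*\in\partial I_{\phi}(x)$, so $\partial I_{\phi}(x)\neq\varnothing$. For the one-dimensional case of \cite{BorLew1} one can avoid the selection theorem by selecting a monotone one-sided derivative of $\phi$, but in $\RR^d$ some form of measurable selection---or, alternatively, an approximation of $x$ by simple functions that exploits the local boundedness and upper semicontinuity of $\partial\phi$ furnished by Lemma~\ref{Ecbness}---appears unavoidable.
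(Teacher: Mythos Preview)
Your proof is correct and follows the same overall strategy as the paper: reduce via Fact~\ref{Intephi} to producing a measurable, essentially bounded selection of $s\mapsto\partial\phi(x(s))$, use Lemma~\ref{Ecbness} for the uniform bound, and invoke a measurable selection theorem. The only difference is the specific selection tool: the paper verifies that $s\mapsto\partial\phi(x(s))$ is a measurable correspondence by using the upper semicontinuity of $\partial\phi$ on the compact set $D$ (from \cite[Lemma~2.6]{ph}) and then applies \cite[Theorem~14.2.1]{KleThom}, whereas you go through the closedness of $\gra\partial\phi$ and the Aumann--von~Neumann graph-measurable selection theorem, exploiting the completeness of $(S,\mu)$.
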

\begin{proof} By the assumption, there exists a measurable subset $T$ of $S$ such that
$\mu(T)=\mu(S)$ and $x(s)\in D, \forall s\in T$.
By \cite[Lemma~2.6]{ph}, $\partial \phi$ is
upper semicontinuous on $D$. Thus, for every closed set
$C\subseteq E$, we have
$(\partial \phi)_C:= D \cap \big((\partial \phi)^{-1} C\big)$ is closed.
Thus
\begin{align*}
\big\{s\in T\mid \partial \phi(x(s))\cap C\neq\varnothing\big\}
=\big\{s\in T\mid  s\in x^{-1}\left[(\partial \phi)_C\right] \big\}
\end{align*}
is measurable. Hence $s\mapsto\partial \phi(x(s))$ is measurable on $T$.
Then by \cite[Theorem~14.2.1]{KleThom},
there exists a measurable selection $x^*(s)\in\partial \phi(x(s))$   everywhere on $T$.
Then $x^*(s)\in\partial \phi(x(s))$  almost everywhere on $S$ by $\mu (T)=\mu (S)$.
By  Lemma~\ref{Ecbness}, $\{x^*(s)\mid s\in T\}\subseteq \partial \phi(D)$ is bounded,  and
then $x^*(s)$ is bounded almost everywhere on $S$ since $\mu (T)=\mu (S)$.
Hence we have $x^*\in L_E^{\infty}( S,\mu)$.
\end{proof}

Let $m\in\NN$ and $x\in L^1_E(S,\mu)$, we define $S_m$
by
\begin{align}
S_m:=\big\{s\in S\mid x(s)\in m B_E\big\}.
\end{align}
Then we have $S_m\subseteq S_{m+1}$ and $S=\bigcup_{m\geq1} S_m$.
\begin{remark}\label{BLal:3}
Assume that $x\in L^1_E (S,\mu)$.
 Then $\mu(S_m^c)\downarrow 0$ and
$\mu(S_m)\uparrow \mu(S)$ when $m\longrightarrow \infty$.
\end{remark}

The proof of Proposition~\ref{BLal:4} was inspired by that of \cite[Lemma~3.6]{BorLew1}.
\begin{proposition}
\label{BLal:4} Let $\phi:E\rightarrow\RX$ be proper, lower semicontinuous and convex.
Suppose that $\phi^*$ is differentiable on $E$, and that $x_n\weakly x$ in $L^1_E(S,\mu)$ and $I_{\phi}(x_n)\longrightarrow I_{\phi}(x)<+\infty$. Assume that
 $x(s)\in\inte\dom \phi$ almost everywhere. Then $\|x_n-x\|_1\longrightarrow 0$.
\end{proposition}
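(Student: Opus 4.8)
The plan is to localize the problem to regions where $x$ is confined to compact subsets of $\inte\dom\phi$, apply the Borwein--Lewis norm-convergence criterion (Fact~\ref{BorLewNorm:1}) there, and then control the complementary tail by uniform integrability.

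First I would exhaust the open set $\inte\dom\phi$ by an increasing sequence of compact sets, for instance $D_k:=\{z\in E\mid \|z\|\leq k,\ B(z,\tfrac1k)\subseteq\inte\dom\phi\}$, so that each $D_k$ is a bounded closed subset of $\inte\dom\phi$ and $\bigcup_{k}D_k=\inte\dom\phi$. Setting $T_k:=x^{-1}(D_k)$ gives an increasing sequence of measurable sets, and since $x(s)\in\inte\dom\phi$ almost everywhere we have $\mu(T_k)\uparrow\mu(S)$, equivalently $\mu(T_k^c)\downarrow 0$.

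On each fixed $T_k$ I would run the argument on the finite measure space $(T_k,\mu|_{T_k})$. By Lemma~\ref{BLal:2} applied with $D=D_k$, we get $\partial I_{\phi}^{T_k}(x|_{T_k})\neq\varnothing$, so $x|_{T_k}\in\dom\partial I_{\phi}^{T_k}$. By Fact~\ref{legenFa:3}, $x_n|_{T_k}\weakly x|_{T_k}$ and $I_{\phi}^{T_k}(x_n|_{T_k})\longrightarrow I_{\phi}^{T_k}(x|_{T_k})<+\infty$. Since $\phi^*$ is differentiable everywhere on $E$, Fact~\ref{BLal:1} shows $(I_{\phi}^{T_k})^*=I_{\phi^*}^{T_k}$ is Fr\'{e}chet differentiable everywhere, in particular on $\dom\partial (I_{\phi}^{T_k})^*$. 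Hence Fact~\ref{BorLewNorm:1} applies and yields $x_n|_{T_k}\longrightarrow x|_{T_k}$ in $L^1_E(T_k,\mu|_{T_k})$; that is, $\int_{T_k}\|x_n(s)-x(s)\|\,{\rm d}\mu(s)\to 0$ as $n\to\infty$, for each fixed $k$.

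It remains to bound the tail $\int_{T_k^c}\|x_n-x\|\,{\rm d}\mu$ uniformly in $n$. The sequence $(x_n)_{n\in\NN}$ is relatively weakly compact (being weakly convergent), so its weak closure $D$ is a weakly compact set containing all $x_n$ and $x$; applying Fact~\ref{Dundcom:1} (Dunford) to $D$, given $\varepsilon>0$ there is $\delta>0$ with $\int_C\|x_n\|\,{\rm d}\mu\leq\varepsilon$ and $\int_C\|x\|\,{\rm d}\mu\leq\varepsilon$ whenever $\mu(C)\leq\delta$, uniformly in $n$. Choosing $k$ with $\mu(T_k^c)\leq\delta$ gives $\int_{T_k^c}\|x_n-x\|\,{\rm d}\mu\leq 2\varepsilon$ for all $n$. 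Combining this with the previous paragraph yields $\limsup_n\|x_n-x\|_1\leq 2\varepsilon$, and letting $\varepsilon\downarrow 0$ finishes the proof. The main obstacle I anticipate is the interchange of the two limiting processes: the strong convergence from Fact~\ref{BorLewNorm:1} holds only for each fixed $k$, with no uniformity in $k$, so the tail must be made small uniformly in $n$ \emph{before} letting $n\to\infty$; this is precisely where the Dunford uniform integrability of the weakly compact family $(x_n)$ is indispensable. One must also check that the restricted functionals $I_{\phi}^{T_k}$, their conjugates, and subdifferentials behave on $(T_k,\mu|_{T_k})$ exactly as the quoted facts for $(S,\mu)$ require.
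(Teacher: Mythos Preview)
Your proof is correct and follows the same overall architecture as the paper's: localize to a region where $x$ takes values in a bounded closed subset of $\inte\dom\phi$, invoke Lemma~\ref{BLal:2} to get $x|_{T_k}\in\dom\partial I_\phi^{T_k}$, pass the weak convergence and value convergence to the restriction via Fact~\ref{legenFa:3}, apply Fact~\ref{BLal:1} and Fact~\ref{BorLewNorm:1} to obtain norm convergence on $T_k$, and handle the complement uniformly in $n$ using Dunford's uniform integrability (Fact~\ref{Dundcom:1}).

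The only real difference is in how the exhaustion of $\inte\dom\phi$ by bounded closed subsets is produced. The paper invokes Vitali's covering theorem (Corollary~\ref{CoVitaliCo}) to write $\inte\dom\phi=\bigcup_{n}B(z_n,\delta_n)$, then takes the finite unions $\bigcup_{k\le m}B(z_k,\delta_k)$ intersected with $mB_E$. Your sets $D_k=\{z\in E:\|z\|\le k,\ B(z,1/k)\subseteq\inte\dom\phi\}$ do the same job more directly: they are closed (the condition $B(z,1/k)\subseteq\inte\dom\phi$ is equivalent to $\mathrm{dist}(z,E\setminus\inte\dom\phi)\ge 1/k$), bounded, contained in $\inte\dom\phi$, increasing, and exhaust $\inte\dom\phi$. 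This bypasses the Vitali machinery entirely and is a genuine simplification; the paper's route gains nothing additional here. One small cosmetic point: rather than appealing to the weak closure, you can simply note (as the paper does) that $\{x_n:n\in\NN\}\cup\{x\}$ is itself weakly compact, which is immediate for a convergent sequence together with its limit.
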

\begin{proof}
Since $x_n\weakly x$, $D:=(x_n)_{n\in\NN}\cup\{x\}$ is weakly compact in
$L^1_E (S,\mu)$.
Let $\varepsilon>0$. By Fact~\ref{Dundcom:1},
 there exists $\delta>0$ such that
\begin{align}\label{BLal:4Eb1}
\int_C\big\|y(s)\big\| {\rm d}\mu(s)\leq\varepsilon,\quad\forall
\mu(C)\leq\delta,\,\forall y\in D.
\end{align}
Set $U:=\inte\dom \phi$. Then by Corollary~\ref{CoVitaliCo}, there exist a sequence $(z_n)_{n\in\NN}$ in $U$
and a sequence $(\delta_n)_{n\in\NN}$ in $\left[0,1\right]$
  such that
 \begin{align}\label{BLal:4E7}
U=\bigcup_{n\in\NN} B(z_n,\delta_n).
 \end{align}
 Set \begin{align}T_n:=\big\{s\in S\mid x(s)\in\bigcup^{n}_{k=1} B(z_k,\delta_k)\big\},
 \quad\forall n\in\NN. \label{BLal:4Ea1}
 \end{align}
Since $x(s)\in U$ almost everywhere on $S$, by\eqref{BLal:4E7},
 we have $\mu({T_n}^c)\downarrow 0$ and
$\mu(T_n)\uparrow \mu(S)$ when $n\longrightarrow \infty$.
Set $\widetilde{S_m}:=S_m\cap T_m$.
Then by Remark~\ref{BLal:3}, \begin{align}\label{BLal:4E5}
\mu(\widetilde{S_m}^c)=\mu(S_m^c\cup T_m^c)\downarrow 0\quad\text{and}\quad
\mu(\widetilde{S_m})\uparrow \mu(S)\quad\text{as}\quad m\longrightarrow \infty.
\end{align}
Then by \eqref{BLal:4Eb1}, there exists $N\in\NN$ such that
\begin{align}
\mu(\widetilde{S_m}^c)\leq\delta\quad\text{and}\quad
\int_{\widetilde{S_m}^c}\big\|x_n(s)-x(s) \big\|{\rm d}\mu(s)\leq2
\varepsilon,\quad\forall m\geq N, \forall n\in\NN.\label{BLal:4E1}
\end{align}
Then by Fact~\ref{legenFa:3},
\begin{align}\label{BLal:4E6}
x_n|_{\widetilde{S_m}}\weakly x|_{\widetilde{S_m}}\,\text{ in $L^1_E(\widetilde{S_m},\mu|_{\widetilde{S_m}}) $\quad and}\quad I^{\widetilde{S_m}}_{\phi}(x_n|_{\widetilde{S_m}})\longrightarrow
I^{\widetilde{S_m}}_{\phi} (x|_{\widetilde{S_m}})<+\infty.
\end{align}
By the definition of ${\widetilde{S_m}}$
and \eqref{BLal:4Ea1}, we have
\begin{align}
\big\{x(s)\mid s\in \widetilde{S_m}\big\}\subseteq mB_E\cap\big(\bigcup^{m}_{k=1} B(z_k,\delta_k)\big) .
\end{align}
By \eqref{BLal:4E7}, $mB_E\cap\big(\bigcup^{m}_{k=1} B(z_k,\delta_k)\big)$ is a  bounded closed subset of $\inte\dom\phi$.
Then by Lemma~\ref{BLal:2}, $\partial I^{\widetilde{S_m}}_{\phi} (x|_{\widetilde{S_m}})\neq\varnothing$.
Thus by Fact~\ref{BLal:1}, Fact~\ref{Intephi}, Fact~\ref{BorLewNorm:1} and \eqref{BLal:4E6}, we obtain that
\begin{align}
\int_{\widetilde{S_m}}\big\|x_n(s)-x(s)\big\|d \mu(s)\longrightarrow 0,\quad\text{as}\,\, n\longrightarrow0.\label{BLal:4E3}
\end{align}
Then we have
\begin{align}
\|x_n -x\|_1&=\int_{\widetilde{S_m}}\big\|(x_n(s))-(x(s))\big\|d \mu(s)+\int_{\widetilde{S_m}^c}\big\|(x_n(s))-(x(s))\big\|d \mu(s)\nonumber\\
&\leq\int_{\widetilde{S_m}}\big\|x_n(s)-x(s)\big\|d \mu(s)+
2\varepsilon,\quad\forall m\geq N\quad\text{(by \eqref{BLal:4E1}}.\label{BLal:4E4}
\end{align}
Taking $n\longrightarrow\infty$ in \eqref{BLal:4E4},
by \eqref{BLal:4E3}, $\limsup\|x_n -x\|_1\leq2\varepsilon$ and hence
$\|x_n-x\|_1\longrightarrow0$.
\end{proof}

We first prove a restrictive sufficient condition for strong rotundity.

\begin{theorem}
\label{BLal:5} Let $\phi:E\rightarrow\RX$ be proper, lower semicontinuous and convex with open domain.
Suppose that $\phi^*$ is differentiable on $E$. Then $I_{\phi}$ is strongly rotund on $L^1_E(S,\mu)$.
\end{theorem}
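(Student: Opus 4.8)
The plan is to verify, one at a time, the three defining properties of strong rotundity for $I_{\phi}$: strict convexity on its domain, weak compactness of its lower level sets, and the Kadec property. Two of these follow almost immediately from machinery already assembled, while the level-set compactness is the only genuinely analytic step.

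For strict convexity I would first translate the hypothesis on $\phi^*$. Since $\phi^*$ is differentiable on all of $E$ it is in particular essentially smooth, so by Fact~\ref{phiEmth} the function $\phi$ is essentially strictly convex. Because $\dom\phi$ is open we have $\dom\phi=\inte\dom\phi=\dom\partial\phi$ (the middle set always lies between $\inte\dom\phi$ and $\dom\phi$ for a convex function on $E$), and this set is convex; hence Fact~\ref{legenFa:1}\ref{legenFa:1E3} gives that $\phi$ is strictly convex on $\dom\phi$. Fact~\ref{legenFa:2} then upgrades this to strict convexity of $I_{\phi}$ on $\dom I_{\phi}$. For the Kadec property I would simply invoke Proposition~\ref{BLal:4}: if $x_n\weakly x\in\dom I_{\phi}$ with $I_{\phi}(x_n)\longrightarrow I_{\phi}(x)$, then $I_{\phi}(x)<+\infty$ forces $\phi(x(s))<+\infty$, i.e.\ $x(s)\in\dom\phi=\inte\dom\phi$, for almost every $s$. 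The open-domain hypothesis is precisely what makes this a.e.\ membership in $\inte\dom\phi$ automatic, so Proposition~\ref{BLal:4} applies and yields $\|x_n-x\|_1\longrightarrow0$, which is the Kadec implication.

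The main work, and the step I expect to be the obstacle, is weak compactness of the level sets $L_{\lambda}:=\big\{x\in L^1_E(S,\mu)\mid I_{\phi}(x)\le\lambda\big\}$. The key observation is that differentiability of $\phi^*$ on $E$ gives $\dom\phi^*=E$, so $\phi^*$ is finite and continuous, hence bounded on every ball: set $M_R:=\sup_{\|u\|\le R}\phi^*(u)<+\infty$ and $c:=\phi^*(0)$. Fenchel--Young then produces the family of minorants $\phi(v)\ge R\|v\|-M_R$ for all $v\in E$ and all $R>0$, together with the uniform lower bound $\phi\ge -c$. Integrating the bound for a fixed $R$ (say $R=1$) shows $\|x\|_1\le\lambda+M_1\mu(S)$ for $x\in L_{\lambda}$, so $L_{\lambda}$ is norm bounded. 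For uniform integrability, given $\varepsilon>0$ I would choose $R$ large so that $(\lambda+c\mu(S))/R<\varepsilon/2$; then for $x\in L_{\lambda}$ and measurable $C$, using $\|x(s)\|\le\tfrac1R(\phi(x(s))+M_R)$ and the nonnegativity of $\phi+c$ to replace $\int_C$ by $\int_S$, one gets $\int_C\|x(s)\|\,d\mu\le\tfrac{\lambda+c\mu(S)}{R}+\tfrac{(M_R-c)\mu(C)}{R}$, which is below $\varepsilon$ once $\mu(C)$ is small. This uniform-integrability estimate is the delicate point: one must trade the choice of $R$ (shrinking the total-mass term) against the smallness of $\mu(C)$ (shrinking the residual term), which is a hand version of the de la Vall\'ee-Poussin criterion.

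Having shown $L_{\lambda}$ is bounded and uniformly integrable, the Dunford--Pettis theorem (the converse direction to Fact~\ref{Dundcom:1}) makes $L_{\lambda}$ relatively weakly compact, and since $I_{\phi}$ is weakly lower semicontinuous by Remark~\ref{reInteRoc} the set $L_{\lambda}$ is weakly closed, hence weakly compact. Collecting strict convexity on the domain, weak compactness of the lower level sets, and the Kadec property, I conclude that $I_{\phi}$ is strongly rotund on $L^1_E(S,\mu)$.
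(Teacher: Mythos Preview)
Your proof is correct, and for strict convexity and the Kadec property it matches the paper's argument essentially line for line: both use Fact~\ref{phiEmth} to get essential strict convexity of $\phi$, the openness of $\dom\phi$ to deduce $\dom\partial\phi=\dom\phi$ and hence genuine strict convexity, then Fact~\ref{legenFa:2}; and both use Proposition~\ref{BLal:4} for Kadec after noting that $x\in\dom I_\phi$ forces $x(s)\in\dom\phi=\inte\dom\phi$ almost everywhere.

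The one genuine difference is the step you flag as ``the main work'': weak compactness of the level sets. The paper disposes of this in a single citation to \cite[Corollary~2B]{Rock71PJM}, which asserts exactly that $\dom\phi^*=E$ implies weakly compact lower level sets for $I_\phi$. You instead reprove that corollary by hand via the Fenchel--Young minorants $\phi(v)\ge R\|v\|-M_R$ and $\phi\ge -\phi^*(0)$, uniform integrability, Dunford--Pettis, and weak closedness from Remark~\ref{reInteRoc}. Your computation is correct (in particular your two-term bound $\int_C\|x\|\le (\lambda+c\mu(S))/R+(M_R-c)\mu(C)/R$ is right, and $M_R\ge c$ makes the second coefficient nonnegative). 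So your route is more self-contained and makes explicit the de~la~Vall\'ee-Poussin mechanism hidden in Rockafellar's result, at the cost of a paragraph that the paper avoids by citation. One small expository slip: in your parenthetical ``the middle set always lies between $\inte\dom\phi$ and $\dom\phi$'' you presumably mean $\dom\partial\phi$, not the literal middle term of your displayed chain; the intended inclusion $\inte\dom\phi\subseteq\dom\partial\phi\subseteq\dom\phi$ is of course what makes the argument work.
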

\begin{proof}
By Fact~\ref{phiEmth}, $\phi$ is essentially strictly convex. Since $\dom \phi$ is open, \cite[Proposition~3.3 and Proposition~1.11]{ph} implies that $\dom\partial\phi=\dom\phi$. Hence $\phi$ is strictly convex on $\dom\phi$.
Then by Fact~\ref{legenFa:2}, $I_{\phi}$ is strictly convex on its domain.
Since $\dom \phi^*=E$, by \cite[Corollary~2B]{Rock71PJM}, $I_{\phi}$ has weakly compact lower level sets.

Now we show $I_{\phi}$ has the Kadec property.
Let $x_n\weakly x\in\dom I_{\phi}$ in $L^1_E(S,\mu)$ and $I_{\phi}(x_n)\longrightarrow I_{\phi}(x)$. Since
$x\in \dom I_{\phi}$, $x(s)\in\dom \phi$ for almost all $s\in S$.
Since $\dom\phi=\inte\dom\phi$,
 $x(s)\in\inte\dom \phi$ almost everywhere. Then by Proposition~\ref{BLal:4},
  $\|x_n-x\|_1\longrightarrow 0$.

  Hence $I_{\phi}$ has the Kadec property and consequently $I_{\phi}$ is strongly rotund.
\end{proof}

 When the domain of $\phi$ is not open we have more work to do:

\begin{theorem}\label{Tsge2:1}
Let $\phi_i:\RR\rightarrow\RX$ be proper, lower semicontinuous and convex with $\inte\dom\phi_i\neq\varnothing$ for every $i=1,2,\cdots, d$.
Suppose that $\phi_i^*$ is differentiable on $\RR$  for every $i=1,2,\cdots, d$.
Let $\phi: E\rightarrow\RX$ be defined by $z:=(z_n)\in E\mapsto\sum_{i=1}^{d}\phi_i(z_i)$.
Then $I_{\phi}$ is strongly rotund on $L^1_E(S,\mu)$.
\end{theorem}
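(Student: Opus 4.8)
The plan is to verify directly the three defining properties of strong rotundity — strict convexity on the domain, weak compactness of the lower level sets, and the Kadec property — exploiting the separable structure to reduce everything to the one--dimensional components, and isolating the boundary of $\dom\phi$ as the only genuinely new difficulty compared with Theorem~\ref{BLal:5}.

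First I would record the separable structure. Since the conjugate of a separable sum is the separable sum of the conjugates, $\phi^*(w)=\sum_{i=1}^d\phi_i^*(w_i)$ is differentiable on all of $E$ and $\dom\phi^*=E$, so \cite[Corollary~2B]{Rock71PJM} yields weakly compact lower level sets exactly as in Theorem~\ref{BLal:5}. For strict convexity, each $\phi_i^*$ is essentially smooth (differentiable with full domain), whence by Fact~\ref{phiEmth} each $\phi_i$ is essentially strictly convex, hence strictly convex on $\inte\dom\phi_i\subseteq\dom\partial\phi_i$ by Fact~\ref{legenFa:1}\ref{legenFa:1E3}; a one--dimensional interval argument (equality in the convexity inequality would force $\phi_i$ to be affine on an open subinterval of $\inte\dom\phi_i$) promotes this to strict convexity on all of $\dom\phi_i$. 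Summing coordinatewise, $\phi$ is strictly convex on $\dom\phi=\prod_i\dom\phi_i$, and Fact~\ref{legenFa:2} then gives strict convexity of $I_\phi$ on its domain.

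The substance is the Kadec property. Given $x_n\weakly x\in\dom I_\phi$ with $I_\phi(x_n)\longrightarrow I_\phi(x)$, I would discard the finitely many $n$ with $I_\phi(x_n)=+\infty$ and pass to coordinates: writing $x_n=(x_{n,1},\dots,x_{n,d})$ one has $x_{n,i}\weakly x_i$ in $L^1_\RR(S,\mu)$ and $I_\phi(x_n)=\sum_i I_{\phi_i}(x_{n,i})$. Weak lower semicontinuity (Remark~\ref{reInteRoc}) gives $\liminf_n I_{\phi_i}(x_{n,i})\geq I_{\phi_i}(x_i)$ for each $i$, and since the finite sum converges to $\sum_i I_{\phi_i}(x_i)$ with each summand finite, in fact $I_{\phi_i}(x_{n,i})\longrightarrow I_{\phi_i}(x_i)<+\infty$ for every $i$. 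As $\|x_n-x\|_1\leq\sum_i\|x_{n,i}-x_i\|_1$, it suffices to treat a single coordinate, i.e.\ the case $d=1$, which I would handle by splitting $S$ according to where $x$ lands: writing $\inte\dom\phi=\left]a,b\right[$, set $S^{\mathrm{int}}:=\{x(s)\in\left]a,b\right[\}$ together with $S_a:=\{x(s)=a\}$ and $S_b:=\{x(s)=b\}$ (the latter two only when the endpoints are finite and lie in $\dom\phi$), so that $S=S^{\mathrm{int}}\cup S_a\cup S_b$ up to a null set.

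On $S^{\mathrm{int}}$, Fact~\ref{legenFa:3} transfers weak convergence and the convergence $I^{S^{\mathrm{int}}}_\phi(x_n|_{S^{\mathrm{int}}})\longrightarrow I^{S^{\mathrm{int}}}_\phi(x|_{S^{\mathrm{int}}})<+\infty$ to the restriction, and since $x|_{S^{\mathrm{int}}}(s)\in\inte\dom\phi$ almost everywhere, Proposition~\ref{BLal:4} delivers $\|x_n|_{S^{\mathrm{int}}}-x|_{S^{\mathrm{int}}}\|_1\longrightarrow 0$. The boundary pieces are precisely where the open--domain proof fails, and I expect them to be the main obstacle; the resolution I would use is a one--sided sign argument rather than any quantitative estimate. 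Indeed, for large $n$ one has $x_n(s)\in\dom\phi$, hence $x_n(s)\geq a$, almost everywhere on $S_a$; testing the weak convergence $x_n|_{S_a}\weakly a$ against the indicator of $S_a$ gives $\int_{S_a}\big(x_n(s)-a\big)\,{\rm d}\mu(s)\longrightarrow 0$, and because the integrand is nonnegative this is exactly $\|x_n|_{S_a}-x|_{S_a}\|_1\longrightarrow 0$; the set $S_b$ is symmetric. Summing the three pieces and then over the coordinates yields $\|x_n-x\|_1\longrightarrow 0$, so $I_\phi$ has the Kadec property and is therefore strongly rotund.
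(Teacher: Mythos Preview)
Your proof is correct and follows essentially the same route as the paper: strict convexity via Fact~\ref{phiEmth} and Fact~\ref{legenFa:2}, weak compactness of level sets via \cite[Corollary~2B]{Rock71PJM}, and the Kadec property via the interior/boundary decomposition of $S$ with Proposition~\ref{BLal:4} on the interior piece and the one--sided sign argument on the boundary pieces. The only difference is an ordering choice: you first isolate a single coordinate globally (deducing $I_{\phi_i}(x_{n,i})\to I_{\phi_i}(x_i)$ on all of $S$ from the finite sum and weak lower semicontinuity) and then split $S$, whereas the paper splits $S$ first and performs the coordinate isolation only on $S_{\inte}$ via the auxiliary function $\psi=\sum_{j\neq i}\phi_j$; your ordering is a mild streamlining but not a different argument.
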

\begin{proof}
We have $\phi$ is proper lower semicontinuous and convex. Let $i\in\{1,\cdots,d\}$. By Fact~\ref{phiEmth}, $\phi_i$ is essentially strictly convex. Then
$\phi_i$ is strictly convex on $\inte\dom\phi_i$.
Hence $\phi_i$ is strictly convex on its domain, so is $\phi$.
Then by Fact~\ref{legenFa:2}, $I_{\phi}$ is strictly convex on its domain.
By the assumption, $\phi^*=\sum_{i=1}^{d}\phi_i^*$, hence $\phi^*$ is differentiable everywhere on $E$. Then by \cite[Corollary~2B]{Rock71PJM}, $I_{\phi}$ has weakly compact lower level sets.

Now we show $I_{\phi}$ has the Kadec property.
Let $x_n\weakly x\in\dom I_{\phi}$ in $L^1_E(S,\mu)$ and $I_{\phi}(x_n)\longrightarrow I_{\phi}(x)$. Since $x\in\dom I_{\phi}$, $x(s)\in \dom\phi$ for all most $s\in S$. We can and do suppose that $x(s)\in\dom \phi$ for all $s\in S$.

We let $x(s):= \big(x_1 (s),\cdots, x_d(s)\big)$ and $x_n(s):= \big(x_{n,1} (s),\cdots, x_{n,d}(s)\big)$.
Now we claim that
\begin{align}
\int_S\big|x_{n,i}(s)-x_i(s)\big|{\rm
d}\mu(s)\longrightarrow0,\quad\forall
i\in\{1,\cdots,d\}.\label{Tsge2:e1}
\end{align}
Fix $i\in\{1,\cdots,d\}$. Since $\inte\dom\phi_i\neq\varnothing$, there exist $\alpha\in\RR\cup \{-\infty\}$ and $\beta\in\RR\cup\{+\infty\}$ such that $\alpha<\beta$ and $\inte\dom\phi_i=\left]\alpha,\beta\right[$.
 We set
\begin{align*}
S_{\alpha}&:=\big\{s\in S\mid x_{i}(s)=\alpha\big\}\\
S_{\beta}&:=\big\{s\in S\mid x_{i}(s)=\beta\big\}\\
S_{\inte}&:=\big\{s\in S\mid x_{i}(s)\in\inte\dom \phi_i\big\}.
\end{align*}
Then $S_{\alpha}, S_{\beta}$ and $S_{\inte}$ are measurable sets.
Given $y(s)=(y_i(s))_{i=1}^{d}\in L^1_E(S,\mu)$. Set $\widetilde{y}$ by
\begin{align*}
\widetilde{y}(s)&:=\big(y_{1}(s),\cdots,y_{i -1}(s), y_{i+1}(s),\cdots, y_d(s)\big).
\end{align*}
Now we show that \begin{align}
\widetilde{x_n}\weakly \widetilde{x}\quad\text{in}\, L^1_{R^{d-1}}(S,\mu).\label{BoShE:ta4}
\end{align}

Let $v^*(s)\in L^{\infty}_{R^{d-1}}(S,\mu)$.
For convenience, we write \begin{align*}v^*(s) =\big(v_{1}(s),\cdots,v_{i -1}(s), v_{i+1}(s),\cdots, v_d(s)\big).
\end{align*}
Then we define ${w^*}$ by \begin{align*}w^*(s):= \big(v_{1}(s),\cdots,v_{i -1}(s),0, v_{i+1}(s), \cdots,v_d(s)\big).
 \end{align*}
 Then $w^*\in L^{\infty}_E(S,\mu)$ and
$\langle \widetilde{x_n}, v^*\rangle=\langle x_n, w^*\rangle\longrightarrow
\langle x, w^*\rangle=\langle \widetilde{x}, v^*\rangle$.
Hence $\widetilde{x_n}\weakly  \widetilde{x}$ and thus \eqref{BoShE:ta4} holds.

Similarly, we have
 \begin{align}
x_{n,i}\weakly x_{i}\quad\text{in}\, L^1_{R}(S,\mu).\label{BoShE:ta5}
\end{align}

Then by \eqref{BoShE:ta4}, \eqref{BoShE:ta5} and Fact~\ref{legenFa:3},
\begin{align}
\widetilde{x_n}|_{S_\gamma}\weakly  \widetilde{x}|_{S_\gamma}\quad\text{and}\quad
x_{n,i}|_{S_\gamma}\weakly  x_{i}|_{S_\gamma},\quad \gamma\in\{\alpha,\beta,\inte\}.\label{BoShE:ta6}
\end{align}
Since $I_{\phi}(x_n)\longrightarrow I_{\phi}(x)<+\infty$,  we have $I_{\phi}(x_n)<+\infty$ and hence $x_n(s)\in\dom \phi$ for all
almost $s\in S$ when $n$ is larger enough. Thus, we can and do assume that $x_{n,i}(s)\in\dom\phi_i$ for all $n\in\NN, s\in S$.
Since $S=S_{\alpha}\cup S_{\beta}\cup S_{\inte}$,
we have
\begin{align}
&\int_{S}\big|x_{n,i}(s)-x_i(s)\big|{\rm d}\mu(s)\nonumber\\
&=\int_{S_{\alpha}}\big|x_{n,i}(s)-x_i(s)\big|{\rm
d}\mu(s)+\int_{S_{\beta}}\big|x_{n,i}(s)-x_i(s)\big|{\rm d}\mu(s)
+\int_{S_{\inte}}\big|x_{n,i}(s)-x_i(s)\big|{\rm d}\mu(s)\nonumber\\
&=\int_{S_{\alpha}}\big(x_{n,i}(s)-x_i(s)\big){\rm
d}\mu(s)+\int_{S_{\beta}}\big(x_i(s)-x_{n,i}(s)\big){\rm d}\mu(s)
+\int_{S_{\inte}}\big|x_{n,i}(s)-x_i(s)\big|{\rm
d}\mu(s).\label{BoShE:ta07}
\end{align}
By \eqref{BoShE:ta6},
\begin{align}
\int_{S_{\alpha}}\big(x_{n,i}(s)-x_i(s)\big){\rm
d}\mu(s)+\int_{S_{\beta}}\big(x_i(s)-x_{n,i}(s)\big){\rm
d}\mu(s)\longrightarrow0. \label{BoShE:ta7}
\end{align}
Now we show that
\begin{align}
\int_{S_{\inte}}\big|x_{n,i}(s)-x_i(s)\big|{\rm d}\mu(s)
\longrightarrow0.\label{BoShE:ta8}
\end{align}
If $\mu(S_{\inte})=0$, clearly, \eqref{BoShE:ta8} holds. Now we assume that
$\mu(S_{\inte})\neq0$.
We define $\psi: \RR^{d-1}\rightarrow\RX$ by
$z:=(z_1,z_2,\cdots,z_{i-1},z_{i+1},\cdots,z_d)\mapsto\sum_{j\neq i}\phi_j(z_j)$.
Then by Fact~\ref{Intephi}, $I^{S_{\inte}}_{\psi}$
and $I^{S_{\inte}}_{\phi_i}$ are proper lower semicontinuous and convex.
Then by Remark~\ref{reInteRoc} and \eqref{BoShE:ta6},
\begin{align}\liminf \int_{S_{\inte}}\psi\big(\widetilde{x_n}|_{S_{\inte}}(s)\big){\rm d}\mu(s)=
\liminf I^{S_{\inte}}_{\psi}(\widetilde{x_n}|_{S_{\inte}})
&\geq I^{S_{\inte}}_{\psi}(\widetilde{x}|_{S_{\inte}}),\nonumber\\
\liminf I^{S_{\inte}}_{\phi_i}(x_{n,i}|_{S_{\inte}})
&\geq I^{S_{\inte}}_{\phi_i}(x_{i}|_{S_{\inte}}).\label{BoShE:ta09}
\end{align}
We first show that
\begin{align}
\int_{S_{\inte}}\phi_i\big(x_{n,i}|_{S_{\inte}}(s)\big){\rm
d}\mu(s)\longrightarrow
\int_{S_{\inte}}\phi_i\big(x_i|_{S_{\inte}}(s)\big){\rm
d}\mu(s)<+\infty. \label{BoShE:ta9}
\end{align}
\allowdisplaybreaks
By  Fact~\ref{legenFa:3}, we have
\begin{align}
I^{S_{\inte}}_{\phi}(x_n|_{S_{\inte}})\longrightarrow I^{S_{\inte}}_{\phi}(x|_{S_{\inte}})<+\infty.
\label{Tsge2:e3}
\end{align}
Then we have
\begin{align*}
&\limsup I^{S_{\inte}}_{\phi_i}(x_{n,i}|_{S_{\inte}})=\limsup
\int_{S_{\inte}}\phi_i\big(x_{n,i}|_{S_{\inte}}(s)\big){\rm d}\mu(s)
=\limsup \big(I^{S_{\inte}}_{\phi}(x_n|_{S_{\inte}})-I^{S_{\inte}}_{\psi}(\widetilde{x_n}|_{S_{\inte}})\big)\\
&=\lim I^{S_{\inte}}_{\phi}(x_n|_{S_{\inte}})-\liminf I^{S_{\inte}}_{\psi}(\widetilde{x_n}|_{S_{\inte}}))\\
&\leq I^{S_{\inte}}_{\phi}(x|_{S_{\inte}})- I^{S_{\inte}}_{\psi}(\widetilde{x}|_{S_{\inte}})\quad\text{(by \eqref{Tsge2:e3} and \eqref{BoShE:ta09})}\\
&= I_{\phi_i}^{S_{\inte}}(x_i|_{S_{\inte}})<+\infty\quad\text{(since $I^{S_{\inte}}_{\phi}(x|_{S_{\inte}})<+\infty$ and $I_{\psi}^{S_{\inte}}(\widetilde{x}|_{S_{\inte}})>-\infty$ by \eqref{Tsge2:e3} and Fact~\ref{Intephi})}.
\end{align*}
Then combining with \eqref{BoShE:ta09}, we have
$\limsup I^{S_{\inte}}_{\phi_i}(x_{n,i}|_{S_{\inte}})\leq I^{S_{\inte}}_{\phi_i}(x_{i}|_{S_{\inte}})\leq
\liminf I^{S_{\inte}}_{\phi_i}(x_{n,i}|_{S_{\inte}})$ and thus \eqref{BoShE:ta9} holds.

By \eqref{BoShE:ta6}, \eqref{BoShE:ta9} and
Proposition~\ref{BLal:4}, we have
$\int_{S_{\inte}}\big|x_{n,i}(s)-x_i(s)\big|{\rm d}\mu(s)
\longrightarrow0$ and hence \eqref{BoShE:ta8} holds.

Combining  \eqref{BoShE:ta8}, \eqref{BoShE:ta7} and
\eqref{BoShE:ta07}, we have $\int_{S}|x_{n,i}(s)-x_i(s)|{\rm
d}\mu(s)\longrightarrow 0$ and hence \eqref{Tsge2:e1} holds.

Then by \eqref{Tsge2:e1},
\begin{align*}
&\|x_n- x\|_1\leq \int_S\sum_{i=1}^{d} \big|x_{n,i}(s)-
x_i(s)\big|{\rm d}\mu(s)\longrightarrow0.
\end{align*}
Hence $x_n\longrightarrow x$ and hence $I_\phi$ has the Kadec property.

Combining the above
results, $I_\phi$ is strongly rotund  in $L^1_E(S,\mu)$.
\end{proof}

\begin{remark}It is noted in \cite{BorLew1} that strongly rotund functions with points of continuity can only exist on reflexive spaces. Moreover, strongly rotund integral functions on $L^1_E(S,\mu)$  are a useful surrogate for strongly rotund renorms which always exist in the reflexive setting.\qede\end{remark}

\section{Examples and applications}\label{s:ExA}

Below we use the convention that $0\log0=0$.

\begin{example}\label{ExmP:E1}
By applying Theorem~\ref{BLal:5} and Theorem~\ref{Tsge2:1}, we can obtain many functions
$\phi$ such that $I_{\phi}$ is strongly rotund. Seven examples  follow
\begin{enumerate}
\item\label{BoShE:1}
Let $f:\RR\rightarrow\RX$ be defined by
\begin{align*}
f(x)=\begin{cases}x\log x-x,\,&\text{if}\, x\geq0;\\
+\infty,\,&\text{otherwise}
\end{cases}\quad \forall x\in\RR.
\end{align*}
Let $\phi:E\rightarrow\RX$ be defined by
\begin{align*}\phi(x):=\sum_{i=1}^{d}f(x_i),\quad \forall x=(x_n)\in E.
\end{align*}
Then  $I_\phi$ is the \emph{Boltzmann-Shannon entropy}.

\item \label{BoShE:2}
Let $f:\RR\rightarrow\RX$ be defined by
\begin{align*}
f(x)=\begin{cases}x\log x+(1-x)\log(1-x),\,&\text{if}\, 0\leq x\leq1;\\
+\infty,\,&\text{otherwise}
\end{cases}\quad \forall x\in\RR.
\end{align*}
Let $\phi:E\rightarrow\RX$ be defined by
\begin{align*}\phi(x):=\sum_{i=1}^{d}f(x_i),\quad \forall x=(x_n)\in E.
\end{align*}
Then $I_\phi$ is the \emph{Fermi-Dirac entropy}

\item \label{Sephfun:1}$\phi (x)=\tfrac{1}{p}\|x\|^p,\quad \forall x\in E,\quad\text{where $p>1$}$.
\item \label{Sephfun:2}$\phi(x)=\begin{cases}\sum_{i=1}^{d}-\log(\cos x_i),\,&\text{if}\,
x\in\left]-\tfrac{\pi}{2}, \tfrac{\pi}{2}\right[\times\cdots\times\left]-\tfrac{\pi}{2}, \tfrac{\pi}{2}\right[\\
+\infty,&\text{otherwise}
\end{cases}
\quad\forall x=(x_n)\in E$.
\item \label{Sephfun:3}$\phi (x)=\sum_{i=1}^{d}\cosh x_i,\quad \forall x=(x_n)\in E $.
\item \label{Sephfun:4}$\phi (x)=\begin{cases}
\sum_{i=1}^{d}\big(x_i\tanh^{-1}x_i+\tfrac{1}{2}\log(1-x^2_i)\big),
\,&\text{if}\,
|x_i|<1,\, \forall i;\\
+\infty,&\text{otherwise}
\end{cases}
\quad \forall x=(x_n)\in E $.

\item \label{Sephfun:5}$\phi (x)=\begin{cases}-\tfrac{1}{1-\|x\|^2},\quad&\text{if}\,\|x\|<1;\\
+\infty,&\text{otherwise}
\end{cases}
\quad \forall x\in E $.

\end{enumerate}
\end{example}

\begin{proof}
\ref{BoShE:1}:
Clearly,
$f$ is proper lower semicontinuous and  convex.
By \cite[Table~2.1, pp.~45]{BorVan}, $f^*(x)=\exp(x),\forall x\in\RR$.
Then directly apply Theorem~\ref{Tsge2:1}.

\ref{BoShE:1}:
Clearly,
$f$ is proper lower semicontinuous and  convex.
By \cite[Table~2.1, pp.~45]{BorVan}, $f^*(x)=\log\big(1+\exp(x)\big),\forall x\in\RR$.
Then directly apply Theorem~\ref{Tsge2:1}.

\ref{Sephfun:1}:
Clearly,
$\phi$ is continuous and  convex with full domain.
We have $\phi^*=\tfrac{1}{q}\|\cdot\|^q$, where $\tfrac{1}{q}+\tfrac{1}{p}=1$ and
$(\phi^*)'=(\|\cdot\|)^{q-2}\small\circ\Id$. Hence $\phi^*$ is differentiable everywhere on $E$.
Then directly apply Theorem~\ref{BLal:5}.

\ref{Sephfun:2}:
Let $f(x):=\begin{cases}-\log(\cos x),\,&\text{if}\,
x\in\left]-\tfrac{\pi}{2}, \tfrac{\pi}{2}\right[\\
+\infty,&\text{otherwise}
\end{cases}$.
By \cite[Table~2.1, pp.~45]{BorVan}, we have $f$ is proper lower semicontinuous and convex,
and
$f^*(x)=x\tan^{-1}x-\tfrac{1}{2}\log(1+x^2)\big), \forall x\in\RR$.
Hence $f^*$ is differentiable everywhere on $\RR$.
Then directly apply Theorem~\ref{Tsge2:1}.

\ref{Sephfun:3}:
Let $f(x):=\cosh (x)$.
By \cite[Table~2.1, pp.~45]{BorVan}, we have $f$ is continuous and convex, and
 $f^*(x)=x\sinh^{-1}x-\sqrt{1+x^2}, \forall x\in X$.
Hence $f^*$ is differentiable everywhere on $\RR$.
Then directly apply Theorem~\ref{Tsge2:1}.

\ref{Sephfun:4}:
Let $f(x):=\begin{cases}
x\tanh^{-1}x+\tfrac{1}{2}\log(1-x^2)\big),
\,&\text{if}\,
|x|<1;\\
+\infty,&\text{otherwise}
\end{cases}$.
By \cite[Table~2.1, pp.~45]{BorVan}, we have $f$ is proper lower semicontinuous and convex,
and
$f^*(x)=\log(\cosh x)$.
Thus $f^*$ is differentiable everywhere on $\RR$.
Then directly apply Theorem~\ref{Tsge2:1}.

\ref{Sephfun:5}: Clearly, $\dom\phi$ is open. By \cite[Example~6.4]{BBC1}, $\phi$ is proper lower semicontinuous and convex, and
$\phi^*$ is differentiable everywhere on $E$.
Then directly apply Theorem~\ref{BLal:5}.
\end{proof}

\begin{example}
Let $(a^*_n)_{n\in\NN}$ be a sequence in $L^{\infty}_E(S,\mu)$ and let $(b_n)_{n\in\NN}$ be a sequence  in $\RR$. Let $\phi:X\rightarrow\RX$ be one of the functions given in Example~\ref{ExmP:E1}.

 We consider the following optimization problems (See \cite[page~196]{BorLew2}.).
\begin{align*}
(P_n)&\qquad \qquad\qquad\qquad \begin{cases}V(P_n):=&\inf I_{\phi}(x)\\
\text{subject to}\quad& \langle a^*_i, x\rangle= b_i,\quad i=1,\cdots,n\\
&x\in L^1_E(S,\mu)
\end{cases}\\
(P_{\infty})&\qquad\qquad \qquad\qquad
\begin{cases}V(P_{\infty}):=&\inf I_{\phi}(x)\\
\text{subject to}\quad& \langle a^*_i, x\rangle= b_i,\quad i=1,\cdots,n,n+1,\cdots\\
&x\in L^1_E(S,\mu).
\end{cases}
\end{align*}Then we have $V(P_n)\longrightarrow V(P_{\infty})$.
If, moreover,  $V(P_{\infty})<+\infty$, then
$(P_n)$ and $(P_{\infty})$ respectively have unique optimal solutions with $x_n$ and $x_{\infty}$, and $x_n\longrightarrow x_{\infty}$. \qede
\end{example}

\begin{proof}
Set \begin{align*}C_n:&=\big\{x\in L^1_E(S,\mu)\mid \langle a^*_i, x\rangle= b_i,\quad i=1,\cdots,n\big\}\\
C_{\infty}:&=\big\{x\in L^1_E(S,\mu)\mid \langle a^*_i, x\rangle= b_i,\quad i=1,\cdots,n,n+1,\cdots\big\}.
\end{align*}
Then we have $C_1\supseteq C_2\supseteq\ldots\supseteq C_n \supseteq\ldots$. Thus,
$\overline{\lim}^{\w}C_n\subseteq C_{\infty}$ and $C_{\infty}=\bigcap_{n\geq1} C_n\subseteq
\bigcup_{m\geq1}\bigcap_{n\geq m} C_n$. We finish with a direct application of Example~\ref{ExmP:E1} and Fact~\ref{BLal:4a}.
\end{proof}

We  next revisit  a function $\phi$ given in \cite{BorLew3}
such that $I_\phi$ is not strongly rotund but $\phi$ is everywhere strictly convex.
\begin{example}[Borwein and Lewis]\label{lorm:1}
Let $\phi(x):=\begin{cases}
-\log x,
\,&\text{if}\,
x>0;\\
+\infty,&\text{otherwise}
\end{cases}, \forall x\in\RR$. Let $S=\left[0,1\right]$  and $\mu$ be Lebesgue measure.

Then $I_\phi$ is the \emph{Burg entropy}, and
$\phi^*(x)=\begin{cases}
-1-\log(-x),
\,&\text{if}\,
x<0;\\
+\infty,&\text{otherwise}
\end{cases}, \forall x\in\RR$.
 However,  $I_\phi$ does not have weakly compact lower
level sets (See \cite[page~258]{BorLew3}.). Hence $I_\phi$ is not strongly rotund. \qede
\end{example}

\section{Watson integral and Burg entropy nonattainment}\label{sec:burg}
Let $S=\left[0,1\right]\times \left[0,1\right]\times\left[0,1\right]$  and $\mu$ be Lebesgue measure,
 and let $\phi$ be defined as in Example~\ref{lorm:1}.
Consider the
perturbed \emph{Burg entropy}\, minimization problem
\begin{align*} \begin{cases} &\inf
I_{\phi}(x)\\
\text{subject to}\quad&
\int_S x(s) {\rm d}\mu(s)=1\\
&\int_S x(s)\cos\left(2\pi s_k\right){\rm d}\mu(s)=\alpha,\quad k=1,2,3\\
&x\in L^1_{\RR}(S, \mu),
\end{cases}
\end{align*}
where $s:=(s_1, s_2, s_3)$ and $d \mu (s):=d s_1 d s_2 d s_3$.
Then the above  problem is equivalent to the following.
\begin{align*} \hspace*{-1.52pc} &v\left(\alpha \right) :=
\sup_{0\leq p\in L^1_{\RR}(S,\mu)}  \bigg\{\int_S \log \left( p(x_1,x_2,x_3)\right)  \bigg| \int_S p(x_1,x_2,x_3) dx_1dx_2
dx_3=  1,\\ \,\hfill &\mbox{and for}\ \,
k=1,2,3, \, \int_S p(x_1,x_2,x_3)\cos
\left(2\pi x_k\right) d x_1dx_2 dx_3 =\alpha \bigg\}, 
\end{align*}
maximizing the log of a density $p$ with given mean, and with the
first three cosine moments fixed at  a parameter value  $0 \le
\alpha <1$.
 It transpires that there is a parameter value
$\overline \alpha$ such that below and at that value $v(\alpha)$
is attained, while above it is finite but unattained. This is
interesting, because:

 The general method---maximizing $\int_
S\log\left(p(s)\right)\,{\rm d}\mu(s)$  subject to a finite number
of trigonometric moments---is frequently used.  In one or two
dimensions, such spectral problems are always attained when
feasible.

There is no easy way to see that this problem qualitatively
changes at $\overline \alpha$, (by \cite[Eqs.~(5.8)\&(5.10)]{borweinlewis1993})  but we can get an idea by
considering
$$\overline{p}\left(x_1,x_2,x_3\right)=\frac{1/W_1}{3-\sum_1^3\cos\left(2\pi
x_i\right)},$$ and checking that this is feasible for
$$\overline \alpha  = 1-1/(3W_1) \approx 0.340537329550999142833$$
in terms of the first Watson integral, $W_1:=\int_{-\pi}^{\pi}\int_{-\pi}^{\pi}\int_{-\pi}^{\pi}\!\! \tfrac{1}{3-\cos(x_1)-\cos(x_2)-\cos(x_3)} d x_1dx_2 dx_3 $ (See \cite[Item~20, page~117]{BBG}
  and \cite{JZ} for more information about
$W_1$.). By using
Fenchel duality \cite{BorVan} one can show that this $\overline p$ is
optimal.

 Indeed, for all $\alpha \ge 0$ the only possible optimal
solution is of the form
$$\overline{p}_\alpha\left(x_1,x_2,x_3\right)
=\frac{1}{\lambda_\alpha^0-\sum_1^3 \lambda_\alpha^i
\cos\left(2\pi x_i\right)},$$ for  some real numbers
$\lambda_\alpha^i$.  Note that we have four coefficients to
determine; using the four constraints we can solve for them.
Let $W_1(w)$ be the generalized Watson integral, i.e.,
$W_1(w):=\int_{-\pi}^{\pi}\int_{-\pi}^{\pi}\int_{-\pi}^{\pi}\!\! \tfrac{1}{3-w\big(\cos(x_1)+\cos(x_2)+\cos(x_3)\big)} d x_1dx_2 dx_3$ (See \cite[Item~21(e), page~120]{BBG} and \cite{JZ} for more information about
$W_1(w)$.).

For
$0 \le \alpha \le \overline{\alpha}$, the precise form is
parameterized by  the generalized Watson integral:
$$\overline{p}_\alpha\left(x_1,x_2,x_3\right)
=\frac{1/W_1(w)}{3-\sum_1^3 w\cos\left(2\pi x_i\right)},$$ and
$\alpha=1-1/(3 W_1(w))$, as $w$ ranges from zero to one.

Note
also that $W_1(w)=\pi^3\,\int_0^\infty\, I_0^3(w\,t)\,e^{-3t}\,dt$
allows one to quickly obtain $w$ from $\alpha$ numerically.  For
$\alpha> \overline{\alpha}$, no feasible reciprocal polynomial can
stay positive.  Full details are given in \cite[Example~4, pp.~264-265]{borweinlewis1993}.

\section{Applications of Visintin's Theorem}\label{ApVisin}

Visintin's Theorem \cite[Theorem~3(i)]{Vin} on norm convergence of sequences converging weakly to an extreme point,
allows for a very efficient proof of the Kadec property for integral functionals.
Indeed, using Fact~\ref{ExVis:1}, we arrive at the following.
\begin{fact}[Visintin]\label{Visin:1}
\emph{(See 
\cite[Theorem~3(i)]{Vin}.)}
Let $\phi:E\rightarrow\RX$ be proper, lower semicontinuous and strictly convex.
Then
$I_{\phi}$ has the Kadec property.
\end{fact}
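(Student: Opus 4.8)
The plan is to deduce Fact~\ref{Visin:1} as a direct specialization of Visintin's general theorem \cite[Theorem~3(i)]{Vin}: that theorem turns weak $L^1$-convergence whose limit takes \emph{extreme-point} values into strong convergence, and the only thing needed to apply it is the extreme-point hypothesis, which Fact~\ref{ExVis:1} supplies for free from the strict convexity of $\phi$.

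First I would unwind the Kadec property. Assume $x_n\weakly x$ in $L^1_E(S,\mu)$ with $x\in\dom I_\phi$ and $I_\phi(x_n)\longrightarrow I_\phi(x)<+\infty$; the target is $\|x_n-x\|_1\longrightarrow0$. Lifting each function to the graph of $\phi$, I would set
\[
\widehat{x}_n(s):=\big(x_n(s),\phi(x_n(s))\big),\qquad
\widehat{x}(s):=\big(x(s),\phi(x(s))\big),
\]
which take values in $\gra\phi\subseteq\epi\phi\subseteq E\times\RR$; here $I_\phi(x_n)<+\infty$ (resp. $x\in\dom I_\phi$) guarantees $x_n(s),x(s)\in\dom\phi$ almost everywhere, so these lifts are well defined and all of their values lie in the single closed convex set $\epi\phi$. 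Since $\phi$ is strictly convex, Fact~\ref{ExVis:1} says that $\widehat{x}(s)$ is an extreme point of $\epi\phi$ for almost every $s$; this is precisely the geometric hypothesis that Visintin's theorem converts into strong convergence.

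The substantive step is to check that $\widehat{x}_n\weakly\widehat{x}$ in $L^1_{E\times\RR}(S,\mu)$. The first coordinate is the given convergence $x_n\weakly x$. For the second I would show $\phi\circ x_n\weakly\phi\circ x$: applying the weak lower semicontinuity of the integral functional over an arbitrary measurable $T\subseteq S$ (Remark~\ref{reInteRoc} and Fact~\ref{legenFa:3}) gives $\liminf_n\int_T\phi(x_n)\,{\rm d}\mu\geq\int_T\phi(x)\,{\rm d}\mu$ for every such $T$, and splitting $S=T\cup T^c$ and combining these inequalities with the global convergence $\int_S\phi(x_n)\,{\rm d}\mu\longrightarrow\int_S\phi(x)\,{\rm d}\mu$ forces $\int_T\phi(x_n)\,{\rm d}\mu\longrightarrow\int_T\phi(x)\,{\rm d}\mu$ for \emph{every} measurable $T$. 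Together with uniform integrability of $(\phi\circ x_n)$, this upgrades to weak convergence of the second coordinate, whence $\widehat{x}_n\weakly\widehat{x}$.

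With $\widehat{x}_n\weakly\widehat{x}$ and the limit taking almost-everywhere extreme-point values in the fixed closed convex set $\epi\phi$, I would invoke \cite[Theorem~3(i)]{Vin} to conclude $\widehat{x}_n\longrightarrow\widehat{x}$ strongly in $L^1$; projecting onto the first coordinate yields $\|x_n-x\|_1\longrightarrow0$, which is the Kadec property. The main obstacle I expect is the second-coordinate convergence: securing the uniform integrability of $(\phi\circ x_n)$ that lets one pass from convergence of all the set integrals to genuine weak convergence, and matching this to the precise form of Visintin's hypotheses. By contrast, the first-coordinate convergence and the extreme-point verification via Fact~\ref{ExVis:1} are immediate from the results already established.
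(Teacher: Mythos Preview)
Your proposal is correct and matches the paper's treatment: the paper gives no proof beyond citing \cite[Theorem~3(i)]{Vin} and noting (in the sentence preceding the Fact) that Fact~\ref{ExVis:1} supplies the extreme-point hypothesis, and your epigraph lift is exactly how that citation is unpacked. The uniform-integrability obstacle you flag for the second coordinate follows from the Vitali--Hahn--Saks theorem once you have the setwise convergence $\int_T\phi(x_n)\,{\rm d}\mu\to\int_T\phi(x)\,{\rm d}\mu$ for every measurable $T$, which you already derived via Fact~\ref{legenFa:3}.
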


\begin{remark}
In the proofs of
Theorem~\ref{BLal:5} and Theorem~\ref{Tsge2:1}, we can also apply Visintin Theorem (see Fact~\ref{Visin:1}) to show that $I_\phi$ has the Kadec property.
\end{remark}
\begin{example}
Let $\phi$ be defined as in Example~\ref{lorm:1}. Then $I_{\phi}$ has the Kadec property.
Indeed, since $\phi$ is proper, lower semicontinuous and strictly convex,
it follows from Fact~\ref{Visin:1}  that $I_\phi$ has the Kadec property. \qede
\end{example}

\begin{theorem}[Strong rotundity]\label{Vistprp:2}
Let $\phi:E\rightarrow\RX$ be proper, lower semicontinuous and convex.
Suppose that $\phi$ is strictly convex on its domain and $\phi^*$ is differentiable on $E$. Then $I_{\phi}$ is strongly rotund on $L^1_E(S,\mu)$.
\end{theorem}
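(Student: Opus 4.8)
The plan is to verify the three conditions that constitute strong rotundity---strict convexity on the domain, weakly compact lower level sets, and the Kadec property---each of which can be read off from a result already recorded in the paper. Since Visintin's Theorem is now in hand, the proof should be short and should avoid the laborious separable analysis carried out in the proof of Theorem~\ref{Tsge2:1}.

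First I would note that $\phi$ is proper, lower semicontinuous and convex, hence so is $I_\phi$, and I would dispatch strict convexity immediately: because $\phi$ is strictly convex on its domain by hypothesis, Fact~\ref{legenFa:2} gives that $I_\phi$ is strictly convex on its domain. Next, for the lower level sets, I would observe that differentiability of $\phi^*$ on all of $E$ forces $\dom\phi^* = E$, since finiteness is built into differentiability; then \cite[Corollary~2B]{Rock71PJM} yields that $I_\phi$ has weakly compact lower level sets, exactly as in the proofs of Theorems~\ref{BLal:5} and~\ref{Tsge2:1}. Finally, for the Kadec property I would simply invoke Visintin's Theorem: since $\phi$ is proper, lower semicontinuous and strictly convex on its domain, Fact~\ref{Visin:1} gives directly that $I_\phi$ has the Kadec property. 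Combining the three properties shows that $I_\phi$ is strongly rotund.

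There is essentially no hard step here; the content of the theorem lies in packaging the three cited facts and, crucially, in dropping the separability assumption $\phi = \sum_i \phi_i$ required by Theorem~\ref{Tsge2:1}. The one point worth a sentence is the identification $\dom\phi^* = E$: this is precisely the hypothesis that fails for the Burg entropy of Example~\ref{lorm:1} and is exactly why that example is not strongly rotund, so here the full-domain property of $\phi^*$ is the ingredient that secures weak compactness of the level sets.
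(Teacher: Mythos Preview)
Your proposal is correct and follows essentially the same approach as the paper: both arguments verify strict convexity via Fact~\ref{legenFa:2}, weak compactness of the level sets via $\dom\phi^*=E$ and \cite[Corollary~2B]{Rock71PJM}, and the Kadec property via Visintin's Theorem (Fact~\ref{Visin:1}). Your additional sentence explaining why differentiability forces $\dom\phi^*=E$ and the remark connecting this to the Burg entropy are helpful glosses but do not alter the argument.
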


\begin{proof}
By Fact~\ref{legenFa:2}, $I_{\phi}$ is strictly convex on its domain.
Since $\dom \phi^*=E$, by \cite[Corollary~2B]{Rock71PJM}, $I_{\phi}$ has weakly compact lower level sets.
Visintin Theorem (see Fact~\ref{Visin:1}) implies that $I_\phi$ has the Kadec property.
Hence $I_\phi$ is strongly rotund.
\end{proof}

\begin{remark}
We cannot remove the assumption of  strict convexity of  $\phi$ in Theorem~\ref{Vistprp:2}.
For example, let $\phi:\RR^2\rightarrow\RX$ be defined by
\begin{align*}
(x,y)\mapsto\begin{cases}
-(xy)^{\tfrac{1}{4}},
\,&\text{if}\,\,
0\leq x\leq1,\,0\leq y\leq1;\\
+\infty,&\text{otherwise}
\end{cases}.
\end{align*} Then $\phi$ is proper lower semicontinuous and convex.
By \cite[Exercise~5.3.10, page~249]{BorVan}, $\phi$ is not strictly convex on its domain although $\phi^*$ is differentiable everywhere on $\RR^2$. Hence $I_{\phi}$ is not strongly rotund.
\end{remark}

\begin{remark}Let $\phi:E\rightarrow\RX$ be proper, lower semicontinuous and convex.
Suppose that  $\phi^*$ is differentiable on $E$.
Assume that $E$ is one-dimensional or $\dom\phi=\dom \partial \phi$ (for example,
 $\dom\phi$ is open), by Fact~\ref{phiEmth} and Fact~\ref{legenFa:1}\ref{legenFa:1E3},
  the differentiability of $\phi^*$ implies that the strictly convexity of $\phi$. Thus we can remove the assumption of the strictly convexity of  $\phi$ in Theorem~~\ref{Vistprp:2} under this constraint.
\end{remark}

\begin{example}Let $F$
be the Euclidean space that consists of  all symmetric $d\times d$ matrices with the inner product $\langle M, N\rangle=\tr(MN)$ (for every $M, N\in F$),
where $\tr(M)$ is the trace of the matrix $M$. Let $F_{++}$ be the set of symmetric $d\times d$ positive definite  matrices.
We define $\phi$ on $F$ by $M\mapsto\phi(M):=\begin{cases}
-\log \det(M),
\,&\text{if}\,
M\in F_{++};\\
+\infty,&\text{otherwise}
\end{cases}$, where $\det(M)$ is the determinant of the matrix $M$.
Then $I_{\phi}$ has the Kadec property in $L^1_{F}(S,\mu)$.
\end{example}

\begin{proof}
By \cite[Proposition~3.2.3, page~100]{BorVan},
$\phi$ is proper lower semicontinuous and strictly convex.  Then by Fact~\ref{Visin:1},
$I_{\phi}$ has the Kadec property in $L^1_{F}(S,\mu)$.
\end{proof}

\section{Convergence in measure}\label{sec:meas}

Recall that $S$ is an arbitrary non-trivial set and that $(S,\mu)$ is a complete finite measure space (with
nonzero measure $\mu$).
Let $(x_n)_{n\in\NN}$ and $x$ be in $L^1_E(S,\mu)$. We say $(x_n)_{n\in\NN}$ \emph{converges to $x$ in measure} if for every $\eta>0$,
$\lim\mu\big\{s\in S\mid \|x_n(s)-x(s)\|\geq\eta\big\}=0$.
We say $(x_n)_{n\in\NN}$ \emph{converges to $x$ $\mu$-- uniformly} if for every $\varepsilon>0$, there exists a measurable
subset $T$ of $S$ such that $\mu(T)<\varepsilon$ and $(x_n)_{n\in\NN}$ converges uniformly to $x$ on $T^c$.

Let $(x_n)_{n\in\NN}$ and $x$ be in $L^1_E(S,\mu)$. Then
$(x_n)_{n\in\NN}$ strongly converges to $x$ if and only if
$(x_n)_{n\in\NN}$ converges to $x$ in measure and $(x_n)_{n\in\NN}$
also weakly converges to $x$ (see \cite[Lemma~1 and Lemma~2]{Vin}).
Thus, for aa strictly convex integrand, Theorem \ref{Vistprp:2}
shows that weak convergence must fail whenever measure convergence
holds and strong convergence does not follow.

The following is another sufficient condition  for a  sequence
convergent in measure to be strongly convergent.
\begin{fact}\emph{See (\cite[Theorem~3.6, page~122]{DunSch})}\label{vicon:1}~ Let $(x_n)_{n\in\NN}$ be in $L^1_E(S,\mu)$ and $x:S\rightarrow E$.
Then $x\in L^1_E(S,\mu)$ and $x_n\longrightarrow x$ if and only if the following conditions hold:
\begin{enumerate}
 \item $(x_n)_{n\in\NN}$ converges to $x$ in measure.
\item $\lim_{\mu(E)\rightarrow0}\int_E\|x_n(s)\|{\rm d}\mu(s)=0$\quad\text{uniformly in $n$}.
\end{enumerate}
\end{fact}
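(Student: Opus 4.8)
The plan is to prove this Vitali-type convergence criterion by establishing the two implications separately, with the reverse direction carrying the real work. Throughout, for a measurable subset $A$ of $S$ I write $\int_A\|y\|$ for $\int_A\|y(s)\|\,{\rm d}\mu(s)$, and I read condition (ii) as a \emph{uniform absolute continuity} (uniform integrability) statement over such sets $A$; this is the natural hypothesis precisely because $(S,\mu)$ is finite.

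For the forward direction, suppose $x\in L^1_E(S,\mu)$ and $\|x_n-x\|_1\longrightarrow 0$. Condition (i) is immediate from Markov's inequality, since for every $\eta>0$ we have $\mu\big\{s\in S\mid \|x_n(s)-x(s)\|\geq\eta\big\}\leq \tfrac{1}{\eta}\|x_n-x\|_1\longrightarrow 0$. For condition (ii), fix $\varepsilon>0$ and choose $N$ with $\|x_n-x\|_1<\varepsilon$ for $n\geq N$; then $\int_A\|x_n\|\leq \int_A\|x\|+\varepsilon$ for all $n\geq N$. Since each of the finitely many functions $x,x_1,\dots,x_{N-1}$ lies in $L^1_E(S,\mu)$, their integrals are absolutely continuous, so a single $\delta>0$ makes $\int_A\|x\|$ and each $\int_A\|x_i\|$ small whenever $\mu(A)<\delta$; this yields (ii).

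For the reverse direction, assume (i) and (ii), and first show $x\in L^1_E(S,\mu)$. Convergence in measure yields a subsequence with $x_{n_k}(s)\longrightarrow x(s)$ almost everywhere, so by Fatou's lemma $\int_S\|x\|\leq\liminf_k\int_S\|x_{n_k}\|$, and it suffices to bound $\sup_n\|x_n\|_1$. Here both hypotheses are genuinely needed: on the non-atomic part of $S$ one partitions into finitely many sets of measure $<\delta$ and adds the uniform bounds supplied by (ii), while on any atom of measure $\geq\delta$ condition (i) forces the (almost everywhere constant) values of $x_n$ to converge, hence to stay bounded. With $M:=\sup_n\|x_n\|_1<\infty$, Fatou gives $x\in L^1_E(S,\mu)$.

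Finally, to obtain $\|x_n-x\|_1\longrightarrow 0$, fix $\varepsilon>0$ and use (ii) together with the absolute continuity of $A\mapsto\int_A\|x\|$ to choose $\delta>0$ so that $\mu(A)<\delta$ forces both $\int_A\|x_n\|<\varepsilon$ (for all $n$) and $\int_A\|x\|<\varepsilon$. Pick $\eta>0$ with $\eta\,\mu(S)<\varepsilon$, and set $A_n:=\big\{s\in S\mid \|x_n(s)-x(s)\|\geq\eta\big\}$, so that $\mu(A_n)<\delta$ for $n$ large by (i). Splitting
$$\|x_n-x\|_1=\int_{A_n}\|x_n-x\|\,{\rm d}\mu+\int_{A_n^c}\|x_n-x\|\,{\rm d}\mu,$$
the first term is at most $\int_{A_n}\|x_n\|+\int_{A_n}\|x\|<2\varepsilon$ and the second is at most $\eta\,\mu(S)<\varepsilon$, whence $\limsup_n\|x_n-x\|_1\leq 3\varepsilon$; letting $\varepsilon\downarrow 0$ closes the argument. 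The main obstacle is the integrability step of the reverse direction: convergence in measure by itself does not bound $\|x_n\|_1$, and (ii) alone fails on a fixed atom, so the uniform bound $M$ must be assembled from both hypotheses while carefully separating the non-atomic part from the atoms.
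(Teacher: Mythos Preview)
The paper does not supply its own proof of this Fact; it is simply quoted from Dunford--Schwartz, so there is nothing to compare your argument against. Your proof is the standard Vitali--convergence argument and is correct in all essential respects.

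One small incompleteness in the reverse direction: when you establish $M:=\sup_n\|x_n\|_1<\infty$, your case split covers the non-atomic part of $S$ (partitioned into finitely many pieces of measure $<\delta$) and the finitely many atoms of measure $\ge\delta$, but says nothing about atoms of measure $<\delta$, of which there may be infinitely many. These can of course be grouped into finitely many disjoint unions each of total measure $<\delta$ and then treated via (ii), but a cleaner route avoids the atomic decomposition entirely: since $x_n\to x$ in measure, the sequence is Cauchy in measure, so one may fix $n_0$ with $\mu(A_n)<\delta$ for all $n\ge n_0$, where $A_n:=\{s:\|x_n(s)-x_{n_0}(s)\|\ge 1\}$; then
\[
\|x_n\|_1\;\le\;\int_{A_n}\|x_n\|\;+\;\int_{A_n^c}\big(\|x_{n_0}\|+1\big)\;\le\;1+\|x_{n_0}\|_1+\mu(S),
\]
which gives the uniform bound directly and uses both hypotheses just as you intended.
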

See \cite{Bal} for more information on the relationships between weak, measure and strong convergence.

\begin{fact}\emph{(See \cite[Corollary~3.3, page~145]{DunSch}.)}\label{vicon:2} Let $(x_n)_{n\in\NN}$ and $x$ be in $L^1_E(S,\mu)$.
Assume that $(x_n)_{n\in\NN}$ converges to $x$ in measure. Then there exists a subsequence of $(x_n)_{n\in\NN}$ that  converges to $x$ $\mu$--uniformly.
\end{fact}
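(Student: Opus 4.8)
The plan is to extract from $(x_n)_{n\in\NN}$ a subsequence along which convergence in measure happens so rapidly that the measures of the exceptional sets are summable; a Borel--Cantelli-type argument then upgrades convergence in measure to the $\mu$--uniform (almost uniform) mode. This is the standard route to Egorov's theorem phrased for fast convergence in measure.

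First I would exploit the hypothesis one threshold at a time. For each fixed $k\in\NN$, convergence in measure applied with $\eta=1/k$ gives $\mu\big\{s\in S\mid \|x_n(s)-x(s)\|\geq 1/k\big\}\longrightarrow 0$ as $n\longrightarrow\infty$. Hence I can choose an increasing sequence of indices $n_1<n_2<\cdots$ recursively so that, writing $A_k:=\big\{s\in S\mid \|x_{n_k}(s)-x(s)\|\geq 1/k\big\}$, one has $\mu(A_k)\leq 2^{-k}$ for every $k\in\NN$. In particular $\sum_{k\in\NN}\mu(A_k)<+\infty$. Taking $n_k$ strictly larger than $n_{k-1}$ at each step guarantees a genuine subsequence, and the geometric schedule $2^{-k}$ is precisely what makes the total mass of the bad sets finite.

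Next I would build the exceptional sets as tails of $\bigcup_k A_k$. Fix $\varepsilon>0$ and pick $m\in\NN$ with $2^{-(m-1)}<\varepsilon$; set $T:=\bigcup_{k\geq m}A_k$, so that $\mu(T)\leq\sum_{k\geq m}\mu(A_k)\leq 2^{-(m-1)}<\varepsilon$. On the complement $T^c=\bigcap_{k\geq m}A_k^c$ every $s$ satisfies $\|x_{n_k}(s)-x(s)\|<1/k$ for all $k\geq m$. This is exactly uniform convergence on $T^c$: given $\eta>0$, choosing $K\geq m$ with $1/K<\eta$ yields $\sup_{s\in T^c}\|x_{n_k}(s)-x(s)\|\leq 1/k<\eta$ for all $k\geq K$. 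Since $\varepsilon>0$ was arbitrary, the subsequence $(x_{n_k})_{k\in\NN}$ converges to $x$ $\mu$--uniformly in the sense defined in Section~\ref{sec:meas}.

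There is no serious analytic obstacle here; the content is combinatorial bookkeeping, and the one idea that does the work is the geometric selection forcing $\sum_k\mu(A_k)<+\infty$. This is what converts the ``for each threshold separately'' form of measure convergence into the ``one exceptional set simultaneously controls all thresholds'' form of $\mu$--uniform convergence. The finiteness of $\mu$ plays no essential role in the argument beyond guaranteeing the functions lie in $L^1_E(S,\mu)$; the only point requiring care is the recursive choice of the $n_k$, where one must invoke the measure-convergence limit \emph{before} fixing $k$ so that the thresholds $1/k$ and the tolerances $2^{-k}$ can be matched up along a single increasing index sequence.
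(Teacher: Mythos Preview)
Your argument is correct and is the standard Borel--Cantelli extraction that underlies this classical result. Note, however, that the paper does not supply its own proof of this statement: it is recorded as a Fact with a citation to \cite[Corollary~3.3, page~145]{DunSch}, so there is no in-paper proof to compare against; your write-up simply fills in what the reference would contain.
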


Let $f:X\rightarrow\RX$ be lower semicontinuous at  $x_0\in \dom f$. Then the \emph{Clarke-Rockafellar directional derivative}
of $f$ at $x_0$ is defined
\begin{align*}
f^{\uparrow}(x_0;v):=\sup_{\varepsilon>0}\limsup_{t\downarrow 0,\,
x\rightarrow_{f} x_0}\inf_{\|u-v\|\leq\varepsilon}\tfrac{f(x+tu)-f(x)}{t},\quad
\forall v\in X,
\end{align*} where $x\rightarrow_{f} x_0$ means that $x\longrightarrow x_0$ and
$f(x)\longrightarrow f(x_0)$.
Then the \emph{Clarke subdifferential} of $f$ at $x_0$ is defined by
\begin{align*}
\partial_C f(x_0):=\big\{x^*\in X^*\mid \langle x^*, v\rangle\leq f^{\uparrow}(x_0;v),\, \forall v\in X\big\}.
\end{align*}
If $f$ is also convex, then $\partial f=\partial_C f$ (see \cite[Theorem~3.2.4(ii)]{Zalinescu}).

We shall need the following mean value theorem:

\begin{fact}[Zagrodny]\label{mvat:1}\emph{(See \cite[Theorem~3.2.5]{Zalinescu}.)}
Let $f:X\rightarrow\RX$ be  proper  lower semicontinuous.
Let $x,y\in\dom f$.  Then there exist a sequence $(x_n)_{n\in\NN}$ and $z\in\left[x,y\right]$ and $x^*_n \in\partial_C f(x_n)$
such that $x_n\longrightarrow z$, $f(x_n)\longrightarrow f(x)$ and
\begin{align*}
f(y)-f(x)\leq\liminf\langle y-x, x^*_n\rangle.
\end{align*}
\end{fact}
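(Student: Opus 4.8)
The plan is to deduce this from Ekeland's variational principle, once the ``slope along $y-x$'' has been turned into a genuine continuous linear functional. By the Hahn--Banach theorem choose $\ell^*\in X^*$ with $\|\ell^*\|=1$ and $\scal{\ell^*}{y-x}=\|y-x\|=:L$, and put $m:=\big(f(y)-f(x)\big)/L$. Define the tilted function $g\colon X\to\RX$ by $g(u):=f(u)-m\scal{\ell^*}{u}$. Then $g$ is again proper and lower semicontinuous, and the tilt is calibrated exactly so that
\begin{align*}
g(y)-g(x)=\big(f(y)-f(x)\big)-m\scal{\ell^*}{y-x}=\big(f(y)-f(x)\big)-mL=0,
\end{align*}
i.e.\ $g(x)=g(y)$. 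Since the segment $[x,y]$ is a continuous image of $[0,1]$ it is compact, so the lower semicontinuous function $g$ attains its minimum over $[x,y]$ at some $z\in[x,y]$, with $g(z)\le g(x)=g(y)$. In particular $g(z)<+\infty$, whence $z\in\dom f$ and $f(z)\in\RR$; by lower semicontinuity of $f$ at $z$ there is then $\rho>0$ such that $g$ is bounded below on the closed ball $B(z,\rho)$.

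Second, I would apply Ekeland's variational principle to $g$ localized around $z$. Running Ekeland on the complete metric space $B(z,\rho)$ with vanishing tolerances $\varepsilon_n\downarrow0$ produces points $x_n$, each minimizing the perturbed functional $g(\cdot)+\varepsilon_n\|\cdot-x_n\|$ over the ball; the exact Clarke sum rule, together with the fact that the Clarke subdifferential of a norm lies in the dual unit ball, then yields $\xi_n\in\partial_C g(x_n)$ with $\|\xi_n\|\le\varepsilon_n\to0$. The delicate point is to arrange that these approximate critical points localize at $z$, that is $x_n\to z$ with $g(x_n)\to g(z)$ (equivalently $f(x_n)\to f(z)$, using $x_n\to z$ and continuity of $\scal{\ell^*}{\cdot}$), and that each $x_n$ lies interior to $B(z,\rho)$ so that no normal-cone contribution from the ball intrudes. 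Guaranteeing this requires exploiting that $z$ minimizes $g$ \emph{along the one-dimensional segment} $[x,y]$ --- and not merely over the ball, where $g$ could dip lower off the segment --- e.g.\ by a penalization or thin-tube argument forcing the Ekeland points toward the segment and then toward $z$, and by treating the endpoint case $z\in\{x,y\}$ separately. This localization is the main obstacle.

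Finally, granting such a sequence, the conclusion is immediate. Subtracting the continuous linear functional $m\ell^*$ is exact for the Clarke subdifferential, so $\partial_C g(x_n)=\partial_C f(x_n)-m\ell^*$ and hence $x_n^*:=\xi_n+m\ell^*\in\partial_C f(x_n)$. Then
\begin{align*}
\scal{y-x}{x_n^*}=\scal{y-x}{\xi_n}+m\scal{\ell^*}{y-x}\ge-\|\xi_n\|\,L+mL=\big(f(y)-f(x)\big)-\|\xi_n\|\,L,
\end{align*}
and since $\|\xi_n\|\to0$ we obtain $\liminf\scal{y-x}{x_n^*}\ge f(y)-f(x)$, together with $x_n\to z\in[x,y]$ and $f(x_n)\to f(z)$ (the value of $f$ at the intermediate point $z$), as required.
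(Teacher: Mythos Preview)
The paper does not prove this fact; it is quoted from Z\u{a}linescu's book and used as a black box. So there is no ``paper's proof'' to compare against, and I can only assess your argument on its own terms.

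Your overall strategy --- tilt $f$ by a linear functional so the endpoints have equal value, minimize along the segment, then use Ekeland to produce nearby approximate critical points --- is indeed the skeleton of Zagrodny's proof. But you have explicitly left the heart of the argument unfinished: as you yourself write, ``this localization is the main obstacle,'' and you then gesture at ``a penalization or thin-tube argument'' without supplying one. This is exactly where the work lies. The point $z$ minimizes $g$ only on the one-dimensional segment $[x,y]$; off the segment $g$ may drop arbitrarily far below $g(z)$, so running Ekeland on $B(z,\rho)$ with $z$ as the approximate minimizer is not justified, and there is no mechanism forcing the Ekeland points $x_n$ to approach $z$ (or even to stay interior to the ball, which you need to avoid a normal-cone term). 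Zagrodny's actual proof handles this by a carefully designed auxiliary function that penalizes distance from the segment direction while preserving the subdifferential structure; without that construction, your sketch does not close.

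A second issue: the statement (as recorded here) asks for $f(x_n)\to f(x)$, whereas your argument delivers $f(x_n)\to f(z)$. The version in Z\u{a}linescu indeed gives $f(x_n)\to f(z)$, so this is likely a typo in the paper's transcription; but you should flag the discrepancy rather than silently conclude something different from what was asked.
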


We are now ready for two results showing  when convergence in measure of a sequence $(x_n)_{n\in\NN}$ allows us to deduce convergence of $\left(I_\phi(x_n)\right)_{n\in\NN}.$ This is useful if one thinks of $I_\phi$ as a measurement of a reconstruction $x_n$  for a member of a sequence which may not be norm convergent to the underlying signal $x$.

\begin{theorem}[\textbf{Preservation of convergence in measure, I}]\label{Prpvitc:1} Let $\phi:E\rightarrow\RX$ be continuous. Let $(x_n)_{n\in\NN}$ and $x$ be in $L^1_E(S,\mu)$ such that $(x_n)_{n\in\NN}$ converges to $x$ in measure. Assume that there exists $M>0$ such that $|\phi(v)|\leq M$ for all $v\in E$.
Suppose that one of the following conditions holds.\begin{enumerate}
\item $x\in L^{\infty}_E(S,\mu)$; or if
\item $\phi$ is uniformly continuous, in particular, when $\phi$ is globally Lipschitz.
\end{enumerate}
Then $\int_S\big|\phi\big(x_n(s)\big)-\phi\big(x(s)\big)\big|{\rm
d}\mu(s)\longrightarrow 0$.
 Consequently, $I_\phi (x_n)\longrightarrow I_\phi (x)$.

\end{theorem}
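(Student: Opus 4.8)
The plan is to reduce the whole statement to showing that the scalar sequence $a_n := \int_S\big|\phi(x_n(s))-\phi(x(s))\big|{\rm d}\mu(s)$ tends to $0$, and to obtain this by a subsequence-of-a-subsequence argument fed by Fact~\ref{vicon:2}. First I would record the consequences of the hypothesis $|\phi(v)|\leq M$: it forces $\phi$ to be finite-valued, gives the pointwise bound $|\phi(x_n(s))-\phi(x(s))|\leq 2M$, and (since $\mu$ is finite) yields the constant integrable majorant $2M$. It also shows $I_\phi(x_n),I_\phi(x)\in[-M\mu(S),M\mu(S)]$, so the final assertion will follow immediately from $|I_\phi(x_n)-I_\phi(x)|\leq a_n$ once $a_n\to 0$ is established.

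To prove $a_n\to 0$ it suffices to show that every subsequence of $(a_n)$ has a further subsequence tending to $0$. So I would fix a subsequence $(x_{n_k})_{k\in\NN}$, note that it still converges to $x$ in measure, and invoke Fact~\ref{vicon:2} to extract a further subsequence $(x_{n_{k_j}})_{j\in\NN}$ converging to $x$ $\mu$--uniformly. The core of the proof is then to verify, under either hypothesis (i) or (ii), that $a_{n_{k_j}}\longrightarrow 0$ along any such $\mu$--uniformly convergent subsequence.

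For this core step I would fix $\varepsilon>0$ and use $\mu$--uniform convergence to select a measurable $T$ with $\mu(T)<\varepsilon/(4M)$ on whose complement $x_{n_{k_j}}\to x$ uniformly. On $T$ the integrand is controlled crudely by $2M$, contributing at most $2M\mu(T)<\varepsilon/2$. On $T^c$ the point is to make $|\phi(x_{n_{k_j}}(s))-\phi(x(s))|$ uniformly small, and this is precisely where the two cases separate. In case (ii), $\phi$ is uniformly continuous on all of $E$, so the modulus of continuity converts the uniform smallness of $\|x_{n_{k_j}}-x\|$ on $T^c$ directly into uniform smallness of the $\phi$--differences. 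In case (i), with $R:=\|x\|_\infty<+\infty$, the uniform convergence on $T^c$ confines $x_{n_{k_j}}(s)$ to the compact ball $\overline{B(0,R+1)}$ for all large $j$, on which the continuous $\phi$ is automatically uniformly continuous, and the same estimate applies. Either way I can arrange $|\phi(x_{n_{k_j}}(s))-\phi(x(s))|\leq \varepsilon/(2\mu(S))$ uniformly on $T^c$ for large $j$, whence $\int_{T^c}<\varepsilon/2$ and therefore $a_{n_{k_j}}<\varepsilon$.

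Combining the two pieces gives $a_{n_{k_j}}\to 0$, and the subsequence principle then forces $a_n\to 0$ and hence $I_\phi(x_n)\longrightarrow I_\phi(x)$. I expect the only genuine obstacle to be careful bookkeeping: making sure each hypothesis is used exactly to secure uniform continuity on the relevant range of values---globally in case (ii), on a fixed compact ball in case (i)---while the boundedness of $\phi$ absorbs the small-measure exceptional set $T$ supplied by $\mu$--uniform convergence, and while one keeps the roles of $\varepsilon$, $\mu(T)$, and the modulus of continuity cleanly separated.
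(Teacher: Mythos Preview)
Your proposal is correct and follows essentially the same approach as the paper: both arguments invoke Fact~\ref{vicon:2} to pass to a $\mu$--uniformly convergent subsequence, split $S$ into a small exceptional set $T$ (where the crude bound $2M$ absorbs the integral) and its complement (where uniform continuity---global in case~(ii), on the compact ball $(\|x\|_\infty+1)B_E$ in case~(i)---makes the integrand uniformly small). The only cosmetic difference is that the paper casts the argument as a proof by contradiction (extract a bad subsequence, then derive a contradiction via Fact~\ref{vicon:2}), whereas you phrase it directly via the subsequence-of-a-subsequence principle; these are logically equivalent formulations of the same idea.
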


\begin{proof} We first  assume that $x\in L^{\infty}_E(S,\mu)$.
Suppose to the contrary that
$\int_S\big|\phi\big(x_n(s)\big)-\phi\big(x(s)\big)\big|{\rm
d}\mu(s)\nrightarrow 0$. Then there exist $\varepsilon_0>0$ and a
subsequence of $(x_n)_{n\in\NN}$, for convenience, still denoted by
$(x_n)_{n\in\NN}$, such that
\begin{align}
\int_S\big|\phi\big(x_n(s)\big)-\phi\big(x(s)\big)\big|{\rm
d}\mu(s)\geq\varepsilon_0,\quad \forall n\in\NN.\label{valic:6}
\end{align}
Since $x\in L^{\infty}_E(S,\mu)$, there exists $L>0$ such that $\|x(s)\|\leq L$ for almost all $s\in S$. We can and do suppose that \begin{align}
\|x(s)\|\leq L,\quad \forall s\in S.\label{valic:5}
\end{align}
Let $\varepsilon>0$ .
Since $\phi$ is  continuous, then $\phi$ is uniformly continuous on $(L+1)B_E$. Then  there exists $\delta>0$ such that
\begin{align}
|\phi(u)-\phi(v)|\leq\varepsilon,\quad\forall \|u-v\|\leq\delta, \forall u,v\in (L+1)B_E.\label{valic:7}
\end{align}
By Fact~\ref{vicon:2}, there exists  a subsequence $(x_{n_k})_{k\in\NN}$
of $(x_n)_{n\in\NN}$ such that $(x_{n_k})_{k\in\NN}$ converges to $x$ $\mu$--uniformly.
Then there exist $ N_1\in\NN$ and  a measurable subset $T$ of $S$ such that
$\mu(T)<\varepsilon$ and
\begin{align}
\|x_{n_k}(s)-x(s)\|\leq \min\{\delta,1\},\quad\forall k\geq N_1,\forall s\in T^c.\label{valic:8}
\end{align}
Then by \eqref{valic:5},
\begin{align}
x_{n_k}(s)\in( L+1)B_E,\quad\forall k\geq N_1,\forall s\in T^c.\label{valic:9}
\end{align}
Then by assumption, we have
\begin{align*}
&\int_S\big|\phi\big(x_{n_k}(s)\big)-\phi\big(x(s)\big)\big|{\rm d}\mu(s)\\
&=
\int_{T^c}\big|\phi\big(x_{n_k}(s)\big)-\phi\big(x(s)\big)\big|{\rm
d}\mu(s)
+\int_T\big|\phi\big(x_{n_k}(s)\big)-\phi\big(x(s)\big)\big|{\rm d}\mu(s)\\
&\leq \int_{T^c}\varepsilon {\rm
d}\mu(s)+\int_T\big|\phi\big(x_{n_k}(s)\big)-\phi\big(x(s)\big)\big|{\rm
d}\mu(s)\quad\text{(by \eqref{valic:9},
 \eqref{valic:5},\eqref{valic:7} and \eqref{valic:8})}\\
&\leq \int_{T^c}\varepsilon {\rm d}\mu(s)+\int_T 2M {\rm d}\mu(s)\quad\text{(since $|\phi(v)|\leq M$ for all $v\in E$)}\\
&\leq \varepsilon\mu(T^c)+ 2M\varepsilon,\quad \forall k\geq N_1.
\end{align*}
Then
$\int_S\big|\phi\big(x_{n_k}(s)\big)-\phi\big(x(s)\big)\big|{\rm
d}\mu(s)\longrightarrow 0$, which contradicts
 \eqref{valic:6}.  Hence
$\int_S\big|\phi\big(x_n(s)\big)-\phi\big(x(s)\big)\big|{\rm
d}\mu(s)\longrightarrow 0$.
 Consequently, $I_\phi (x_n)\longrightarrow I_\phi (x)$.

 The proof is similar when $\phi$ is assumed uniformly continuous but $x$ is allowed to  lie in $L^{1}_E(S,\mu)$.
\end{proof}

The next result replaces continuity conditions on $\phi$ by a boundedness requirement on the range of its Clarke subdifferential.

\begin{theorem}[\textbf{Preservation of convergence in measure, II}]\label{Prpvitc:3} Let $\phi:E\rightarrow\RX$ be proper lower semicontinuous. Let $(x_n)_{n\in\NN}$ and $x$ be in $\dom I_\phi$.
Assume that there exists $\delta>0$ such that
\begin{align}\sup_{(x,x^*)\in\gra\partial_C\phi}\|x^*\|\leq\delta.\label{delb}\end{align}
Suppose that $(x_n)_{n\in\NN}$ converges to $x$ in measure
and there exists $M>0$ such that $|\phi(v)|\leq M$ for all $v\in \dom \phi$.
 Then $\ I_\phi(x_n)\longrightarrow I_\phi(x)$.
\end{theorem}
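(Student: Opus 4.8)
The plan is to first upgrade the Clarke-subdifferential bound \eqref{delb} into a global Lipschitz estimate for $\phi$ on $\dom\phi$, and then to run a direct truncation argument driven by convergence in measure together with the uniform bound $|\phi|\le M$.

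First I would establish that
\begin{align*}
|\phi(u)-\phi(v)|\le\delta\|u-v\|,\qquad\forall\,u,v\in\dom\phi.
\end{align*}
To prove this, fix $u,v\in\dom\phi$ and apply Zagrodny's mean value theorem (Fact~\ref{mvat:1}) to the proper lower semicontinuous function $\phi$: it produces a sequence $(w_k)_{k\in\NN}$ and points $w^*_k\in\partial_C\phi(w_k)$ with $\phi(v)-\phi(u)\le\liminf_k\langle v-u,w^*_k\rangle$. Each $w^*_k$ lies in $\gra\partial_C\phi$, so \eqref{delb} gives $\|w^*_k\|\le\delta$, whence $\langle v-u,w^*_k\rangle\le\delta\|v-u\|$ by Cauchy--Schwarz; therefore $\phi(v)-\phi(u)\le\delta\|v-u\|$. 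Interchanging $u$ and $v$ yields the two-sided estimate.

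Next I would note that since $x,x_n\in\dom I_\phi$ and $\phi$ is finite precisely on $\dom\phi$, we have $x(s)\in\dom\phi$ and $x_n(s)\in\dom\phi$ for almost every $s$ and every $n$; after discarding one null set the Lipschitz estimate applies pointwise to the pair $\big(x_n(s),x(s)\big)$. Then, fixing $\eta>0$ and setting $A_n^\eta:=\{s\in S\mid\|x_n(s)-x(s)\|\ge\eta\}$, on $S\setminus A_n^\eta$ the Lipschitz bound gives $|\phi(x_n(s))-\phi(x(s))|\le\delta\eta$, while on $A_n^\eta$ the bound $|\phi|\le M$ gives $|\phi(x_n(s))-\phi(x(s))|\le 2M$. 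Integrating,
\begin{align*}
\int_S\big|\phi(x_n(s))-\phi(x(s))\big|{\rm d}\mu(s)\le\delta\eta\,\mu(S)+2M\,\mu(A_n^\eta).
\end{align*}
Convergence in measure forces $\mu(A_n^\eta)\to0$ as $n\to\infty$, so $\limsup_n\int_S|\phi(x_n(s))-\phi(x(s))|{\rm d}\mu(s)\le\delta\eta\,\mu(S)$, and letting $\eta\downarrow0$ drives this $\limsup$ to $0$. Consequently $I_\phi(x_n)\longrightarrow I_\phi(x)$.

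The hard part is the first step: converting \eqref{delb} into the pointwise Lipschitz inequality. Here $\phi$ is only assumed proper and lower semicontinuous---not convex or differentiable---so no elementary gradient estimate is available, and the nonsmooth mean value theorem (Fact~\ref{mvat:1}) together with lower semicontinuity is exactly what supplies the missing control. Once this uniform Lipschitz constant is in hand, the remainder is routine; in contrast to the proof of Theorem~\ref{Prpvitc:1}, the globality of the estimate means that no localization to a bounded set---and hence no passage to a $\mu$-uniformly convergent subsequence via Fact~\ref{vicon:2}---is required.
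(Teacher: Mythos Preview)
Your proof is correct and follows essentially the same approach as the paper's: both use Zagrodny's mean value theorem (Fact~\ref{mvat:1}) together with the bound \eqref{delb}, split $S$ according to whether $\|x_n(s)-x(s)\|$ exceeds a threshold, and use $|\phi|\le M$ on the bad set. Your organization---extracting the global $\delta$-Lipschitz estimate once at the outset---is cleaner than the paper's pointwise invocation inside the integral and in fact yields the slightly stronger conclusion $\int_S|\phi(x_n(s))-\phi(x(s))|\,{\rm d}\mu(s)\to 0$.
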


\begin{proof}
Since $(x_n)_{n\in\NN}$ and $x$ are in $\dom I_\phi$, we can and do assume that $x_{n}(s)\in\dom\phi$ for all $n\in\NN, s\in S$ and $x(s)\in\dom \phi$ for all $s\in S$.

Let $\varepsilon>0$. Since $(x_n)_{n\in\NN}$ converges to $x$ in measure,
there exists $N_1\in\NN$ such that
\begin{align}
\mu(T_n)<\varepsilon,\quad \forall n\geq N_1,\label{valic:10a1}
\end{align}
\allowdisplaybreaks
where $T_n:=\big\{s\in S\mid \|x_{n}(s)-x(s)\|\geq\varepsilon\big\}$.
Then for every $ n\geq N_1$, by Fact~\ref{mvat:1},
there exists $y^*_{n_s}\in\partial_C f(y_{n_s})$ for all $s\in S$ such that
\begin{align*}
&\limsup \big(I_\phi(x_n)- I_\phi(x)\big){\rm d}\mu(s)\\
&=\limsup \int_S\Big(\phi\big(x_{n}(s)\big)-\phi\big(x(s)\big)\Big){\rm d}\mu(s)\\
&\leq\limsup \int_{T_n}
\Big(\phi\big(x_{n}(s)\big)-\phi\big(x(s)\big)\Big){\rm d}\mu(s)+
\limsup \int_{(T_n)^c} \Big(\phi\big(x_{n}(s)\big)-\phi\big(x(s)\big)\Big){\rm d}\mu(s)\\
&\leq \limsup \int_{T_n} 2M {\rm d}\mu(s)+
\limsup \int_{(T_n)^c} \Big(\phi\big(x_{n}(s)\big)-\phi\big(x(s)\big)\Big){\rm d}\mu(s)\\
&\leq\limsup 2M\mu(T_n)+
\limsup \int_{(T_n)^c} \big\langle y^*_{n_s}, x_{n}(s)-x(s)\big\rangle +\tfrac{1}{n}{\rm d}\mu(s)\\
&\leq 2M\varepsilon+
\limsup \int_{(T_n)^c} \|y^*_{n_s}\|\cdot \|x_{n}(s)-x(s)\| +\tfrac{1}{n}{\rm d}\mu(s)\quad \text{(by \eqref{valic:10a1})}\\
&\leq 2M\varepsilon+
\limsup \int_{(T_n)^c}\delta \varepsilon +\tfrac{1}{n}{\rm d}\mu(s)\quad \text{(by the assumption \eqref{delb})}\\
&\leq 2M\varepsilon+
\delta \varepsilon\mu(S).
\end{align*}
Then letting $\varepsilon\longrightarrow 0$ in the above equation, we have
$\limsup I_\phi(x_n)\leq I_\phi(x)$.

Similarly, we have
\begin{align*}
I_\phi(x)-\liminf I_\phi(x_n)=\limsup\big(I_\phi(x)-I_\phi(x_n))\leq0.
\end{align*}
Combining above results, we have
\begin{align*}
I_\phi(x)\leq\liminf I_\phi(x_n)\leq\limsup I_\phi(x_n)\leq I_\phi(x).
\end{align*}
Hence $I_\phi(x_n)\longrightarrow I_\phi(x)$.
\end{proof}

While convex integrands will not satisfy \eqref{delb} there are many simple examples which do.

\begin{example}[\textbf{Nonconvex integrands}]
Let $\phi(x):=\min\{\|x\|,1\}$ for every $x\in E$.
Let $(x_n)_{n\in\NN}$ and $x$ be in $L^1_E(S,\mu)$.
Suppose that  $(x_n)_{n\in\NN}$ converges to $x$ in measure.
 Then $I_\phi (x_n)\longrightarrow I_\phi (x)$.
\end{example}
\begin{proof}
Clearly, $\phi$ is continuous  (actually Lipschitz) and $\sup_{(x,x^*)\in\gra\partial_C\phi}\|x^*\|\leq 1$. By the definition of
$\phi$, we have $(x_n)_{n\in\NN}$ and $x$ are in $\dom I_\phi$.
Then directly apply Theorem~\ref{Prpvitc:3}.\end{proof}

To use such value convergence results, it behooves us to provide an
example of integrands such that
$\int_S\big|\phi\big(x_n(s)\big)-\phi\big(x(s)\big)\big|\,{\rm
d}\mu(s)\longrightarrow 0$ implies $x_n\rightarrow x$ in measure.

\begin{example}\label{Exlbr:1}
Let $\phi(x):=\begin{cases}
-\log x,
\,&\text{if}\,
x>0;\\
+\infty,&\text{otherwise}
\end{cases}, \forall x\in\RR$. Let $S=\left[0,1\right]$  and let $\mu$ be Lebesgue measure.
 Let $(x_n)_{n\in\NN}$ and $x$ be in $\dom I_{\phi}$. Suppose that $x\in L^{\infty}_E(S,\mu)$ and $\int_S\big|\phi\big(x_n(s)\big)-\phi\big(x(s)\big)\big|{\rm d}\mu(s)\longrightarrow 0$. Then $x_n\rightarrow x$ in measure.
\end{example}
\begin{proof}
By the assumption, we can and do assume that $x_{n}(s)\in\dom\phi$ for all $n\in\NN, s\in S$ and $x(s)\in\dom \phi$ for all $s\in S$. Since $x\in L^{\infty}_E(S,\mu)$, there exists $L>0$ such that $|x(s)|\leq L$ for almost all $s\in S$. We can and do suppose that \begin{align}
|x(s)|\leq L,\quad \forall s\in S.\label{valic:15}
\end{align}

Suppose to the contrary that $(x_n)_{n\in\NN}$ does not converge to
$x$ in measure. Then there exist $\eta>0$, $\varepsilon_0>0$ and a
subsequence $(x_{n_k})_{k\in\NN}$ of $(x_n)_{n\in\NN}$ such that
\begin{align}
\mu(T_k)\geq\varepsilon_0,\quad \forall k\in\NN,\label{valic:16}
\end{align}
where $T_k:=\big\{s\in S\mid \big|x_{n_k}(s)-x(s)\big|\geq\eta\big\}$.
Then we have
\begin{align*}
&\int_S\big|\phi\big(x_{n_k}(s)\big)-\phi\big(x(s)\big)\big|{\rm d}\mu(s)\\
&=\int_{T_k}
\big|\phi\big(x_{n_k}(s)\big)-\phi\big(x(s)\big)\big|{\rm d}\mu(s)+
\int_{(T_k)^c} \big|\phi\big(x_{n_k}(s)\big)-\phi\big(x(s)\big)\big|{\rm d}\mu(s)\\
&\geq \int_{T_k} \big|\phi\big(x_{n_k}(s)\big)-\phi\big(x(s)\big)\big|{\rm d}\mu(s)\\
&=\int_{T_k} \big|\big\langle \phi'(y_{k_s}), x_{n_k}(s)-x(s)\big\rangle\big|{\rm d}\mu(s),\quad \exists y_{k_s}\in \left[ x_{n_k}(s), x(s)\right]\quad\text{(by Mean Value Theorem)}\\
&=\int_{T_k} \tfrac{1}{\big|x(s)+t_{k_s}\big(x_{n_k}(s)-x(s)\big)\big|}\cdot \big| x_{n_k}(s)-x(s)\big|{\rm d}\mu(s),\quad \exists t_{k_s}\in \left[ 0, 1\right]\\
&\geq\int_{T_k} \tfrac{1}{\big|x(s)\big|+\big|x_{n_k}(s)-x(s)\big|}\cdot \big| x_{n_k}(s)-x(s)\big|{\rm d}\mu(s)\\
&\geq \int_{T_k} \tfrac{\eta}{L+\eta}\geq\tfrac{\eta\varepsilon_0}{L+\eta}\quad\text{(by \eqref{valic:15} and \eqref{valic:16})}, \forall k\in\NN,
\end{align*}
which contradicts that
$\int_S\big|\phi\big(x_n(s)\big)-\phi\big(x(s)\big)\big|{\rm
d}\mu(s)\longrightarrow 0$. Hence $x_n\rightarrow x$ in measure.
\end{proof}

Sadly, in Example~\ref{Exlbr:1}, we cannot replace $\int_S\big|\phi\big(x_n(s)\big)-\phi\big(x(s)\big)\big|{\rm d}\mu(s)\longrightarrow 0$ by
$I_\phi(x_n)\longrightarrow I_\phi(x)$.  We use the following example to show that.

\begin{example}\label{Exlbr:2}
Let $\phi(x):=\begin{cases}
-\log x,
\,&\text{if}\,
x>0;\\
+\infty,&\text{otherwise}
\end{cases}, \forall x\in\RR$, and let $S,\mu$ be defined as in Example~\ref{Exlbr:1}. We define $x_n: S\rightarrow\RR$
(for every $n\in\NN$) by
$x_n(s):=\begin{cases}
n,
\,&\text{if}\,
s\in\left[0,\tfrac{1}{1+\log n}\right];\\
1,&\text{otherwise}
\end{cases}, \forall s\in S$.
Set $x(s):=\exp(1), \forall s\in S$.

Then $(x_n)_{n\in\NN}$ and $x$ are in
$\dom I_\phi$, $x\in L_\RR^{\infty}(S,\mu)$ and $I_\phi(x_n)\longrightarrow I_\phi(x)=-1$ but $(x_n)_{n\in\NN}$ does not converge to $x$ in measure.
\end{example}

\begin{proof}
Clearly, $x\in L^1_\RR(S,\mu)\cap  L^{\infty}_\RR(S,\mu)$. Now we show $(x_n)_{n\in\NN}$ is in
$L^1_\RR(S,\mu)$.  Fix $n\in\NN$. Then $x_n$ is a measurable function. We have
\begin{align*}
\int_S |x_n(s)| {\rm d}\mu(s)&=\int_S x_n(s) {\rm d}\mu(s)=
\int_{\left[0,\tfrac{1}{1+\log n}\right]} x_n(s) {\rm d}\mu(s)
+\int_{\left]\tfrac{1}{1+\log n}, 1\right]} x_n(s) {\rm d}\mu(s)\nonumber\\
&=\int_{\left[0,\tfrac{1}{1+\log n}\right]} n {\rm d}\mu(s)
+\int_{\left]\tfrac{1}{1+\log n}, 1\right]} 1{\rm d}\mu(s)\nonumber\\
&\leq\tfrac{n}{1+\log n}+1<+\infty\nonumber.
\end{align*}
Thus, $x_n\in L^1_\RR(S,\mu)$.

Now we show that $I_\phi(x_n)\longrightarrow I_\phi(x)$.
Clearly, $I_\phi(x)=\int_S -\log\Big(\exp(1)\big){\rm d}\mu(s)=-1$.
\begin{align*}
I_\phi(x_n)&=\int_S \phi\big(x_n(s)\big){\rm d}\mu(s)\\
&=\int_{\left[0,\tfrac{1}{1+\log n}\right]} \phi\big(x_n(s)\big){\rm
d}\mu(s)
+\int_{\left]\tfrac{1}{1+\log n},1\right]} \phi\big(x_n(s)\big){\rm d}\mu(s)\\
&=\int_{\left[0,\tfrac{1}{1+\log n}\right]} -\log n  {\rm d}\mu(s)
+\int_{\left]\tfrac{1}{1+\log n},1\right]} -\log 1 {\rm d}\mu(s)\\
&=-\tfrac{\log n}{1+\log n}\longrightarrow -1=I_\phi(x).
\end{align*}
Hence $I_\phi(x_n)\longrightarrow I_\phi(x)$.

On the other hand,
\begin{align*}
&\mu\big\{s\in S\mid |x_n(s)-x(s)|\geq 1\big\}=
\mu\big\{s\in S\mid |x_n(s)-\exp(1)|\geq1\big\}\\
&\geq \mu\big\{\left]\tfrac{1}{1+\log n}, 1\right]\big\}
=1-\tfrac{1}{1+\log n}\geq\tfrac{1}{2},\quad \forall n\geq3.
\end{align*}
Hence
$(x_n)_{n\in\NN}$ does not converge to $x$ in measure.
\end{proof}

The converse of Example~\ref{Exlbr:1} cannot hold either.

\begin{example}\label{Exlbr:3}
Let $\phi, S,\mu$ and $(x_n)_{n\in\NN}$ be all defined as in Example~\ref{Exlbr:2}.
Let $x(s):=1, \forall s\in S$. Then $(x_n)_{n\in\NN}$ and $x$ are in
$\dom I_\phi$, $x\in L_\RR^{\infty}(S,\mu)$ and $x_n\rightarrow x$ in measure
but $\int_S\big|\phi\big(x_n(s)\big)-\phi\big(x(s)\big)\big|{\rm
d}\mu(s)\nrightarrow 0$.
\end{example}
\begin{proof}
Example~\ref{Exlbr:2} shows that $(x_n)_{n\in\NN}$ is in $\dom I_\phi$.

Clearly, $x\in\dom I_\phi$ and $x\in L_\RR^{\infty}(S,\mu)$. Now we show that $x_n\rightarrow x$ in measure.
Let $\eta>0$. Then we have
\begin{align*}
\mu\big\{s\in S\mid |x_n(s)-x(s)|\geq\eta\big\}&=
\mu\big\{s\in S\mid |x_n(s)-1|\geq\eta\big\}
\leq \mu\big\{\left[0,\tfrac{1}{1+\log n}\right]\big\}
=\tfrac{1}{1+\log n}\longrightarrow 0.
\end{align*}
Hence $x_n\rightarrow x$ in measure.

We have
\begin{align*}
&\lim\int_S\big|\phi\big(x_n(s)\big)-\phi\big(x(s)\big)\big|{\rm
d}\mu(s)
=\lim\int_S\big|\phi\big(x_n(s)\big)\big)\big|{\rm
d}\mu(s)\\
&=\lim\int_S-\phi\big(x_n(s)\big)\big){\rm
d}\mu(s)=\lim-I_\phi(x_n)=1\neq0\quad\text{( by Example~\ref{Exlbr:2})}.
\end{align*}
Hence $\int_S\big|\phi\big(x_n(s)\big)-\phi\big(x(s)\big)\big|{\rm
d}\mu(s)\nrightarrow 0$.
\end{proof}

Let $\phi:E\rightarrow\RX$ be proper lower semicontinuous and strictly convex. Let $(x_n)_{n\in\NN}$ and $x$ be in $\dom I_\phi$. Assume that $ x\in\argmin I_\phi$.
The results so far given provoke the following question:
\begin{quote}
If $x_n\longrightarrow x$ in measure,  is it necessarily true that
$I_\phi (x_n)\longrightarrow I_\phi (x)$?
\end{quote}

The following example shows that  above statement cannot be true without imposing extra conditions.

\begin{example}[\textbf{Incompatibility of measure and value convergence}]
Let
\\
 $\phi(x):=\begin{cases}
-\log x  +x,
\,&\text{if}\,
x>0;\\
+\infty,&\text{otherwise}
\end{cases}, \forall x\in\RR$. Let $S=\left[0,1\right]$  and  let $\mu$ be Lebesgue measure.
Set (for every $n\in\NN$)
$x_n(s):=\begin{cases}
n,
\,&\text{if}\,
s\in\left[0,\tfrac{1}{n}\right];\\
1,&\text{otherwise}
\end{cases}, \forall s\in S$.  Then $\phi$ is proper lower semicontinuous and strictly
convex.
Let $x:S\rightarrow\RR$  be given by $x(s):=1,\forall s\in S$.

Then $(x_n)_{n\in\NN}$ and  $x$ are in $L^1_\RR(S,\mu)$, \[\argmin I_\phi=\{x\}~\mbox{ and } 
x_n\rightarrow x~\mbox{in measure}\] but $I_\phi(x_n)\nrightarrow I_\phi(x)$. In particular,
$(x_n)_{n\in\NN}$ does not converge weakly to $x$.
\end{example}

\begin{proof}
Clearly, $x\in L^1_\RR(S,\mu)$. First we show $(x_n)_{n\in\NN}$ is in
$L^1_\RR(S,\mu)$.  Let $n\in\NN$. Then $x_n$ is a measurable function. We have
\begin{align}
\int_S |x_n(s)| {\rm d}\mu(s)&=\int_S x_n(s) {\rm d}\mu(s)=
\int_{\left[0,\tfrac{1}{n}\right]} x_n(s) {\rm d}\mu(s)
+\int_{\left]\tfrac{1}{n}, 1\right]} x_n(s) {\rm d}\mu(s)\nonumber\\
&=\int_{\left[0,\tfrac{1}{n}\right]} n {\rm d}\mu(s)
+\int_{\left]\tfrac{1}{n}, 1\right]} 1{\rm d}\mu(s)\nonumber\\
&=1+(1-\tfrac{1}{n})\leq1+1=2.\label{normwke:1}
\end{align}
Thus, $x_n\in L^1_\RR(S,\mu)$.

Since $\argmin\phi=\{1\}$, $I_\phi(x)=\int_S \phi(1){\rm
d}\mu(s)\leq \int_S \phi\big(z(s)\big){\rm d}\mu(s)
=I_\phi(z),\forall z\in L^1_\RR(S,\mu)$. Then $x\in\argmin I_\phi$.
By Fact~\ref{legenFa:2}, $I_\phi$  has unique minimizer and hence
$\argmin I_\phi=\{x\}$.

Now we show that $x_n\rightarrow x$ in measure.
Let $\eta>0$. Then we have
\begin{align*}
\mu\big\{s\in S\mid |x_n(s)-x(s)|\geq\eta\big\}&=
\mu\big\{s\in S\mid |x_n(s)-1|\geq\eta\big\}
\leq \mu\big\{\left[0,\tfrac{1}{n}\right]\big\}
=\tfrac{1}{n}\longrightarrow 0.
\end{align*}
Hence
$\lim\mu\big\{s\in S\mid |x_n(s)-x(s)|\geq\eta\big\}=0$ and thus
$x_n\rightarrow x$ in measure.

By \eqref{normwke:1}, $\|x_n\|_1\nrightarrow\|x\|_1=1$. Then $x_n\nrightarrow x$. Since $x_n\rightarrow x$ in measure, \cite[Lemma~2]{Vin} implies that $(x_n)_{n\in\NN}$ does not converge weakly to $x$.

We claim that $I_\phi(x_n)\nrightarrow I_\phi(x)$. We have
\begin{align*}
I_\phi(x_n)&=\int_S \phi\big(x_n(s)\big){\rm d}\mu(s)\\
&=\int_{\left[0,\tfrac{1}{n}\right]} \phi\big(x_n(s)\big){\rm
d}\mu(s)
+\int_{\left]\tfrac{1}{n},1\right]} \phi\big(x_n(s)\big){\rm d}\mu(s)\\
&=\int_{\left[0,\tfrac{1}{n}\right]} -\log n +n {\rm d}\mu(s)
+\int_{\left]\tfrac{1}{n},1\right]} -\log 1  +1{\rm d}\mu(s)\\
&=-\tfrac{\log n}{n}+1 +(1-\tfrac{1}{n})\longrightarrow 2.
\end{align*}
However,
\begin{align*}
I_\phi(x)&=\int_S \phi\big(x(s)\big) {\rm d}\mu(s)= \int_S -\log 1
+1 {\rm d}\mu(s)=1.
\end{align*}
Combining the results above, $I_\phi(x_n)\nrightarrow I_\phi(x)$.
\end{proof}

\begin{remark}
Let $(C_n)_{n\in\NN}$ and $C_{\infty}$ in $L^{1}_E(S,\mu)$ be  closed convex sets, and let  $\phi:E\rightarrow\RX$ be  proper lower semicontinuous and convex.  When, as in  \cite{BorLew1}, we consider the following sequences of optimization problems
\begin{align*}
(P_n)&\qquad \qquad\qquad\qquad V(P_n):=\inf\big\{I_\phi(x)\mid x\in C_n\big\},\\
(P_{\infty})&\qquad\qquad \qquad\qquad V(P_{\infty}):=\inf\big\{I_\phi(x)\mid x\in C_{\infty}\big\},
\end{align*}
the above results indicate that one cannot significantly weaken the conditions of Fact~\ref{BLal:4a} (such as, replacing weak convergence by measure convergence).
\end{remark}

To conclude, we observe that the examples of this section indicate the limited use of convergence in measure in the absence of weak compactness conditions.

\section*{Acknowledgments} The authors thank Dr.~Jon Vanderwerff for helpful discussions.
Jonathan  Borwein and Liangjin Yao both were  partially supported by various Australian Research  Council grants.


\end{document}